\newtheorem{theorem}{Theorem}
\newtheorem{corollary}[theorem]{Corollary}
\newtheorem{definition}[theorem]{Definition}
\newtheorem{lemma}[theorem]{Lemma}
\newtheorem{proposition}[theorem]{Proposition}
\newtheorem{remark}[theorem]{Remark}
\newenvironment{proof}[1][Proof]{\noindent\textbf{#1.} }{\ \rule{0.5em}{0.5em}}
\newcommand{\ud}{\,\mathrm{d}}
\newcommand{\p}{\ensuremath{\partial}}
\newcommand{\n}{\ensuremath{\nonumber}}
\newcommand{\eps}{\ensuremath{\varepsilon}}
\newcommand{\bigO}{\mathcal{O}}
\DeclareSymbolFont{rmlargesymbols}{OMX}{mdbch}{m}{n}
\DeclareMathSymbol{\rmintop}{\mathop}{rmlargesymbols}{82}
\title{\vspace{-50pt} Steady Prandtl Layer Expansions with External Forcing}
\author{ \Large Yan Guo\footnote{\url{yan_guo@brown.edu}. Division of Applied Mathematics, Brown University, 182 George Street, Providence, RI 02912, USA.} \hspace{10 mm} Sameer Iyer \footnote{\url{ssiyer@math.princeton.edu}. Department of Mathematics, Princeton University, Fine Hall, Washington Road, Princeton, NJ 08540, USA.}}
\date{September 6, 2018}
\begin{document}

\maketitle

\begin{abstract}
In this article we apply the machinery developed in \cite{GI1} together with a new compactness estimate and an object called the degree in order to prove validity of steady Prandtl layer expansions with external forcing.  
\end{abstract}

\section{Introduction and Notation}

We consider the steady, incompressible Navier-Stokes equations on the two-dimensional domain, $(x,Y) \in \Omega = (0,L) \times (0, \infty)$. Denoting the velocity $\bold{U}^{NS} := (U^{NS}, V^{NS})$, the equations read: 
\begin{align}  \label{intro.NS}
\left.
\begin{aligned}
&\bold{U}^{NS} \cdot \nabla \bold{U}^{NS} + \nabla P^{NS} = \eps \Delta \bold{U}^{NS} + \bold{g}_{ext} \\
&\nabla \cdot \bold{U}^{NS} = 0 
\end{aligned}
\right\} \text{ in } \Omega
\end{align}

The system above is taken with the no-slip boundary condition on $\{Y = 0\}$: 
\begin{align} \label{noslip.BC}
[U^{NS}, V^{NS}]|_{Y= 0} = [0, 0]. 
\end{align}

Here, the function $\bold{g}_{ext} = (g^{(u)}_{ext}, g^{(v)}_{ext})$ is an external force which vanishes in the inviscid limit. The form of the forcing we treat is given in (\ref{form.force}). 

In this article, we fix an outer Euler shear flow of the form $[u^0_e(Y), 0, 0]$, (satisfying certain assumptions given in (\ref{charlie.1}) - (\ref{charlie.3})). A fundamental question is to describe the asymptotic behavior of solutions to (\ref{intro.NS}) as the viscosity vanishes, that is as $\eps \rightarrow 0$. Generically, there is a mismatch of the tangential velocity at the boundary $\{Y = 0\}$ of the viscous flows, (\ref{noslip.BC}), and inviscid flows. Thus, one cannot expect $[U^{NS}, V^{NS}] \rightarrow [u^0_e, 0]$ in a sufficiently strong norm (for instance, $L^\infty$).  

To rectify this mismatch, it was proposed in 1904 by Ludwig Prandtl that there exists a thin fluid layer of size $\sqrt{\eps}$ near the boundary $Y = 0$ that bridges the velocity of $U^{NS}|_{Y = 0} = 0$ with the nonzero Eulerian velocity. This layer is known as the Prandtl boundary layer. 

We work with the scaled boundary layer variable:
\begin{align} \label{BL.variable}
y = \frac{Y}{\sqrt{\eps}},
\end{align}

Consider the scaled Navier-Stokes velocities: 
\begin{align}
U^\eps(x,y) = U^{NS}(x,Y), \hspace{3 mm} V^\eps = \frac{V^{NS}(x,Y)}{\sqrt{\eps}}, \hspace{3 mm} P^\eps(x,y) = P^{NS}(x,Y). 
\end{align}

Equation (\ref{intro.NS}) now becomes: 
\begin{align} \label{scaledNSint}
\begin{aligned}
&U^\eps  U^\eps_x + V^\eps U^\eps_y + P^\eps_x = \Delta_\eps U^\eps +  g^{(u)}_{ext} \\
&U^\eps V^\eps_x + V^\eps V^\eps_y + \frac{P^\eps_y}{\eps} = \Delta_\eps V^\eps + \frac{g^{(v)}_{ext}}{\sqrt{\eps}}\\
&U^\eps_x + V^\eps_y = 0
\end{aligned}
\end{align}

We expand the solution in $\eps$ as:
\begin{align}
\begin{aligned} \label{exp.u}
&U^\eps = u^0_e + u^0_p + \sum_{i = 1}^n \sqrt{\eps}^i (u^i_e + u^i_p) + \eps^{p} u^{(\eps)} := u_s + \eps^{N_0} u^{(\eps)}, \\
&V^\eps = v^0_p + v^1_e + \sum_{i = 1}^{n-1} \sqrt{\eps}^i (v^i_p + v^{i+1}_e) + \sqrt{\eps}^n v^n_p + \eps^{N_0} v^{(\eps)} := v_s + \eps^{N_0} v^{(\eps)}, \\
&P^\eps = P^0_e + P^0_p + \sum_{i = 1}^n \sqrt{\eps}^i (P^i_e + P^i_p) + \eps^{p} P^{(\eps)} := P_s + \eps^{N_0} P^{(\eps)},
\end{aligned}
\end{align}

\noindent where the coefficients are independent of $\eps$. Here $[u^i_e, v^i_e]$ are Euler correctors, and $[u^i_p, v^i_p]$ are Prandtl correctors. These are constructed in the Appendix, culminating in Theorem \ref{thm.construct}.

Correspondingly, we expand the forcing into $\bold{g}_{ext} :=  \begin{pmatrix} g^{(u)}_{ext} \\ g^{(v)}_{ext}\end{pmatrix}$, which is given by \footnote{Our result also applies if we add a leading order term $g^{v,0}_{ext, e} = -1$, which accounts for gravity.} 
\begin{align} \label{form.force}
\begin{pmatrix} \sum_{i = 1}^n \sqrt{\eps}^i (g^{u,i}_{ext, e} + g^{u,i}_{ext,p}) + \eps^{N_0} g^{u,(\eps)}_{ext} \\ \sqrt{\eps} [g^{v,1}_{ext, e} + \sum_{i = 1}^{n-1} \sqrt{\eps}^i (g^{v,i}_{ext, p} + g^{v,i+1}_{ext, e}) + \sqrt{\eps}^n g^{v,n}_{ext, p} + \eps^{N_0} g^{v,\eps}_{ext}]  \end{pmatrix}
\end{align}

\noindent The main assumption on the forcing applies to $g^{u,1}_{ext, p}$, and is given in (\ref{charlie.3}).

For our analysis, we will take $n = 4$ and $p = \frac{3}{2}+$. Let us also introduce the following notation: 
\begin{align}
\begin{aligned} \label{intro.bar.prof}
 \bar{u}^i_p := u^i_p - u^i_p|_{y = 0}, \hspace{3 mm} \bar{v}^i_p := v^i_p - v^1_p|_{y = 0}, \hspace{3 mm} \bar{v}^i_e := v^i_e - v^i_e|_{Y = 0}.
\end{aligned}
\end{align}

The profile $\bar{u}^0_p, \bar{v}^0_p$ from (\ref{intro.bar.prof}) is classically known as the ``boundary layer"; one sees from (\ref{exp.u}) that it is the leading order approximation to the Navier-Stokes velocity, $U^\eps$. The final layer, 
\begin{align*}
[u^{(\eps)}, v^{(\eps)}, P^{(\eps)}] = [\bold{u}^\eps, P^\eps].
\end{align*}

\noindent are called the ``remainders" and importantly, they depend on $\eps$. Controlling the remainders uniformly in $\eps$ is the fundamental challenge in order to establish the validity of (\ref{exp.u}), and the centerpiece of our article. 

We begin by briefly discussing the approximations, $[u_s, v_s]$. The particular equations satisfied by each term in $[u_s, v_s]$ is recorded in Appendix \ref{appendix.derive}. We record Theorem \ref{thm.construct}, which is proven in companion paper \cite{GI1}. We are prescribed the shear Euler flow, $u^0_e$. The profiles $[u^i_p, v^i_p]$ are Prandtl boundary layers. Importantly, these layers are rapidly decaying functions of the boundary layer variable, $y$. At the leading order, $[u^0_p, v^0_p]$ solve the nonlinear Prandtl equation: 
\begin{align}
\begin{aligned} \label{Pr.leading.intro}
&\bar{u}^0_p u^0_{px} + \bar{v}^0_p u^0_{py} - u^0_{pyy} + P^0_{px} = 0, \\
&u^0_{px} + v^0_{py} = 0, \hspace{3 mm} P^0_{py} = 0, \hspace{3 mm} u^0_p|_{x = 0} = U^0_P, \hspace{3 mm} u^0_p|_{y = 0} = - u^0_e|_{Y = 0}.
\end{aligned}
\end{align}

Soon after Prandtl's seminal 1904 paper, Blasius discovered the celebrated self-similar solution to (\ref{Pr.leading.intro}) (with zero pressure). This solution reads 
\begin{align} \label{blasius}
[\bar{u}^0_p, \bar{v}^0_p] = \Big[f'(\eta), \frac{1}{\sqrt{x + x_0}}\{ \eta f'(\eta) - f(\eta) \} \Big], \text{ where } \eta = \frac{y}{\sqrt{x+x_0}},
\end{align}

\noindent where $f$ satisfies
\begin{align} \label{blasius.ODE}
ff'' + f''' = 0, \hspace{3 mm} f'(0) = 0, \hspace{2 mm} f'(\infty) = 1, \hspace{2 mm} \frac{f(\eta)}{\eta} \xrightarrow{n \rightarrow \infty} 1.
\end{align}

\noindent Here, $x_0 \ge 0$ is a free parameter. It is well known that $f''(\eta)$ has a Gaussian tail, and that the following hold: 
\begin{align*}
0 \le f' \le 1, \hspace{3 mm} f''(\eta) \ge 0, \hspace{3 mm} f''(0) > 0, \hspace{3 mm} f'''(\eta) < 0 . 
\end{align*}

\noindent Moreover, the normal velocity satisfies the asymptotics:
\begin{align} \label{sign.blasius}
\lim_{\eta \rightarrow \infty} \frac{1}{\sqrt{x+x_0}} \{ \eta f'(\eta) - f(\eta) \} = v^1_e|_{Y = 0}  > 0  
\end{align}

Such a Blasius profile has been confirmed by experiments with remarkable accuracy as the main validation of the Prandtl theory (see \cite{Schlicting} for instance). These profiles are also canonical from a mathematical standpoint in the following sense: the work, \cite{Serrin}, has proven that when $x$ gets large (downstream), solutions to the Prandtl equation, (\ref{Pr.leading.intro}), converge to an appropriately renormalized Blasius profile. Therefore, validating the expansions (\ref{exp.u}) for the Blasius profile is the main objective and motivation in our study. 

Our main assumptions are imposed on the prescribed $\{x = 0\}$ data for the first order Euler approximation for the normal velocity and on the leading order shear flow. Denote by $u_{\parallel} = \bar{u}^0_p|_{x = 0}$, and $v_{\parallel} = \bar{v}^0_p|_{x = 0}$. To state our assumptions we need to define the notion of ``degree":
\begin{definition} \normalfont The degree, $d$, of a function $f(y)$ is
\begin{align} \label{defn.boldd}
\bold{d}(f) := \int_0^\infty K(y) I_y[f] \ud y, \hspace{3 mm} K(y) := u_{\parallel} e^{-\int_1^y v_{\parallel}}, \hspace{3 mm} I_y[f] := - \int_y^\infty f(z) \ud z. 
\end{align}

\end{definition}

We now delineate our main assumptions: 
\begin{subequations}
\begin{align}  \label{charlie.1}
&v^1_e|_{x = 0} > 0 \text{ for } Y \ge 0,  \\ \label{charlie.2}
&\| u^0_{eYY} \langle Y \rangle^2 \|_\infty = o(1), \\ \label{charlie.3}
&|\int_0^\infty K(y) g^{u,1}_{ext, p}(y) \ud y| \gtrsim 1.   
\end{align}
\end{subequations}

\noindent We will refer to (\ref{charlie.3}) as the ``non-degeneracy condition". As we will point out below, these assumptions are certainly consistent with the Blasius layer, (\ref{blasius}), and include a large class of shear flows for $u^0_e$ (see Subsection \ref{section.euler.example}).

We will also require the following finiteness assumptions on the Euler layers:
\begin{align}
\begin{aligned} \label{assume.Euler.i}
&v^1_e|_{x = 0} \text{ decays either algebraically or exponentially as } Y \uparrow \infty, \\
&\sup_Y |\frac{v^i_e|_{x = 0}}{v^1_e|_{x = 0}}| < \infty.
\end{aligned}
\end{align}

The system satisfied by the remainders, $[u^{(\eps)},v^{(\eps)},P^{(\eps)}]$, in vorticity formulation gives: 
\begin{align} 
\begin{aligned} \label{eqn.vort.intro}
-R[q^{(\eps)}] - u^{(\eps)}_{yyy} &+ 2\eps v^{(\eps)}_{xyy} + \eps^2 v^{(\eps)}_{xxx} + v_s \Delta_\eps u^{(\eps)} - u^{(\eps)} \Delta_\eps v_s \\
& = \eps^{N_0} \{u^{(\eps)} \Delta_\eps v^{(\eps)} - v^{(\eps)} \Delta_\eps v^{(\eps)}\} + g,
\end{aligned}
\end{align}

\noindent Here, $\Delta_\eps := \p_{yy} + \eps \p_{xx}$, $g$ is a forcing term that we will not discuss further in the introduction, and where we have defined the Rayleigh operator:
\begin{align} \label{rayleigh.quotient}
R[q^{(\eps)}] = \p_y\{ u_s^2 \p_y q^{(\eps)}\} + \eps \p_x \{ u_s^2 q^{(\eps)}_x \}, \hspace{3 mm} q^{(\eps)} := \frac{v^{(\eps)}}{u_s}.
\end{align}

\noindent The boundary condition we take are the following: 
\begin{align}
\begin{aligned} \label{rene}
&v^{(\eps)}|_{x = 0} = a^\eps_0(y), v^{(\eps)}_x|_{x = L} = a^\eps_1(y), v^{(\eps)}_{xx}|_{x = 0} = a^\eps_2(y), v^{(\eps)}_{xxx} = a^\eps_3(y) \\
&v^{(\eps)}|_{y = 0} = v^{(\eps)}_y|_{y = 0} = u^{(\eps)}|_{y = 0} = 0, v^{(\eps)}|_{y \uparrow \infty} = 0, 
\end{aligned}
\end{align}

\noindent Here, the $a^\eps_i(y)$ are prescribed boundary data which we assume satisfy: 
\begin{align} \label{assume.bq.intro}
\| \p_y^{j} a^\eps_i \{ \frac{1}{\sqrt{\eps}} \langle y \rangle^{\frac{1}{2}+} + \frac{\langle y \rangle}{v^1_e|_{x = 0}} \} \| \le o(1) \text{ for } j = 0,...,4,
\end{align}

\noindent which is a quantitative statement that the expansion (\ref{exp.u}) is valid at $\{x = 0\}$ and $\{x = L\}$. 

We are now able to state our main result, so long as we remain vague regarding the norm $\| \cdot \|_{\mathcal{X}}$ that appears below. A discussion of this norm will be in Subsection \ref{subsection.Main}.

\begin{theorem}[Main Theorem] \label{thm.main}  Assume boundary data and forcing are prescribed as in  Theorem \ref{thm.construct} and satisfying the assumptions (\ref{charlie.1}) - (\ref{charlie.3}), (\ref{assume.Euler.i}), and (\ref{assume.bq.intro}). Let $0 < \eps << L << 1$. Then all terms in the expansion (\ref{exp.u}) exist and are regular, $\| u_s, v_s \|_\infty \lesssim 1$. The remainders, $[u^{(\eps)},v^{(\eps})]$ exists uniquely in the space $\mathcal{X}$ and satisfy: 
\begin{align} \label{unif.x}
\| \bold{u}^\eps \|_{\mathcal{X}} \lesssim 1.
\end{align}
\noindent The Navier-Stokes solutions satisfy:
\begin{align} \label{main.inviscid}  
\| U^{NS} - u^0_e - u^0_p \|_\infty \lesssim \sqrt{\eps} \text{ and } \| V^{NS} - \sqrt{\eps} v^0_p - \sqrt{\eps}v^1_e \|_\infty \lesssim \eps.
\end{align}
\end{theorem}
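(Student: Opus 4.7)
The plan is to establish Theorem \ref{thm.main} via a three-stage program: first invoke Theorem \ref{thm.construct} to construct the approximate profiles $[u_s,v_s]$; then solve the remainder system (\ref{eqn.vort.intro})–(\ref{rene}) by a contraction argument; and finally read off (\ref{main.inviscid}) from the $\mathcal{X}$-estimate combined with the form of (\ref{exp.u}). Because the nonlinear terms in (\ref{eqn.vort.intro}) carry an explicit prefactor $\eps^{N_0}$ with $N_0 = \tfrac{3}{2}+$, the whole construction reduces to a single a priori estimate $\|\bold{u}^\eps\|_{\mathcal{X}} \lesssim \|F\|_{\mathcal{X}^\ast}$ for the linearized operator
\begin{equation*}
\mathcal{L}(u,v) := -R[q] - u_{yyy} + 2\eps v_{xyy} + \eps^2 v_{xxx} + v_s \Delta_\eps u - u\Delta_\eps v_s,
\qquad q = \tfrac{v}{u_s},
\end{equation*}
acting on functions satisfying the boundary conditions (\ref{rene}); the contribution of $F$ will include the residual from inserting $[u_s,v_s]$ into Navier--Stokes, which is of size $\eps^{N_0}$ by construction.

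For the linear estimate I would proceed in three ingredients. First, test (\ref{eqn.vort.intro}) against $q = v/u_s$: the principal part $R[q]$ produces the coercive quantity $\int u_s^2|\nabla_\eps q|^2$, while the lower-order commutators involving $u_s$-derivatives are absorbed via Hardy inequalities in the boundary-layer variable. This yields control of a weighted $H^1$-type norm of $q$, modulo the difficult term $\int u\,\Delta_\eps v_s \cdot q$ that the quadratic form cannot dominate. Second, I would use the new \emph{compactness estimate} alluded to in the abstract: treating this loss term as a compact perturbation in a space with a slightly weaker weight, and upgrading the coercive information on $q$ to control of $u = \p_x\psi$ (via the stream function relation and the boundary conditions $v=v_y=0$ on $y=0$), a Fredholm-alternative type argument reduces the a priori bound to an injectivity statement for $\mathcal{L}$. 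Third, this injectivity is exactly what the \emph{degree} $\bold{d}$ of Definition (\ref{defn.boldd}) captures: the near-kernel element of $\mathcal{L}$ is, up to harmless errors, a pure $x$-independent mode driven by the leading Prandtl-profile pairing $\int_0^\infty K(y)\, g^{u,1}_{ext,p}\,\ud y$, and the non-degeneracy condition (\ref{charlie.3}) forces any prospective null element to pair nontrivially against this mode, hence to vanish.

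Once $\|\cdot\|_{\mathcal{X}} \lesssim \|\cdot\|_{\mathcal{X}^\ast}$ is established at the linear level, existence for the linear problem follows by a standard Galerkin/continuity method using the conditions (\ref{assume.bq.intro}) to absorb the boundary data. The nonlinear system is then solved by a Banach fixed point on a ball of unit radius in $\mathcal{X}$: the $\eps^{N_0}$ prefactor and $\eps \ll L \ll 1$ make the nonlinear map a strict contraction, which gives the uniform bound (\ref{unif.x}). Finally, since $\mathcal{X}$ embeds into $L^\infty$, the expansion (\ref{exp.u}) truncated at its leading two entries combined with $\|u^i_p,u^i_e\|_\infty\lesssim 1$ from Theorem \ref{thm.construct} yields
\begin{equation*}
\|U^{NS}-u^0_e-u^0_p\|_\infty \le \sqrt{\eps}\bigl(\|u^1_e\|_\infty+\|u^1_p\|_\infty\bigr) + \eps^{N_0}\|u^{(\eps)}\|_\infty \lesssim \sqrt{\eps},
\end{equation*}
and analogously for $V^{NS}$ after using $V^{NS}=\sqrt{\eps}\,V^\eps$.

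The clearly hardest step is the injectivity piece driven by the degree. Energy methods alone fail to close because $u\,\Delta_\eps v_s$ is not sign-definite and has no small prefactor; the compactness estimate reduces the problem to finite-dimensional uniqueness, but that step is genuinely nonstandard, as it requires identifying the exact structure of the near-kernel, proving that it is one-dimensional, writing its generator in closed form in terms of the Blasius kernel $K$, and showing that pairing the remainder equation against this generator reproduces the functional $\bold{d}(g^{u,1}_{ext,p})$ assumed nonzero in (\ref{charlie.3}). Matching the weights so that the degree pairing is well-defined while simultaneously controlling the compact perturbation without losing powers of $\eps$ is what dictates the choice of the norm $\mathcal{X}$.
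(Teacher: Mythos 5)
There is a genuine gap, and it sits at the very first ingredient of your linear estimate. You propose to test the undifferentiated vorticity equation (\ref{eqn.vort.intro}) against $q = v/u_s$ and extract coercivity of $R[q]$ in the form $\int u_s^2|\nabla_\eps q|^2$, then treat $u\,\Delta_\eps v_s$ as a compact perturbation. This is the strategy of the moving-boundary case: there $\bar u|_{y=0}>0$, the weight $u_s^2$ is nondegenerate, and $q\equiv 1\notin \mathrm{Ker}(R)$. In the present no-slip, motionless-boundary setting $u_s$ vanishes at $y=0$, so $q=v/u_s$ is singular at the boundary, the quadratic form degenerates there, and — as the paper states explicitly in the overview — this procedure interacts too badly with the $u_{yyy}$ term to close; extracting coercivity from $R[q]$ here has not been made to work. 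Consequently the reduction you describe (coercive $H^1_\eps$ control of $q$ modulo a compact loss, then Fredholm plus an injectivity statement detected by the degree) does not get off the ground, and no amount of adjusting weights in $\mathcal{X}$ repairs the boundary degeneracy of the quadratic form.

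What the paper actually does is structurally different and is the missing idea in your proposal: it splits the problem into two \emph{linked} systems. One takes $\p_x$ of the vorticity equation (the DNS system (\ref{intro.v.sys})), where the leading operators $-\p_x R[q]$ and $v_{yyyy}$ interact favorably, and this controls $v$ in terms of the boundary trace $u^0 := u|_{x=0}$ (whose influence enters through $F_{u^0}$ since $u = u^0-\int_0^x v_y$). The other evaluates the vorticity equation at $\{x=0\}$, giving the ODE system (\ref{sys.u0.intro}) for $u^0$ with forcing depending on traces of $v$. Your degree/compactness intuition is applied in the paper only to this one-dimensional boundary operator: one decomposes $\mathcal{L}=\mathcal{L}_\parallel+\sqrt{\eps}A+J$, observes $u_\parallel=\bar u^0_p|_{x=0}\in\mathrm{Ker}(\mathcal{L}_\parallel)$ by the Prandtl equation, writes $u^0=u_\perp+\kappa u_\parallel$, proves coercivity of $\mathcal{L}_\parallel$ on $u_\perp$ via the compactness lemma, controls $\kappa$ through $\bold{d}(S(u_\parallel))$ and the non-degeneracy condition (\ref{charlie.3}), and handles the order-one term $\bar v^1_e u^0_{yy}$ by a separate multiscale estimate using $v^1_e|_{x=0}>0$. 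The theorem then follows from closing the linked scheme of estimates (\ref{scheme.1}) with carefully tuned powers of $\eps$ and $L$, and only afterwards from the fixed-point and the $L^\infty$ embeddings (which lose $\eps^{-1/2}$ and $\eps^{-(1+)/2}$, compensated by the $\eps^{N_0}$ prefactor — your claim that $\mathcal{X}$ embeds into $L^\infty$ without loss is also not accurate). Without the DNS/boundary-trace splitting and the resulting coupled estimate scheme, your outline cannot be completed as written.
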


Upon establishing the uniform bound (\ref{unif.x}), the result (\ref{main.inviscid}) follows from the following inequalities: $\| v \|_\infty \lesssim  \eps^{-\frac{1}{2}} \| \bold{u} \|_{\mathcal{X}}$, and $\|u \|_{\infty} \lesssim  \eps^{-\frac{(1+)}{2}} \|\bold{u} \|_{\mathcal{X}}$. These are established in Lemmas \ref{lemma.unif.u} and \ref{lemma.nonlinear} together with the definitions in (\ref{defn.norms.ult}).
 
We also note that thanks to (\ref{sign.blasius}), the assumption (\ref{charlie.1}) is consistent with the Blasius profile.

\subsection{Notation}

Before we state the main ideas of the proof, we will discuss our notation. Since we use the $L^2$ norm extensively in the analysis, we use $\| \cdot \|$ to denote the $L^2$ norm. It will be clear from context whether we mean $L^2(\mathbb{R}_+)$ or $L^2(\Omega)$. When there is a potential confusion (for example, when changing coordinates), we will take care to specify with respect to which variable the $L^2$ norm is being taken (for instance, $L^2_y$ means with respect to $\ud y$, whereas $L^2_Y$ will mean with respect to $\ud Y$). Similarly, when there is potential confusion, we will distinguish $L^2$ norms along a one-dimensional surface (say $\{x = 0\}$) by $\| \cdot \|_{x = 0}$. Analogously, we will often use inner products $(\cdot, \cdot)$ to denote the $L^2$ inner product. When unspecified, it will be clear from context if we mean $L^2(\mathbb{R}_+)$ or $L^2(\Omega)$. When there is potential confusion, we will distinguish inner products on a one-dimensional surface (say $\{x = 0\}$) by writing $(\cdot, \cdot)_{x = 0}$. 

We will often use scaled differential operators: $\nabla_\eps := (\p_x, \sqrt{\eps}\p_y)$ and $\Delta_\eps := \p_{yy} + \eps \p_{xx}$. For functions $w: \mathbb{R}_+ \rightarrow \mathbb{R}$, we distinguish between $w'$ which means differentiation with respect to its argument versus $w_y$ which refers to differentiation with respect to $y$. 

Regarding unknowns, the central object of study in our paper are the remainders, $[u^{(\eps)}, v^{(\eps)}]$. By a standard homogenization argument (see subsection \ref{subsection.rem}), we may move the inhomogeneous boundary terms $a_i^{\eps}$ to the forcing and consider the homogeneous problem. We call the new unknowns $[u, v]$, and these are actually the objects we will analyze throughout the paper. 

When we write $a \lesssim b$, we mean there exists a number $C < \infty$ such that $a \le C b$, where $C$ is independent of small $L, \eps$ but could depend on $[u_s, v_s]$. We write $o_L(1)$ to refer to a constant that is bounded by some unspecified, perhaps small, power of $L$: that is, $a = o_L(1)$ if $|a| \le C L^\delta$ for some $\delta > 0$. 

We will, at various times, require localizations. All such localizations will be defined in terms of the following fixed $C^\infty$ cutoff function: 
\begin{align} \label{basic.cutoff}
\chi(y) := \begin{cases}1 \text{ on } y \in [0,1) \\ 0 \text{ on } y \in (2,\infty) \end{cases} \hspace{3 mm} \chi'(y) \le 0 \text{ for all } y > 0.  
\end{align}

We will use $\| \cdot \|_{loc}$ to mean localized $L^2$ norms. More specifically we take for concreteness $\| \cdot \|_{loc} := \| \cdot \chi(\frac{y}{10}) \|$.

We will define now the key norms that appear throughout our analysis:
\begin{definition} \label{defn.norms.intro} \normalfont Given a weight function $w = w(y)$, define:  
\begin{align} 
\begin{aligned} \label{defn.norms.ult}
&\| v \|_{X_w} := \eps^{-\frac{3}{16}}||||v||||_w + |||q|||_w, \\
&\| v \|_{Y_w} := ||||v||||_w + \sqrt{\eps}|||q|||_w, \\
&\| u^0 \|_{\Upsilon} := \| u^0_{yyy} \langle y \rangle \| + \| u^0_{ yy} \langle y \rangle \| + \| u^0_{ y} \| + \|u^0 \|_{loc}\\
&\| u^0 \|_{B} := \| u^0_\perp \|_{\Upsilon} +\eps^{\frac{1}{4}}|\kappa| + \|\eps^{\frac{1}{4}}u^0_{yy} \frac{\langle y \rangle}{\sqrt{v^1_e|_{x = 0}}} \|, \\
&\| u^0, v \|_{\mathcal{X}} := \|u^0 \|_B + \eps^{\frac{1}{4}} \| v \|_{X_1} + \eps^{\frac{1}{4}+}  \|v \|_{Y_1} + \eps^{\frac{1}{4}-} \| v \|_{Y_{w_0}}, \\
&|||q|||_w := \| \nabla_\eps q_x \cdot u_s w \| + \| \sqrt{u_s} \{ q_{yyy}, q_{xyy}, \sqrt{\eps} q_{xxy}, \eps q_{xxx} \} w \| + |q|_{\p, 2, w}\\
&||||v||||_w := \| \{\sqrt{\eps} v_{xyyy}, \eps v_{xxyy}, \eps^{\frac{3}{2}}v_{xxxy}, \eps^2 v_{xxxx} \} w \| + |v|_{\p, 3, w}\\
&| q |_{\p,2,w} :=  \|u_s q_{xy}w\|_{x = 0} + \|q_{xy} w\|_{y = 0} + \|\sqrt{\eps}u_s q_{xx}w\|_{x = L} + \|q_{yy}w\|_{y= 0} \\
&| v |_{\p, 3,w} := \|\eps^{\frac{3}{2}}\sqrt{u_s} v_{xxx} w\|_{x = 0} + \|\sqrt{\eps} u_s v_{xyy}w\|_{x = 0} + \|\eps u_s v_{xxy}w\|_{x = L}.
\end{aligned}
\end{align}
\end{definition}

\subsection{Overview of Proof} \label{subsection.Main}

Let us first recap the ideas introduced in \cite{GN}, which treated the case when the boundary $\{y = 0\}$ was moving with velocity $u_b > 0$. First, let us extract: 
\begin{align} \label{leading.1}
\text{Leading order operators in (\ref{eqn.vort.intro})} = - R[q] - u_{yyy}.
\end{align}

Due to the nonzero velocity at the $\{y = 0\}$ boundary, the quantity $\bar{u}|_{y = 0} > 0$. A central idea introduced by \cite{GN} is the coercivity of $R[q]$ over $\| \nabla_\eps q \|$.This coercivity relied on the fact that $q = \frac{v}{\bar{u}} = 1 \notin \text{Ker}(R)$, thanks to the non-zero boundary velocity of $\bar{u}|_{y = 0}$. Extensive efforts without success have been made to extracting coercivity from $R[q]$ in the present, motionless boundary, case. However, it appears that this procedure interacts poorly with the operator $\p_{yyy} u$, producing singularities too severe to handle.

\subsubsection*{\normalfont \textit{Part 0: The Central Objects}}

Our main idea is based on the observation that the $x$ derivative of (\ref{leading.1}) produces, at leading order:
\begin{align} \label{leading.2}
- \p_x R[q] + v_{yyyy}.
\end{align}

\noindent Unlike (\ref{leading.1}), these two operators enjoy better interaction properties. 

To this end, we split the equation (\ref{eqn.vort.intro}) into two pieces that are linked together. First, we take $\p_x$ of (\ref{eqn.vort.intro}) (call this ``DNS" for Derivative Navier-Stokes) to obtain: 
\begin{align}
\begin{aligned} \label{intro.v.sys}
&\text{DNS}(v) := - \p_x R[q] + \Delta_\eps^2 v +J(v) = - F_{u^0} + \eps^{N_0} \mathcal{N} + g_{(q)}, \\
&v|_{x = 0} = v_x|_{x = L} =  v_{xx}|_{x = 0} = v_{xxx} = 0. \\
&v|_{y = 0} = v|_{y = 0} = 0.  
\end{aligned}
\end{align}

\noindent Here, $F_{u^0}$ contains the $u^0$ dependencies, which arise through $u = u^0 - \int_0^x v_y$, and is defined as 
\begin{align} \label{U.def}
F_{u^0} := v_{sx} u^0_{yy} - u^0 \Delta_\eps v_{sx}.
\end{align}

\noindent $\mathcal{N}$ contain nonlinear terms and $g_{(q)}$ contains forcing terms, all of which are defined in (\ref{eqn.dif.1.app}). Note also the change in notation in (\ref{intro.v.sys}) as we have dropped the superscript $\eps$, and homogenized the boundary conditions on the sides $\{x = 0\}, \{x = L\}$. 

The second piece is to study the boundary trace, $u^0 = u|_{x = 0}$. By evaluating the vorticity equation (\ref{eqn.vort.intro}) at $\{x = 0\}$, we obtain the following system for $u^0$:
\begin{align}
\begin{aligned} \label{sys.u0.intro}
&\mathcal{L} u^0 := - u^0_{yyy} + v_s u^0_{yy} - u^0 \Delta_\eps v_s = F_{(v)} + g_{(u)}, \\
&F_{(v)} := -2\eps u_s u_{sx}q_x|_{x = 0} - 2\eps v_{xyy}|_{x = 0} - \eps^2 v_{xxx}|_{x = 0} + \eps v_s v_{xy}|_{x = 0}, \\
& u^0(0) = 0, u^0_y(\infty) = 0, u^0_{yy}(\infty) = 0.
\end{aligned}
\end{align}

\noindent As is evident, the right-hand side of (\ref{sys.u0.intro}) depends on (derivatives of) $v|_{x = 0}$. The term $g_{(u)}$ is a forcing term which is specified in (\ref{sys.u0.app}).

Thus, the approach we take is to analyze (\ref{intro.v.sys}) to control $v$ in terms of the boundary trace, $u^0$, and then to analyze (\ref{sys.u0.intro}) in order to control the boundary trace $u^0$ in terms of $v$. We may schematize this procedure via: 
\begin{align} \label{diag1}
u^0 \xrightarrow{\text{DNS}^{-1}} v \xrightarrow{\mathcal{L}^{-1}} u^0. 
\end{align}

\noindent We then recover a solution to the original Navier-Stokes equation (NS) via a fixed point of (\ref{diag1}). This structure of analysis gives rise to a linked set of inequalities (see below, (\ref{scheme.1}), for the scheme of estimates). 

\subsubsection*{ \normalfont \textit{Part 1: $\mathcal{L}^{-1}$ and Boundary Estimate of $u^0$}}

Let us turn now to the system, (\ref{sys.u0.intro}). We first decompose the coefficient $v_s$ (refer back to (\ref{exp.u}) for the definition) in two different ways: 
\begin{align}  \label{opt}
&v_s =\begin{cases} v_{\parallel} + \bar{v}^1_e + \sqrt{\eps} \bar{v}^1_p + \text{ higher order terms}, \\ v^0_p + v^1_e + \sqrt{\eps}v^1_p + \text{ higher order terms} \end{cases},
\end{align}

\noindent where $v_{\parallel}$ has been defined above in (\ref{parallel}), and $\bar{v}^1_e, \bar{v}^1_p$ are both defined in (\ref{intro.bar.prof}). The key point of these definitions can be gleaned by examining the leading order of $v_s$ which is $v^0_p + v^1_e$. Both of these quantities decay at $y = \infty$. In the first case of (\ref{opt}), we rewrite this sum as $\{v^0_p - v^0_p|_{y = 0}\} + \{v^1_e - v^1_e|_{Y = 0}\} = \bar{v}^1_p + \bar{v}^1_e$, where both of these quantities do not decay at $y = \infty$. Correspondingly, we have a decomposition of the operator $\mathcal{L}$ into: 
\begin{align} 
\begin{aligned} \label{intro.L.d}
&\mathcal{L} u^0 := \mathcal{L}_{\parallel} u^0 + \sqrt{\eps}A u^0 + \bar{v}^1_e u^0_{yy} + \text{ higher order terms}, \\
&\mathcal{L}_{\parallel} u^0 := - u^0_{yyy}+ v_{\parallel} u^0_{yy} - u^0 v_{\parallel yy}, \\
&A := \bar{v}^1_p u^0_{yy} -  u^0 \bar{v}^1_{pyy}.
\end{aligned}
\end{align}

It is first important to study the spectrum of $\mathcal{L}_{\parallel}$. Our first key observation is that $\bar{u}^0_p|_{x = 0}$ is an element in $\text{Ker}(\mathcal{L}_{\parallel})$ thanks to the Prandtl equation, (\ref{Pr.leading.intro}). Correspondingly, we decompose $u^0$ in the following manner:
\begin{definition} \normalfont Define ``parallel" profiles: 
\begin{align} \label{parallel}
u_{\parallel} = \bar{u}^0_p|_{x = 0} \text{ and } v_{\parallel} = \bar{v}^0_p|_{x = 0}, 
\end{align}

\noindent and the corresponding decomposition: 
\begin{align} \label{basic.dec}
&u^0 = u_{\perp} + \kappa u_{\parallel}, \hspace{5 mm} \kappa := \frac{ \omega[u^0]}{\omega[u_{\parallel}]}. 
\end{align}
\end{definition}

\noindent Here $\omega$ is a linear map satisfying $0 < \omega[u_{\parallel}] < \infty$. We refrain at this time from discussing the particular choice for $\omega$; this level of detail can be found in Section \ref{Section.1}. The reason for the use of ``parallel" and, correspondingly, ``perpendicular" is because $u_{\parallel}$ is in the kernel of the crucial operator, $\mathcal{L}_{\parallel}$ (see below, (\ref{intro.L.d})) whereas $u_\perp$ is orthogonal to the kernel.  

Our first estimate, (see Lemma \ref{L.estimate}), leads to the following lower bound:
\begin{align} \label{intro.first.est}
\| \mathcal{L}_{\parallel} u_{\perp} \langle y \rangle \| \gtrsim \| u_{\perp} \|_{\Upsilon} \text{ where } u_\perp \perp u_{\parallel}. 
\end{align}

\noindent The outcome of Lemma \ref{L.estimate} is then: 
\begin{align} \label{est.J.intro}
\| u_\perp \|_{\Upsilon}  \lesssim o(1) \eps^{\frac{1}{4}}|\kappa| + \| F_{(u)} \langle y \rangle \| + \bar{v}^1_e u_{\perp yy} \text{ contributions.}
\end{align}

Our second ingredient is to control the coefficient $\kappa$, the ``parallel" component of $u^0$, in the decomposition, (\ref{basic.dec}). For this, we use the equation (\ref{sys.u0.intro}). In particular, the operator $S u_{\parallel} = A u_{\parallel} + \frac{\bar{v}^1_e}{\sqrt{\eps}} u_{\parallel yy} - \sqrt{\eps} \Delta v^1_e u_{\parallel}$ is utilized to control this projection. To do this, we require the non-degeneracy and smallness condition in (\ref{charlie.1}) - (\ref{charlie.3}). Specifically, Lemma \ref{Lemma.kappa} gives
\begin{align} \label{intro.second.est}
\eps^{\frac{1}{4}} |\kappa| \lesssim  \| u_\perp \|_{\Upsilon}  +  \eps^{-\frac{1}{4}}\| F_{(u)} \langle y \rangle^{\frac{1}{2}+} \| + \bar{v}^1_{e} u^0_{yy} \text{ contributions}. 
\end{align}

Lastly, we need to show that $\bar{v}^1_e u^0_{yy}$ is a small perturbation. However, it is evident that $\bar{v}^1_e u^0_{yy}$ is an order 1 term. It is therefore difficult to imagine that these terms can be treated perturbatively. The key feature that we capitalize on is that $\bar{v}^1_e$ exhibits Euler scaling, and it thus suffices to localize to the far-field region: $Y = \frac{y}{\sqrt{\eps}} \gtrsim 1$. To capitalize on this localization, we employ the second decomposition, (\ref{opt}.2), under which most coefficients have decayed rapidly and become negligible in the region $y \gtrsim \frac{1}{\sqrt{\eps}}$. It is in this estimate that we demand $v^1_e|_{x = 0} > 0$ in order to extract a lower bound from $v^1_e u^0_{yy}$.

Combining the estimates (\ref{intro.first.est}),  (\ref{intro.second.est}) with Lemma \ref{lemma.Z.scale.1}, we are able to prove the following estimate 
\begin{align} \label{prop.intro.1.est}
\| u^0 \|_B & \lesssim \| F_{(u)} \{w_0 + \eps^{-\frac{1}{4}} \langle y \rangle^{\frac{1}{2}+} \}\| 
\end{align}

\subsubsection*{\normalfont \textit{Part 2: Solving DNS for $v$}}

We now turn our attention to (\ref{intro.v.sys}). The goal is to establish control over the norms $\| \cdot \|_{Y_{w_0}}, \| \cdot \|_{Y_1}, \| \cdot \|_{X_1}$. Since the DNS equation is the same as in \cite{GI1}, we simply state the following: 

\begin{proposition} \normalfont There exists a unique solution, $v$, to the system (\ref{intro.v.sys}) that satisfies (at the linear level): 
\begin{align}
\left.
\begin{aligned} \label{scheme.1}
&\| u^0 \|_{B}^2 \lesssim \eps^{\frac{1}{2}-} \| v \|_{Y_{1}}^2 + \eps^{\frac{1}{2}+} \| v \|_{Y_{w_0}}^2 + \eps^{\frac{1}{2}+\frac{3-}{16} } \| v \|_{X_1}^2 + \text{Data} \\
&\| v \|_{X_1}^2 \lesssim \eps^{-\frac{1}{2}} \|u^0 \|_{B}^2 + \text{Data} \\
&\| v \|_{Y_{1}}^2 \lesssim \eps^{\frac{3}{8}} \| v \|_{X_1}^2 + \eps^{\frac{3}{8}} \| u^0 \|_B^2+ \text{Data} \\
&\| v \|_{Y_{w_0}}^2 \lesssim o_L(1) \| v \|_{X_1}^2 + L \eps^{-\frac{1}{2}} \| u^0 \|_{B}^2 + \text{Data}.
\end{aligned}
\right\}.
\end{align}
\end{proposition}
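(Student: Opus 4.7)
The four inequalities in (\ref{scheme.1}) couple the boundary trace $u^0$ at $\{x=0\}$ to the interior unknown $v$ on $\Omega$. I would treat them in two groups: the first bound (for $\|u^0\|_B$) is a boundary-to-interior trace computation that feeds the already-established Part 1 estimate (\ref{prop.intro.1.est}), while the last three are separate energy identities for the DNS system (\ref{intro.v.sys}) measured in the three norms $X_1$, $Y_1$, $Y_{w_0}$, each dominated by the source $-F_{u^0} \sim u^0$.

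For the bound on $\|u^0\|_B$, apply (\ref{prop.intro.1.est}) with $F_{(u)} = F_{(v)}$ from (\ref{sys.u0.intro}), which expresses $F_{(v)}$ as a sum of traces of $v$ at $\{x=0\}$ carrying explicit $\eps$-powers: $\eps u_s u_{sx} q_x|_{x=0}$, $\eps v_{xyy}|_{x=0}$, $\eps^2 v_{xxx}|_{x=0}$, and $\eps v_s v_{xy}|_{x=0}$. Estimating each term in the weight $w_0 + \eps^{-1/4}\langle y\rangle^{\frac{1}{2}+}$ and using Cauchy--Schwarz against the boundary seminorms $|q|_{\p,2,w}$ and $|v|_{\p,3,w}$ in (\ref{defn.norms.ult}) converts boundary contributions into the interior norms $\|v\|_{X_1}, \|v\|_{Y_1}, \|v\|_{Y_{w_0}}$; the $\eps^{\frac{1}{2}+\frac{3-}{16}}$ factor arises from absorbing the $\eps^{-3/16}$ rescaling of $v$ built into $\|v\|_{X_1}$, while $\eps^{\frac{1}{2}-}$ and $\eps^{\frac{1}{2}+}$ pair with the unrescaled $||||v||||_w$ parts of $\|v\|_{Y_1}$ and $\|v\|_{Y_{w_0}}$.

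For the $X_1$-bound, which is the principal energy identity, I would test $\text{DNS}(v)$ against a multiplier of the form $u_s^2 q_x$ with weight: the Rayleigh piece $-\p_x R[q]$ integrates by parts in $x$ to yield the coercive $\|u_s \nabla_\eps q_x\|^2$; the biLaplacian $\Delta_\eps^2 v$ yields the fourth-order controls $\|\sqrt{\eps}v_{xyyy},\eps v_{xxyy},\eps^{3/2} v_{xxxy},\eps^2 v_{xxxx}\|^2$ after writing $v = u_s q$ and performing the Rellich-type expansions from \cite{GI1}; boundary residues on $\{x=0,L\}$ and $\{y=0\}$ realize exactly the seminorms $|q|_{\p,2,w},\, |v|_{\p,3,w}$ present in $\|v\|_{X_1}$ and must land on the LHS. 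The source $F_{u^0}$ in (\ref{U.def}) is absorbed by $\eps^{-1/2}\|u^0\|_B^2$ via weighted Hardy-type inequalities applied to $v_{sx} u^0_{yy}$ and $u^0 \Delta_\eps v_{sx}$. For the $Y_1$- and $Y_{w_0}$-bounds, I would test the vorticity equation itself (rather than its $\p_x$) against $v\, w^2$ with $w \in \{1, w_0\}$: these control one fewer derivative, and interpolation against the $X_1$-estimate produces the small factor $\eps^{3/8}$, consistent with $\tfrac38 = 2 \cdot \tfrac{3}{16}$. For $Y_{w_0}$, the $o_L(1)$ and the $L$-factor arise from Poincar\'e in $x \in (0,L)$ using $v|_{x=0}=0$ combined with a small-$L$ absorption of transport terms.

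The main obstacle is executing the $X_1$-identity without the $\bar u|_{y=0}>0$ crutch used in \cite{GN}: the multiplier must balance $\p_x R[q]$ against $\Delta_\eps^2 v$ so that all boundary residues realize the seminorms defining $\|v\|_{X_1}$ exactly, with neither uncontrollable singularity at $\{y=0\}$ nor surplus that would be impossible to absorb back. The $\eps$-exponents in (\ref{scheme.1}) must be tracked with precision, since the closure of the scheme (\ref{diag1}) only succeeds if chaining the four inequalities yields a strict contraction.
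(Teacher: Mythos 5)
Your treatment of the first inequality is essentially the paper's: the bound (\ref{prop.intro.1.est})/(\ref{B.thm.estimate}) from Section \ref{Section.1} is applied with $F = F_{(v)} + g_{(u)}$, and the $\{x=0\}$ traces in $F_{(v)}$ are converted into the $v$-norms. (Two details you gloss over but which carry the exponents: an interpolation $\|F_{(v)}\langle y\rangle^{\frac12+}\|\le \|F_{(v)}\langle y\rangle\|^{\frac12+}\|F_{(v)}\|^{\frac12-}$ to distribute the weights, and the fact that near $y=0$ the boundary seminorms $|q|_{\p,2,w},|v|_{\p,3,w}$ are degenerate in $u_s$, so the paper instead uses an $x$-trace interpolation such as $\|\eps v_{xxx}\|^{1/2}\|\eps^2 v_{xxxx}\|^{1/2}$ -- this, not merely the $\eps^{-3/16}$ rescaling, is what produces the $\eps^{\frac12+\frac{3-}{16}}\|v\|_{X_1}^2$ term.)

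The genuine gap is in your derivation of the last three inequalities. The paper does not prove them by new energy identities here: it imports from \cite{GI1} two basic estimates for the DNS system -- a quotient estimate obtained from the multipliers $q_x,q_{xx},q_{yy}$ (weighted), controlling $|||q|||_w$, and a fourth-order estimate obtained from the multipliers $\eps^2 v_{xxxx}+\eps u_s v_{xxyy}$ (weighted), controlling $||||v||||_w$ with a built-in factor $\eps^{3/8}|||q|||_1^2$ on the right -- and then takes specific linear combinations ($\eps^{-3/8}\times$ the fourth-order estimate for $X_1$, $\eps\times$ the quotient estimate for $Y_1$, and an $\eps L^{-\sigma_2}$-weighted combination for $Y_{w_0}$), together with the separate boundary lemma (\ref{key.z.est}) and integrations by parts in $x$ to bound $\mathcal{B}_{X_1},\mathcal{B}_{Y_1},\mathcal{B}_{Y_{w_0}}$ by $\eps^{-\frac12}\|u^0\|_B^2$, $\eps^{\frac38}\|u^0\|_B^2$, $L\eps^{-\frac12}\|u^0\|_B^2$. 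Your plan deviates in two ways that would fail. First, a single multiplier of the form $u_s^2 q_x$ cannot control $\|v\|_{X_1}$: it produces $\|u_s\nabla_\eps q_x\|$ but none of the fourth-order quantities $\eps^{-3/16}\{\sqrt{\eps}v_{xyyy},\eps v_{xxyy},\eps^{3/2}v_{xxxy},\eps^2 v_{xxxx}\}$ in (\ref{defn.norms.ult}); those require the fourth-order multipliers. Second, and more seriously, deriving $Y_1$ and $Y_{w_0}$ by testing the \emph{undifferentiated} vorticity equation against $v w^2$ cannot work: $\|v\|_{Y_w}=||||v||||_w+\sqrt{\eps}|||q|||_w$ also contains the fourth-order content, which an energy pairing of a third-order equation with $v$ does not generate, and working at the level of (\ref{eqn.vort.intro}) reintroduces exactly the $-R[q]-u_{yyy}$ interaction (\ref{leading.1}) that, as the paper emphasizes, produces singularities too severe to handle in the motionless-boundary case -- avoiding it is the whole point of passing to DNS. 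Relatedly, the $\eps^{3/8}$ gain in the $Y_1$ estimate is not obtained by interpolating against $X_1$; it is hard-wired into the fourth-order DNS estimate. Finally, the proposition also asserts existence and uniqueness of $v$, which you do not address (the paper obtains it from the \cite{GI1} machinery for the same DNS system).
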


\noindent Above $w_0$ is a specific weight, which for the purposes of the present discussion we will set to be $\frac{\langle y \rangle}{v^1_e|_{x = 0}}$. It is clear that the above scheme of estimates closes to yield control over $\| u^0, v \|_{\mathcal{X}}$.

\subsection{Other Works}

Let us now place this result in the context of the existing literature. To organize the discussion, we will focus on the setting of stationary flows in dimension 2. This setting in particular occupies a fundamental role in the theory, as it was the setting in which Prandtl first formulated and introduced the idea of boundary layers for Navier-Stokes flows in his seminal 1904 paper, \cite{Prandtl}. 

In this context, one fundamental problem is to establish the validity of the expansions (\ref{exp.u}). This was first achieved under the assumption of a moving boundary in \cite{GN} for $x \in [0,L]$, for $L$ sufficiently small. The method of \cite{GN} is to establish a positivity estimate to control $||\nabla_\eps v||_{L^2}$, which crucially used the assumed motion of the boundary. Several generalizations were obtained in \cite{Iyer}, \cite{Iyer2}, \cite{Iyer3}, all under the assumption of a moving boundary. First, \cite{Iyer} considered flows over a rotating disk, in which geometric effects were seen, \cite{Iyer2} considered flows globally in the tangential variable, and \cite{Iyer3} considered outer Euler flows that are non-shear.

The classical setup of a nonmoving boundary, considered by Prandtl, has remained open until recently.  We would like to highlight the exciting paper of \cite{Varet-Maekawa} as well as \cite{GI1} which both treat the no-slip boundary condition. These works are mutually exclusive. The main concern of \cite{GI1} treats the classical self-similar Blasius solution which appears to not be covered by \cite{Varet-Maekawa}. On the other hand, \cite{GI1} result does not cover a pure shear boundary layer of the form $(U_0(y), 0)$ since such shears are not a solution to the homogeneous Prandtl equation. 

For unsteady flows, expansions of the form (\ref{exp.u}) have been verified in the following works: \cite{Caflisch1}, \cite{Caflisch2}, \cite{DMM}, \cite{Mae}. The reader should also see \cite{Asano}, \cite{Taylor}, \cite{TWang}, \cite{Kelliher}, \cite{HLop}, \cite{BardosTiti} for related results. There have also been several works (\cite{GGN1}, \cite{GGN2}, \cite{GGN3}, \cite{GN2}, \cite{GrNg1}, \cite{GrNg2}, \cite{GrNg3}) establishing generic invalidity of expansions of the type (\ref{exp.u}) in Sobolev spaces in the unsteady setting. 

A related question is that of wellposedness of the Prandtl equation (the equation for $\bar{u}$, as defined in (\ref{intro.bar.prof})). Since this is not the concern of the present article, we very briefly list some works. In the stationary setting, we point the reader to \cite{Oleinik}, \cite{Oleinik1}, \cite{MD}. In the unsteady setting, the reader should consult \cite{AL}, \cite{MW}, \cite{KMVW}, \cite{Caflisch1} - \cite{Caflisch2},  \cite{Kuka}, \cite{Lom}, \cite{Vicol}, and \cite{GVM}, \cite{GVD}, \cite{GVN}), \cite{EE}, \cite{KVW}, \cite{Hunter} for wellposedness/ illposedness results and references therein. 

The above discussion is not comprehensive; we refer the reader to the review articles, \cite{E}, \cite{Temam} and references therein for a more thorough review of the wellposedness theory.

\section{$\mathcal{L}^{-1}$ and Boundary Estimates for $u^0$} \label{Section.1}

In this section, we study the quantity $u^0 = u|_{x = 0}$. 

\begin{remark} \normalfont For this section, we will work exclusively on the boundary $\{x = 0 \}$. Therefore, all functions (even those with natural extensions to all of $\Omega$) will be thought of as functions of $y$ only. Similarly, inner-products and norms will refer to functions defined on $\mathbb{R}_+$. We will therefore omit the notation $|_{x = 0}$. 
\end{remark}

We define the norm,
\begin{align} \label{L.norm}
\|h\|_{\Upsilon} := \|h_{yyy} \langle y \rangle\| + \|h_{yy}  \langle y \rangle \| + \| h_y\|_{loc} + \|h\|_{loc}.
\end{align}

\noindent We define the corresponding Sobolev space $\Upsilon$ via: 
\begin{align} \n
\Upsilon := \{ h \in L^2 : \| h \|_{\Upsilon} < \infty \}. 
\end{align}

\noindent A standard embedding shows that: 
\begin{align*}
\|h_y \| + \| h \langle y \rangle^{-1} \| \lesssim \| h \|_{\Upsilon}. 
\end{align*}

In this section, the equation we will analyze is:
\begin{align}
\left.
\begin{aligned} \label{sys.u0}
&\mathcal{L}_\delta u^0 := - u^0_{yyy} + (v_s + \delta) u^0_{yy} - u^0 \Delta_\eps v_s = F, \\
&F := \underbrace{-2\eps u_s u_{sx}q_x|_{x = 0} - 2\eps v_{xyy}|_{x = 0} - \eps^2 v_{xxx}|_{x = 0} - \eps v_s u_{xx}|_{x = 0}}_{F_{(v)}} + g_{(u)}, \\
& u^0(0) = 0, \p_y u^0(\infty) = 0, \p_{yy} u^0(\infty) = 0.
\end{aligned}
\right\}
\end{align}

 We are ultimately interested in the $\delta = 0$ case of (\ref{sys.u0}), which is (\ref{sys.u0.intro}). We will first use a decomposition of $v_s$ in order to decompose the operator $\mathcal{L}_\delta$. We first recall the definitions in (\ref{intro.bar.prof}) and (\ref{parallel}), and correspondingly decompose $v_s$ into: 
\begin{align*}
v_s = v_{\parallel} + \sqrt{\eps} \bar{v}^1_p + \sum_{i = 2}^n \sqrt{\eps}^i \bar{v}^i_p + \sum_{i = 1}^{n} \sqrt{\eps}^{i-1} \bar{v}^i_e. 
\end{align*}

\noindent We thus have a decomposition of: 
\begin{align*}
\mathcal{L}_\delta = \mathcal{L}_{\parallel} + \sqrt{\eps}A + J, 
\end{align*}

\noindent where
\begin{align} \label{label.L}
&\mathcal{L}_{\parallel} u^0 :=  -u^0_{yyy} +v_{\parallel}  u^0_{yy} - u^0  v_{\parallel yy}, \\ \label{label.A}
&Au^0 :=  \bar{v}^1_p u^0_{yy} - u^0 \bar{v}^1_{pyy}, \hspace{3 mm} r(y) :=  \bar{v}^1_p u_{\parallel y} - u_\parallel \bar{v}^1_{py}, \\ \label{label.J}
&J u^0 := \delta u^0_{yy} + u^0_{yy} \sum_{i = 1}^{n} \sqrt{\eps}^{i-1} \bar{v}^i_e + u^0_{yy} \sum_{i = 2}^n \sqrt{\eps}^i \bar{v}^i_p  - \eps u^0 \sum_{i = 0}^n \sqrt{\eps}^i v^i_{pxx} \\ \n
& \hspace{10 mm} - u^0 \sum_{i = 2}^n \sqrt{\eps}^i v^i_{pyy} - \eps u^0 \sum_{i = 1}^n \sqrt{\eps}^{i-1} \Delta v^i_e. 
\end{align}

Define the following bounded linear functional on $\Upsilon$: 
\begin{align*}
\omega[g] := (g_{yy}, u_{\parallel yy}), \hspace{5 mm} \omega: \Upsilon \rightarrow \mathbb{R}. 
\end{align*}

\noindent It is clear that $\omega$ is bounded on $\Upsilon$. As a result, we define: 
\begin{align*}
\Upsilon_\perp := \{ g \in \Upsilon : \omega[g] = 0 \}, 
\end{align*}

\noindent which is a closed subspace of $\Upsilon$. We will now project via: 
\begin{align} \label{basic.dec}
&u^0 = u_{\perp} + \kappa u_{\parallel}, \hspace{5 mm} \kappa := \frac{\omega[u^0]}{\omega[u_{\parallel}]}. 
\end{align}

\noindent  We note that $0 < \omega[u_{\parallel}] < \infty$. We now denote our $B$ norm by: 
\begin{align} \label{B.norm}
\| u^0 \|_{B} := & \| u_{\perp} \|_{\Upsilon} + \eps^{\frac{1}{4}} |\kappa| + \|\eps^{\frac{1}{4}}u^0_{yy} \frac{\langle y \rangle}{\sqrt{v_e}} \|.
\end{align}
 
\begin{theorem} \label{thm.Umain} \normalfont Assume the boundary data satisfy (\ref{charlie.1}) - (\ref{charlie.3}) and (\ref{assume.Euler.i}). Then there exists a unique solution to (\ref{sys.u0}) satisfying: 
\begin{align} \label{B.thm.estimate}
\|u^0 \|_{B} \lesssim \|F \{w_0 + \eps^{-\frac{1}{4}} \langle y \rangle^{\frac{1}{2}+} \}  \|.
\end{align} 
\end{theorem}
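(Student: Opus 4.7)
The plan is to analyze (\ref{sys.u0}) via the decomposition $u^0 = u_\perp + \kappa u_\parallel$ of (\ref{basic.dec}), which is adapted to the kernel of $\mathcal{L}_\parallel$. Since $u_\parallel = \bar{u}^0_p|_{x = 0}$ satisfies the leading-order Prandtl equation (\ref{Pr.leading.intro}) at $x = 0$, one verifies directly that $\mathcal{L}_\parallel u_\parallel = 0$. Thus the strategy splits into three ingredients: (i) a coercivity estimate for $\mathcal{L}_\parallel$ on $\Upsilon_\perp$, which controls $u_\perp$; (ii) a projected equation that controls $\kappa$ via the non-degeneracy condition (\ref{charlie.3}); and (iii) a far-field estimate for the order-one term $\bar v^1_e u^0_{yy}$, leading to the weighted bound on $\eps^{1/4} u^0_{yy} \langle y \rangle / \sqrt{v_e}$ appearing in $\|u^0\|_B$.

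For the $u_\perp$ estimate, I would rewrite (\ref{sys.u0}) as $\mathcal{L}_\parallel u_\perp = F - \sqrt{\eps}A u^0 - J u^0$ (using $\mathcal{L}_\parallel u_\parallel = 0$) and test against a multiplier of the form $u_{\perp,yyy}\langle y\rangle^2$ combined with a lower-order corrector. The positivity $u_\parallel > 0$, $v_\parallel > 0$ inherited from the Blasius profile together with the Gaussian decay of $v_{\parallel yy}$ produces the coercivity $\|\mathcal{L}_\parallel h \langle y\rangle\| \gtrsim \|h\|_\Upsilon$ on $\Upsilon_\perp$; this is essentially (\ref{intro.first.est}). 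The orthogonality $\omega[u_\perp] = 0$ is needed to kill the one-dimensional obstruction coming from $u_\parallel \in \ker \mathcal{L}_\parallel$. Applying this bound yields
\begin{equation*}
\|u_\perp\|_\Upsilon \lesssim \|F \langle y\rangle\| + o(1)\eps^{1/4}|\kappa| + \text{contributions involving } \bar v^1_e u^0_{yy}.
\end{equation*}

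To control $\kappa$, I would test (\ref{sys.u0}) against the weight $K(y) = u_\parallel e^{-\int_1^y v_\parallel}$ from (\ref{defn.boldd}). The choice of $K$ is engineered so that after integration by parts the contribution of $\mathcal{L}_\parallel$ collapses; the leading surviving term comes from $\sqrt{\eps} A u_\parallel$, whose pairing against $K$ recovers the degree $\bold d(r)$ of the function $r$ in (\ref{label.A}). The non-degeneracy condition (\ref{charlie.3}), transferred via integration by parts to the relevant Prandtl forcing, guarantees a uniform lower bound $|\bold d(r)| \gtrsim 1$, and one deduces
\begin{equation*}
\eps^{1/4}|\kappa| \lesssim \|u_\perp\|_\Upsilon + \eps^{-1/4} \|F \langle y\rangle^{\frac{1}{2}+}\| + \bar v^1_e u^0_{yy}\text{ contributions},
\end{equation*}
as in (\ref{intro.second.est}).

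The main obstacle is the term $\bar v^1_e u^0_{yy}$ in $J u^0$, which is formally $O(1)$ and therefore cannot be absorbed as a routine perturbation. The key observation, anticipated in the overview, is that $\bar v^1_e$ lives on the Euler scale $Y = \sqrt{\eps} y$, so in the Prandtl zone $y \lesssim 1$ it is essentially $O(\sqrt{\eps} y)$ and perturbative, while in the far field $y \gtrsim 1/\sqrt{\eps}$ the Prandtl correctors have exponentially decayed. In this far-field regime I would switch to the alternative decomposition $v_s = v^0_p + v^1_e + \sqrt{\eps} v^1_p + \cdots$ of (\ref{opt})$_2$ and test against a multiplier weighted by $\langle y \rangle^2 / v^1_e|_{x = 0}$. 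The sign condition $v^1_e|_{x = 0} > 0$ from (\ref{charlie.1}) converts $v^1_e u^0_{yy}$ into a coercive contribution on $\eps^{1/4} u^0_{yy} \langle y \rangle / \sqrt{v^1_e|_{x=0}}$, which is exactly the third piece of $\|u^0\|_B$; this packaging is the content of Lemma \ref{lemma.Z.scale.1}. Combining the three estimates, choosing $L$ small enough to absorb the $o(1)\eps^{1/4}|\kappa|$ and $o(1)\|u_\perp\|_\Upsilon$ cross-terms, produces (\ref{B.thm.estimate}). Existence and uniqueness then follow from this a priori estimate by a standard continuity/Galerkin argument applied to the third-order linear ODE-type operator $\mathcal{L}_\delta$ with boundary conditions $u^0(0) = 0$, $u^0_y(\infty) = u^0_{yy}(\infty) = 0$, passing to the limit $\delta \to 0$ at the end.
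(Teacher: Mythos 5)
Your overall skeleton matches the paper's proof (decomposition $u^0=u_\perp+\kappa u_\parallel$; coercivity of $\mathcal{L}_\parallel$ on $\Upsilon_\perp$; control of $\kappa$ through the degree functional built from $K(y)$ and the non-degeneracy condition (\ref{charlie.3}); a localized far-field estimate exploiting $v^1_e|_{x=0}>0$ via the second decomposition in (\ref{opt}); existence by regularizing with $\delta$ and passing to the limit). However, there is a genuine gap at the coercivity step, which is the analytic heart of the theorem. You claim $\|\mathcal{L}_\parallel u_\perp\langle y\rangle\|\gtrsim\|u_\perp\|_{\Upsilon}$ follows from testing against a multiplier $u_{\perp yyy}\langle y\rangle^{2}$ plus a corrector, using positivity of $u_\parallel,v_\parallel$ and the decay of $v_{\parallel yy}$. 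This cannot work as described: $u_\parallel$ itself satisfies $\mathcal{L}_\parallel u_\parallel=0$ together with all the imposed conditions ($u_\parallel(0)=0$, $u_{\parallel y}(\infty)=u_{\parallel yy}(\infty)=0$), so no integration-by-parts identity that uses only the equation and these boundary conditions can be bounded below on $\Upsilon$; the constraint $\omega[u_\perp]=0$ must enter quantitatively, and you give no mechanism by which an energy identity sees it. Moreover the kernel of $\mathcal{L}_\parallel$ is three-dimensional, and ruling out the other two elements is itself nontrivial. The paper handles this by combining an explicit description of the kernel $\{u_\parallel,\tilde u_s,u^p\}$ (Lemma \ref{propn.span}) and its asymptotics (Lemma \ref{lemma.asy.1}) with an abstract compactness lemma (Lemma \ref{compactness.lemma}) applied twice: positivity by direct parts only for the limiting principal part $-h_{yyy}+v_\parallel^\infty h_{yy}$ (Lemma \ref{lemma.real.pos}), with $(v_\parallel-v_\parallel^\infty)h_{yy}$ and $v_{\parallel yy}h$ treated as compact perturbations, and kernel triviality on $\Upsilon_\perp$ obtained from $h(0)=0$, the exponential growth of $u^p_{yy}$ at infinity, and finally $\omega[h]=0$ to eliminate $u_\parallel$ (Lemma \ref{Lemma.cpct.1}). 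Without an argument of this type (or some substitute that genuinely uses the orthogonality and excludes $\tilde u_s$, $u^p$), your estimate (i) is unproved.

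Two smaller inaccuracies: the quantity whose degree must be bounded below is $\bold{d}(S(u_\parallel))$ with $S u_\parallel = A u_\parallel + \eps^{-1/2}\bar v^1_e u_{\parallel yy} - \sqrt{\eps}\Delta v^1_e u_\parallel$; the Euler pieces are of the same order as $\sqrt{\eps}A u_\parallel$ and cannot be lumped into the perturbative $\bar v^1_e u^0_{yy}$ contributions. The lower bound $|\bold{d}(S(u_\parallel))|\gtrsim 1$ is not simply $|\bold{d}(r)|\gtrsim 1$; it requires the cancellation identity (\ref{cg.pos}) (which uses the equation defining $f^{(1)}$) to reduce to $\mathfrak{n}$, and then the smallness of the shear terms from (\ref{charlie.2}) to isolate $\int K g^{u,1}_{ext,p}$. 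Finally, the closing absorption of the $o(1)\eps^{1/4}|\kappa|$, $o(1)\|u_\perp\|_\Upsilon$, and far-field cross-terms is achieved through the smallness parameter $\delta_s$ from (\ref{charlie.2}), the cutoff scale $\gamma$, and powers of $\eps$ — not by taking $L$ small; $L$ plays no role in the $\{x=0\}$ estimates. The existence part of your sketch (a priori bound plus $\delta$-regularization) is consistent with the paper's Steps 1--4, which proceed via an explicit Wronskian solution of $-u'''+\delta u''=F$, weighted estimates, and the Fredholm alternative.
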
 
 
This section will be devoted to establishing this theorem.

\subsection{Existence} \label{subsection.existence.u0}

We begin with an existence result. To state our existence result, we will \textit{assume the a-priori estimate} that we will establish in the forthcoming sections: 
\begin{align} \label{assume.prior}
\| u^0 \|_{B} \lesssim \|F w_1 \| \text{ for } u^0 \in B \text{ solutions to } (\ref{sys.u0}),
\end{align}

\noindent where $w_1 := w_0 + \eps^{-\frac{1}{4}} \langle y \rangle^{\frac{1}{2}+}$.

\subsubsection*{\normalfont \textit{Step 1: Highest Order Equation}}

We may start with the modified problem: 
\begin{align} \label{part1.ODE}
\begin{aligned}
&\Theta u := -u''' + \delta u'' = F \in C^\infty_0, \hspace{5 mm} u(0) = 0, \hspace{3 mm} \p_y^k u(\infty) = 0 \text{ for } k \ge 1. 
\end{aligned}
\end{align}

\begin{lemma} \normalfont Let $\delta > 0$. There exists a unique solution, $u$, to (\ref{part1.ODE}) whose derivatives vanishes outside of an interval $[0, I_0]$ for $I_0 < \infty$. Moreover, $u$ can be expressed explicitly in terms of $F$ via the formula $u = C_1 + u_p$, where $u_p$ is the particular solution associated to $F$, as defined below. 
\end{lemma}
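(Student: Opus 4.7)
The plan is to reduce (\ref{part1.ODE}) to a first-order ODE via the substitution $w = u''$ and then recover $u$ by two integrations. The characteristic polynomial of the homogeneous equation $-u''' + \delta u'' = 0$ has roots $0, 0, \delta$, so its solutions are spanned by $\{1,\, y,\, e^{\delta y}\}$. Since $\delta > 0$, the decay conditions $\p_y^k u(\infty) = 0$ for $k \ge 1$ eliminate both $y$ and $e^{\delta y}$, leaving only the constant homogeneous solution; this accounts for the free parameter $C_1$ in the representation $u = C_1 + u_p$.

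For the particular solution, I would set $w = u_p''$, turning the equation into the first-order linear ODE $-w' + \delta w = F$. Multiplying by the integrating factor $e^{-\delta y}$ and imposing $w(\infty) = 0$ yields
\[
w(y) = e^{\delta y} \int_y^\infty e^{-\delta z} F(z) \ud z.
\]
Because $F \in C^\infty_0$, there exists $I_0 < \infty$ such that $\mathrm{supp}(F) \subset [0, I_0]$, and the formula above gives $w(y) = 0$ for all $y \ge I_0$. I then define
\[
u_p'(y) := -\int_y^\infty w(s) \ud s, \qquad u_p(y) := \int_0^y u_p'(z) \ud z,
\]
which satisfies $u_p'(\infty) = 0$, $u_p(0) = 0$, and for which $u_p', u_p'', u_p'''$ all vanish on $[I_0, \infty)$ (even though $u_p$ itself stabilizes to a finite constant at infinity, consistent with the statement that only the \emph{derivatives} vanish outside a bounded interval).

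Uniqueness is then immediate: the difference of two solutions satisfies the homogeneous equation with $u(0) = 0$ and $\p_y^k u(\infty) = 0$ for $k \ge 1$, which by the first paragraph forces it to be a constant, and the boundary value at $y=0$ pins this constant to zero. There is no real analytical obstacle here --- the entire argument is an explicit reduction-of-order computation. The only subtle point to keep track of is that, while compact support of $F$ propagates to compact support of $u_p'', u_p', u_p'''$, it does \emph{not} propagate to $u_p$ itself; this asymmetry is what forces the free parameter $C_1$ to appear as an additive constant rather than being absorbed into $u_p$.
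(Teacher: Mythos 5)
Your proof is correct, and it reaches the same explicit solution by a different elementary route than the paper. You exploit the constant-coefficient factorization by setting $w = u''$, solving the first-order equation $-w' + \delta w = F$ with the integrating factor $e^{-\delta y}$ and the decaying choice of integration constant, and then recovering $u_p$ by two integrations with the constants fixed by $u_p'(\infty) = 0$ and $u_p(0) = 0$; the paper instead runs variation of parameters with the Wronskian of $\{1, y, e^{\delta y}\}$, obtaining $u_p = c_1 + c_2 y + c_3 e^{\delta y}$ with $c_i(y)$ given by integrals over $[y,\infty)$, then kills $C_2, C_3$ by evaluation at infinity and sets $C_1 = -u_p(0)$. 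Both arguments make the key structural point transparent: because $F$ is compactly supported and all integration constants are chosen so that the relevant quantities are integrals from $y$ to $\infty$, the derivatives of the solution vanish beyond the support of $F$, while $u$ itself only stabilizes to a constant --- which is exactly why the free parameter survives only as the additive constant $C_1$. The one cosmetic difference is normalization: your $u_p$ satisfies $u_p(0) = 0$, so in your representation $C_1 = 0$, whereas the paper's particular solution generically has $u_p(0) \neq 0$ and $C_1 = -u_p(0)$; since the lemma's $u_p$ is simply whatever particular solution is constructed in the proof, this is immaterial. Your reduction-of-order argument is arguably leaner and makes the compact support of $u', u'', u'''$ immediate from the formula for $w$; the paper's Wronskian computation is more systematic and generalizes mechanically to perturbed operators, which is the direction the subsequent steps (compact perturbation by $K = v_s \p_y^2 - \Delta_\eps v_s$ and the Fredholm alternative) take. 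Your uniqueness argument, via the span $\{1, y, e^{\delta y}\}$ and the conditions at infinity and at $y = 0$, is complete and matches the paper's implicit reasoning.
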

\begin{proof} By assuming the solution $u = e^{ry}$, we obtain the system $-r^3 + \delta r^2 = 0$, which gives the following solutions to the homogeneous problem: $u_1 = 1, u_2 = y, u_3 = e^{\delta y}$. The task is now to produce a particular solution to the forcing, $F$. We may form the Wronskian matrix and its Forcing counterparts:
\begin{align*}
&\bold{W}(y) = 
\begin{bmatrix}
    1 & y & e^{\delta y}\\
    0 & 1 & \delta e^{\delta y} \\
    0 & 0 & \delta^2 e^{\delta y}
  \end{bmatrix} \hspace{10 mm} 
\bold{W}_1(y) = 
\begin{bmatrix}
    0 & y & e^{\delta y}\\
    0 & 1 & \delta e^{\delta y} \\
    F & 0 & \delta^2 e^{\delta y}
\end{bmatrix} \\
&\bold{W}_2(y) = 
\begin{bmatrix}
    1 & 0 & e^{\delta y}\\
    0 & 0 & \delta e^{\delta y} \\
    0 & F & \delta^2 e^{\delta y}
\end{bmatrix} \hspace{8 mm} 
\bold{W}_3(y) = 
\begin{bmatrix}
    1 & y & 0\\
    0 & 1 & 0\\
    0 & 0 & F
\end{bmatrix}
\end{align*}

The corresponding determinants are: 
\begin{align*}
&W(y) = |\bold{W}| = \delta^2 e^{\delta y}, \hspace{5 mm} W_1(y) = |\bold{W}_1| = F e^{\delta y} (\delta y-1) \\
&W_2(y) = |\bold{W}_2| = -\delta F e^{\delta y }, \hspace{2 mm} W_3(y) = |\bold{W}_3| = F.
\end{align*}

We now have the formula for the particular solution: $u_p = c_1 u_1 + c_2 u_2 + c_3 u_3, \hspace{3 mm} c_i' = \frac{W_i}{W}$. Solving gives: $c_1 = - \int_y^\infty F \delta^{-2} (\delta y' - 1) \ud y'$, $c_2 = \int_y^\infty \delta^{-1} F \ud y'$, $c_3 = - \int_y^\infty F \delta^{-2} e^{-\delta y'} \ud y'$. Recalling that $F$ is compactly supported, we thus have the boundary conditions: $\p_y^j u_p(\infty) = 0$ for all $j \ge 0, u_p(0) = c_1(0) + c_3(0)$. We will thus write the full solution as: $u = C_1 + C_2 y + C_3 e^{\delta y} + u_p$. We now evaluate at $y = \infty$ to see that $C_2 = C_3 = 0$. We evaluate at $y = 0$ to see that $C_1 = - u_p(0)$. This gives the full solution to (\ref{part1.ODE}), $u = C_1 + u_p$. Given this solution $u = C_1 + u_p = -u_p(0) + c_i u_i$, it is clear that the derivatives of $u$ are compactly supported since the coefficients $c_i$ are. 
\end{proof}

Fix $\delta > 0$. Define the normed space:
\begin{align*}
\| u \|_{\mathcal{T}_{m, n}} := \|u_{yyy} \langle y \rangle^m e^{ny}\| + \delta \|u_{yy} \langle y \rangle^m e^{ny}\| 
\end{align*}

\begin{lemma} \normalfont Fix $\delta > 0$. Let $F \in C^\infty_0(\mathbb{R})$. The solution, $u$ is in $\mathcal{T}_{m,n}$ and satisfies the following estimate: $\| u \|_{\mathcal{T}_{m,n}} \lesssim \| F \langle y \rangle^m e^{ny} \|$ for any $m, n$. 
\end{lemma}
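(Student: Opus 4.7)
The strategy is to reduce the third-order problem to a first-order ODE for $w := u''$. Since the equation $-u''' + \delta u'' = F$ becomes $w' = \delta w - F$, and the solution constructed in the previous lemma has compactly supported derivatives (so $w(\infty) = 0$), multiplication by the integrating factor $e^{-\delta y}$ and integration from $y$ to $\infty$ yields the explicit representation
\begin{equation*}
u''(y) = e^{\delta y} \int_y^\infty e^{-\delta y'} F(y') \, dy' = \int_0^\infty e^{-\delta s} F(y+s) \, ds.
\end{equation*}
This formula is the workhorse of the proof.

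Next, I would estimate $u''$ in the weighted $L^2$ norm by applying Minkowski's integral inequality:
\begin{equation*}
\| u'' \langle y \rangle^m e^{ny} \|_{L^2_y} \leq \int_0^\infty e^{-\delta s} \| F(\cdot + s) \langle y \rangle^m e^{ny} \|_{L^2_y} \, ds.
\end{equation*}
A shift of variables in the inner norm, combined with the elementary pointwise bounds $\langle y - s \rangle^m \lesssim_m \langle y \rangle^m (1+s)^{|m|}$ and $e^{n(y-s)} \leq e^{ny} e^{|n|s}$, reduces the right-hand side to
\begin{equation*}
\left( \int_0^\infty e^{-(\delta - |n|) s} (1+s)^{|m|} \, ds \right) \| F \langle y \rangle^m e^{ny} \|,
\end{equation*}
whose prefactor is finite for fixed $\delta > 0$ (the only restriction being $\delta > |n|$, which in the paper's usage is trivial since $n \geq 0$). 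The factor $\delta$ multiplying the $u''$ piece of $\| u \|_{\mathcal{T}_{m,n}}$ is harmless because the implicit constant scales like $1/(\delta + n)$, so $\delta \| u'' \langle y \rangle^m e^{ny} \| \lesssim \| F \langle y \rangle^m e^{ny} \|$ uniformly.

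Finally, to control the top-order norm, I would just use the equation itself: $u''' = \delta u'' - F$, giving
\begin{equation*}
\| u''' \langle y \rangle^m e^{ny} \| \leq \delta \| u'' \langle y \rangle^m e^{ny} \| + \| F \langle y \rangle^m e^{ny} \| \lesssim \| F \langle y \rangle^m e^{ny} \|.
\end{equation*}
Combining with the previous step yields $\| u \|_{\mathcal{T}_{m,n}} \lesssim \| F \langle y \rangle^m e^{ny} \|$ and in particular $u \in \mathcal{T}_{m,n}$. The only subtle step is the middle one, where polynomial and exponential weights must be commuted past the convolution kernel $e^{-\delta s}$; for $n < 0$ one requires $\delta > |n|$ to close the integral, but this is a non-issue since $\delta$ is fixed and $n \geq 0$ in all applications. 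The remainder is a straightforward ODE manipulation.
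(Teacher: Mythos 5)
Your proposal is correct, but it takes a genuinely different route from the paper. The paper proves this lemma by a weighted energy identity: it squares the equation against $\langle y \rangle^{2m}e^{2ny}$, obtaining $\|u'''w\|^2+\delta^2\|u''w\|^2-2\delta(u''',u''\,w^2)=\|Fw\|^2$ with $w=\langle y\rangle^m e^{ny}$, and integrates the cross term by parts (legitimate by the compact support of the derivatives of $u$), which produces the nonnegative contributions $2\delta m\|u''\langle y\rangle^{m-\frac12}e^{ny}\|^2+2\delta n\|u''w\|^2$; this gives the $\mathcal{T}_{m,n}$ bound in one stroke, with a constant independent of $\delta$, and implicitly assumes $m,n\ge0$ so that these terms have a sign. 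You instead integrate the first-order equation for $w=u''$ (the constant of integration fixed by the compact support from the previous lemma) to get the explicit kernel formula $u''(y)=\int_0^\infty e^{-\delta s}F(y+s)\,ds$, then use Minkowski plus Peetre-type commutation of the weights past the kernel, and recover $u'''$ from the equation. This is a valid and quite transparent alternative: it gives pointwise control and would work equally in $L^p$. Two caveats. First, your parenthetical about the restriction $\delta>|n|$ is garbled: for $n\ge0$ one should simply use $e^{n(y-s)}\le e^{ny}$, so no relation between $\delta$ and $n$ is needed at all, while for $n<0$ your method genuinely requires $\delta>|n|$ (and the paper's sign argument degenerates there too, so in both proofs the ``any $m,n$'' is effectively $m,n\ge0$, which is all that is used later). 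Second, your constant degrades as $\delta\downarrow0$: the prefactor behaves like $\delta\int_0^\infty e^{-(\delta+n)s}(1+s)^{|m|}\,ds$, which is of size $\delta^{-|m|}$ when $n=0$, so the claimed $1/(\delta+n)$ scaling is not quite right; this is harmless because the lemma fixes $\delta>0$ and only the $B$-norm a-priori estimate needs $\delta$-uniformity for the limit $\delta\downarrow0$, but the paper's energy proof does give a $\delta$-uniform constant for free.
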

\begin{proof}
We now square both sides of the equation (\ref{part1.ODE}) against the weight $\langle y \rangle^{2m} e^{2ny}$ to obtain 
\begin{align*}
|u''' \langle y \rangle^m e^{ny}|^2 + \delta^2 |u'' \langle y \rangle^{m} e^{ny}|^2 - 2\delta (u''', u'' \langle y \rangle^{2m} e^{2ny}) = \|F \langle y \rangle^m e^{ny}\|^2. 
\end{align*}

\noindent We integrate by parts the cross term, which is possible due to the compact support of $u$, and which generates the positive contribution 
\begin{align*}
2 \delta m \| u'' \langle y \rangle^{m-\frac{1}{2}} e^{ny} \|^2 + 2 \delta n \| u'' \langle y \rangle^{m} e^{ny} \|^2.
\end{align*}
\end{proof}

\subsubsection*{\normalfont \textit{Step 2: Arbitrary $F \langle y \rangle^m e^{ny} \in L^2$}}

We now remove the compact support hypothesis on $F$. 
\begin{lemma} \normalfont Fix $\delta > 0$. Let $F \langle y \rangle^m e^{ny} \in L^2$ for any $n, m$. Then there exists a unique solution $u \in \mathcal{T}_{m,n}$ to the system (\ref{part1.ODE}) that satisfies $\| u \|_{\mathcal{T}_{m,n}} \lesssim \| F \langle y \rangle^m e^{ny} \|$. 
\end{lemma}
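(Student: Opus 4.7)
The approach is a standard density/approximation argument built on the previous lemma.

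First, I would choose an approximating sequence $F_k \in C^\infty_0([0,\infty))$ with $F_k \langle y \rangle^m e^{ny} \to F \langle y \rangle^m e^{ny}$ in $L^2$, which is possible since $C^\infty_0$ is dense in the weighted $L^2$ space. In particular $\|F_k \langle y \rangle^m e^{ny}\| \le 2 \|F \langle y \rangle^m e^{ny}\|$ for $k$ large. For each $k$ the previous lemma produces a solution $u_k$ to (\ref{part1.ODE}) with forcing $F_k$, whose derivatives are compactly supported and which satisfies $\|u_k\|_{\mathcal{T}_{m,n}} \lesssim \|F_k \langle y \rangle^m e^{ny}\|$.

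Next, exploiting linearity, $u_j - u_k$ solves (\ref{part1.ODE}) with compactly supported forcing $F_j - F_k$ and homogeneous boundary conditions, so applying the previous lemma to the difference yields
\[
\|u_j - u_k\|_{\mathcal{T}_{m,n}} \lesssim \|(F_j - F_k) \langle y \rangle^m e^{ny}\|.
\]
Thus $\{u_{k,yy}\}$ and $\{u_{k,yyy}\}$ are Cauchy in the associated weighted $L^2$ spaces with limits $U_2$ and $U_3$. To recover a pointwise limit of $u_k$ itself, I would use the boundary data to write
\[
u_k'(y) = -\int_y^\infty u_k''(z)\,dz, \qquad u_k(y) = \int_0^y u_k'(z)\,dz,
\]
and estimate the tail integrals via Cauchy--Schwarz against $\langle z \rangle^{-m} e^{-nz}$. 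The Cauchy property of $u_{k,yy}$ then promotes to uniform convergence of $u_k$ and $u_k'$ on bounded sets, with decay at infinity. Defining $u$ as this limit, we have $u'' = U_2$ and $u''' = U_3$ a.e., the boundary conditions $u(0) = 0$ and $\p_y^j u(\infty) = 0$ for $j \ge 1$ are inherited, and passing to the limit in $-u_k''' + \delta u_k'' = F_k$ gives $\Theta u = F$ in $L^2$. Lower semicontinuity of the weighted $L^2$ norm then delivers the bound $\|u\|_{\mathcal{T}_{m,n}} \lesssim \|F \langle y \rangle^m e^{ny}\|$.

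Uniqueness is immediate: the difference $w$ of two solutions solves $-w''' + \delta w'' = 0$ with $w(0) = 0$ and $w^{(j)}(\infty) = 0$ for $j \ge 1$, and the general solution $C_1 + C_2 y + C_3 e^{\delta y}$ of the homogeneous ODE is forced to be zero by these conditions.

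The only real obstacle I anticipate is book-keeping in the reconstruction step: ensuring that the tail integrals used to pass from $u_k''$ back to $u_k$ and $u_k'$ are controlled uniformly in $k$, so that pointwise (not merely weak) convergence is obtained and the boundary conditions at $y = \infty$ genuinely transfer to $u$. For $n > 0$ this is an elementary Cauchy--Schwarz estimate against the integrable weight $e^{-nz}$; for $n = 0$ one uses $\langle z \rangle^{-m}$ (taking $m$ sufficiently large, which we are free to do by monotonicity of the weighted $L^2$ norms under shrinking weights).
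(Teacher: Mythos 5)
Your proposal is correct and follows essentially the same route as the paper: approximate $F$ by compactly supported forcings, invoke the previous lemma for uniform $\mathcal{T}_{m,n}$ bounds, and pass to the limit. The only (harmless) variations are that you use linearity to get strong Cauchy convergence of $u_k$ in place of the paper's weak$^\ast$ compactness argument, and you spell out the uniqueness step via the explicit homogeneous solutions $C_1 + C_2 y + C_3 e^{\delta y}$, which the paper leaves implicit.
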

\begin{proof} Given an arbitrary $F$ satisfying $\| F \langle y \rangle^m e^{ny} \| < \infty$, there exists $\tilde{F}_j \in C^\infty_0$ such that $\tilde{F}_j \rightarrow F  \langle y \rangle^{m} e^{ny}$ in $L^2$. Define $F_j = \frac{1}{\langle y \rangle^m e^{ny}} \tilde{F}_j$ so that $F_j \langle y \rangle^{m} e^{ny} \rightarrow F \langle y \rangle^m e^{ny}$ in $L^2$ and clearly $F_j \in C^\infty_0$ as well. We define $u_j$ as solutions to $\Theta u_j = F_j$. In this case, we have $\| u_j \|_{\mathcal{T}_{m,n}} \lesssim \| F_j \langle y \rangle^m e^{ny}\| \le \|F \langle y \rangle^m e^{ny}\|$ by the previous lemma. Thus, there exists a $\mathcal{T}_{m,n}$-weak$^\ast$ limit, called $u$. We now multiply by a compactly supported test function, $\phi$, and integrate: $(u_j'', \phi') + ( v_s u_j'', \phi) - ( \Delta_\eps v_s u_j, \phi) = ( F_j, \phi)$. It is clear we can pass to the limit to obtain a strong solution $u$, which moreover enjoys the estimate $\| u \|_{\mathcal{T}_{m,n}} \lesssim \| F \langle y \rangle^m e^{ny} \|$.
\end{proof}

\subsubsection*{\normalfont \textit{Step 3: Compact Perturbations}}

\begin{lemma} \normalfont Fix $\delta > 0$. Let $F w_0 \in L^2$. Then there exists a unique solution, $u$, satisfying $\mathcal{L}_\delta u = F$, $u(0) = 0$, $\p_y^l u(\infty) = 0$ for $l \ge 1$ which satisfies the bound $\| u \|_{B} \lesssim \|F w_1 \|$.
\end{lemma}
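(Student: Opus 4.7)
Uniqueness is immediate from the assumed a priori estimate (\ref{assume.prior}): applied to the difference of two solutions (equivalently, to a homogeneous solution with $F = 0$), it gives $\|u\|_B \lesssim 0$, forcing $u \equiv 0$.

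For existence, I decompose $\mathcal{L}_\delta = \Theta + K$ with $K u := v_s u'' - u \Delta_\eps v_s$, and rewrite $\mathcal{L}_\delta u = F$ as
\begin{equation*}
(I + \Theta^{-1} K)\, u \,=\, \Theta^{-1} F,
\end{equation*}
using the Step 2 invertibility of $\Theta$. Since the current hypothesis $F w_0 \in L^2$ (so $Fw_1 \in L^2$) is weaker than the polynomial--exponential weights handled by Step 2, I first approximate $F$ by $F_n := F\, \chi(y/n) \in C^\infty_c$, which lies in every weighted class needed by Step 2 and satisfies $\|(F - F_n)w_1\| \to 0$ by dominated convergence.

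For each such $F_n$, I apply the Fredholm alternative in $B$. The key point is that $\Theta^{-1} K : B \to B$ is compact: the operator $K$ involves only two derivatives of $u$ and has smooth coefficients $v_s, \Delta_\eps v_s$ with appropriate decay at $y \to \infty$ (read off from the expansion (\ref{exp.u}) and the construction of Theorem \ref{thm.construct}); meanwhile Step 2 applied to $\Theta^{-1}$ produces a gain of three derivatives with a polynomial weight. A weighted Rellich--Kondrachov argument then yields compactness. Injectivity of $I + \Theta^{-1}K$ is exactly the homogeneous case of (\ref{assume.prior}), so Fredholm gives a unique solution $u_n \in B$ to $\mathcal{L}_\delta u_n = F_n$.

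Finally, (\ref{assume.prior}) provides the uniform bound $\|u_n\|_B \lesssim \|F_n w_1\| \lesssim \|F w_1\|$. A weak-$*$ compactness argument in $B$ extracts a subsequence converging to some $u \in B$, which solves $\mathcal{L}_\delta u = F$ and inherits $\|u\|_B \lesssim \|F w_1\|$.

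\textbf{Main obstacle.} The compactness of $\Theta^{-1}K$ is the technical crux: the $B$-norm carries a nontrivial weight structure (in particular the piece $\eps^{1/4}\|u^0_{yy}\langle y\rangle/\sqrt{v_e}\|$ and the projection onto $u_{\parallel}$), so one must carefully match the weights appearing in the Step 2 bounds for $\Theta^{-1}$ with the decay of $v_s$ and $\Delta_\eps v_s$ at infinity, ensuring that $K$ actually maps $B$ into a space from which $\Theta^{-1}$ returns compactly into $B$.
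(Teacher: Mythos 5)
Your skeleton matches the paper's: decompose $\mathcal{L}_\delta = \Theta + K$ with $K = v_s\p_y^2 - \Delta_\eps v_s$, invoke the Fredholm alternative, supply injectivity of the homogeneous problem and the final bound from the assumed a priori estimate (\ref{assume.prior}). But there is a genuine gap at the point you yourself flag as the crux: you run the Fredholm theory in $B$ and need $\Theta^{-1}K : B \to B$ compact, and this is neither established by your sketch nor plausibly obtainable from a ``weighted Rellich--Kondrachov'' argument alone. The coefficient $v_s$ does \emph{not} decay as $y \uparrow \infty$ (it tends to a nonzero constant, e.g.\ $v_\parallel(\infty) = v^1_e|_{Y=0} > 0$), so the term $v_s u_{yy}$ in $K$ gains no decay over $u_{yy}$, and $\Theta^{-1}$ (which is essentially repeated integration, cf.\ Step 1) gains regularity but no decay either. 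On the unbounded half-line, a gain of derivatives without a gain of decay does not give compactness, and the difficulty is aggravated by the weights in $B$ (the factor $\langle y\rangle$, and $\langle y\rangle/\sqrt{v^1_e}$ which grows when $v^1_e$ decays) and by the $\kappa$/$u_\perp$ projection structure. So the central step of your plan is open, not merely technical bookkeeping.

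The paper avoids this trap by running the Fredholm alternative in the unweighted space $H^2$, where compactness of $\Theta^{-1}K$ is straightforward, and then bridging the mismatch between $H^2$ and $B$ by a bootstrap: a homogeneous $H^2$ solution satisfies $\Theta u = -Ku$, so the Step 2 lemma (arbitrary weighted forcing for $\Theta$) upgrades $u$ to $\mathcal{T}_{m,n}$ for all $m,n$, hence $u \in B$, at which point (\ref{assume.prior}) with $F=0$ forces $u \equiv 0$; the same estimate then yields $\|u\|_B \lesssim \|Fw_1\|$. If you want to salvage your route, the fix is exactly this: do the compactness argument in a space where the weights are trivial and recover membership in $B$ afterwards by bootstrapping through the $\Theta$-lemma, rather than trying to make $\Theta^{-1}K$ compact on $B$ itself. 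Your truncation $F_n = F\chi(y/n)$ and the final weak limit are harmless and compatible with either route, and your uniqueness argument coincides with the paper's.
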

\begin{proof} First, express $\mathcal{L}_\delta u = \Theta u + Ku = F$, where $K = v_s \p_y^2 - \Delta_\eps v_s$. Next, it is straightforward to see $\Theta^{-1}K: H^2 \rightarrow H^2$ is a compact operator. Thus, by the Fredholm alternative we must exhibit uniqueness of the homogeneous problem, that is with $F = 0$. Assume $u \in H^2$ is a solution to $\Theta u = - Ku$. Then by applying the previous lemma, $u \in \mathcal{T}_{m,n}$ for any $m, n$. This in particular implies that $u \in B_{\gamma}$. Our assumed \textit{a-priori} estimate, (\ref{assume.prior}), may then be used to yield uniqueness of the homogeneous solution and also the bound $\|u\|_{B} \lesssim \|F w_1 \|$. 
\end{proof}

\subsubsection*{\normalfont \textit{Step 4: $\delta \downarrow 0$}}

Our estimates are uniform in $\delta$, and thus we can subsequently take the limit as $\delta \downarrow 0$ in the standard manner. We have thus established by combining Steps 1 - 4: 

\begin{proposition}[Existence] \label{prop.ex.2} \normalfont Let $Fw_1 \in L^2$, and let $\delta \ge 0$. Assume the \textit{a-priori} estimate, (\ref{assume.prior}) holds. Then there exists a unique solution to $\mathcal{L}_\delta u^0 = F$, $u^0(0) = \p_y^{l} u^0(\infty) = 0$ for $l \ge 1$ which satisfies the bound $\| u^0\|_{B} \lesssim \|F w_1 \|$. 
\end{proposition}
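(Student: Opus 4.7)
The plan is to package Steps 1--4 into a single statement, with the only new content being the $\delta\downarrow 0$ limit. Since existence and uniform estimates for fixed $\delta>0$ and the Fredholm-alternative step have already been carried out, what remains is a routine compactness argument together with a bookkeeping check that the hypothesis $Fw_1\in L^2$ delivers everything needed.

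First, for each fixed $\delta>0$, I would invoke the lemma of Step 3 to produce a unique $u^0_\delta\in B$ solving $\mathcal{L}_\delta u^0_\delta = F$ with $\|u^0_\delta\|_B \lesssim \|F w_1\|$; the hypothesis of that lemma is $Fw_0\in L^2$, which is automatically implied by $Fw_1\in L^2$ since $w_0\le w_1$ pointwise. Crucially the constant in the assumed a priori estimate (\ref{assume.prior}) is independent of $\delta\ge 0$, so this bound is uniform in $\delta$.

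Next, I would pick $\delta_n\downarrow 0$ and extract a weak-$*$ limit $u^0$ of $u_n:=u^0_{\delta_n}$ in $B$. Weak convergence pushes $\p_{yy} u_n\rightharpoonup \p_{yy} u^0$ and $\p_{yyy} u_n\rightharpoonup \p_{yyy} u^0$ in the appropriate weighted $L^2$ spaces. Pairing $\mathcal{L}_{\delta_n} u_n = F$ against an arbitrary $\phi\in C^\infty_0(\mathbb{R}_+)$ lets every term pass to the limit: the coefficient terms $v_s u_{n,yy}$ and $u_n\Delta_\eps v_s$ go through by weak convergence against smooth compactly supported test functions, while the penalty term $\delta_n u_{n,yy}$ vanishes against $\phi$ by its explicit prefactor combined with the uniform local $L^2$ bound on $u_{n,yy}$ obtained from $\|u_n\|_B$. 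The limit $u^0$ therefore solves $\mathcal{L}_0 u^0 = F$ distributionally, hence strongly by the smoothness of the coefficients.

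Finally, the bound $\|u^0\|_B \lesssim \|F w_1\|$ follows from weak-$*$ lower semicontinuity, using that $\|\cdot\|_B$ is built from $L^2$-type seminorms together with the continuous linear functional $\omega$ defining $\kappa$. Uniqueness at $\delta=0$ is then immediate: the difference of any two solutions satisfies the homogeneous equation in $B$, and the a priori estimate (\ref{assume.prior}) forces it to vanish. The boundary conditions $u^0(0)=0$ and $\p_y^l u^0(\infty)=0$ for $l\ge 1$ transfer from $u^0\in B$ via trace continuity and the $L^2$ decay built into $\|\cdot\|_\Upsilon$. The only real point requiring care is verifying that the projection $u^0 = u_\perp + \kappa u_\parallel$ is stable under weak-$*$ convergence; but because $\omega[\cdot] = (\cdot_{yy}, u_{\parallel yy})$ is a bounded linear functional on $\Upsilon$, the scalars $\kappa_n$ converge numerically and $u_{\perp,n}\rightharpoonup u_\perp$ in $\Upsilon$, so this poses no genuine obstruction. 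The main conceptual obstacle was already absorbed into the assumed a priori estimate, whose proof occupies the rest of Section \ref{Section.1}.
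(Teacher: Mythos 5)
Your proposal is correct and follows essentially the same route as the paper: the paper's proof of Proposition \ref{prop.ex.2} is precisely the combination of Steps 1--3 (explicit solution of $\Theta u=F$, removal of the compact-support hypothesis, and the Fredholm/compact-perturbation argument using the assumed a-priori bound) followed by the $\delta\downarrow 0$ limit, which the paper dispatches as ``the standard manner'' and which you simply carry out in detail (uniform-in-$\delta$ bound, weak-$*$ extraction in $B$, passage to the limit against test functions with the $\delta_n u_{n,yy}$ term vanishing, lower semicontinuity, and uniqueness from (\ref{assume.prior})). Your added remarks on the stability of the decomposition $u^0=u_\perp+\kappa u_\parallel$ under weak limits and on $w_0\le w_1$ are consistent with the paper's setup and introduce no deviation.
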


\subsection{Coercivity of $\mathcal{L}_{\parallel}$}

The main proposition here is: 
\begin{proposition} \label{prop.Lvs} \normalfont Let $u_\perp$ be defined as in (\ref{basic.dec}). Then: 
\begin{align} \label{propLbig}
\| \mathcal{L}_{\parallel} u_\perp \cdot \langle y \rangle \| \gtrsim \|u_\perp \|_{\Upsilon}. 
\end{align}
\end{proposition}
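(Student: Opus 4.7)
The plan is to exploit the structural identity that $\mathcal{L}_{\parallel}$ is a total derivative. A short computation, using the Prandtl relation $u_{\parallel yy} = v_{\parallel}u_{\parallel y} - v_{\parallel y}u_{\parallel}$ (obtained by evaluating the leading-order Prandtl equation (\ref{Pr.leading.intro}) at $x=0$ together with the divergence-free identity $\bar{u}^0_{px}|_{x=0} = -v_{\parallel y}$), shows that
\begin{align*}
\mathcal{L}_{\parallel} u = \partial_y\mathcal{M}_{\parallel} u, \qquad \mathcal{M}_{\parallel} u := -u_{yy} + v_{\parallel}u_y - v_{\parallel y}u,
\end{align*}
and that under the substitution $u = \phi\, u_{\parallel}$ one obtains the Rayleigh-type factorization
\begin{align*}
u_{\parallel}\,\mathcal{M}_{\parallel}(\phi\, u_{\parallel}) = -e^{\int_1^y v_{\parallel}}\partial_y\bigl(e^{-\int_1^y v_{\parallel}} u_{\parallel}^2 \phi_y\bigr),
\end{align*}
which is the direct analog of the Rayleigh identity (\ref{rayleigh.quotient}) used for $R[q^{(\eps)}]$. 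In particular, $u_{\parallel}\in\mathrm{Ker}\,\mathcal{M}_{\parallel}\subset\mathrm{Ker}\,\mathcal{L}_{\parallel}$; the second reduction-of-order solution of $\mathcal{M}_{\parallel}\psi=0$ blows up at $y=0$ because $u_{\parallel}(y)\sim y$ there, so the condition $u(0)=0$ together with the decay of $\mathcal{M}_{\parallel}u$ at $y=\infty$ forces $\mathrm{Ker}\,\mathcal{L}_{\parallel}$ to be one-dimensional in the admissible class.

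With this scaffolding in place I would argue in two stages. The first is an energy estimate for $\mathcal{L}_{\parallel}u=f$ obtained by squaring against the weight $\langle y\rangle^2$ and integrating. The skew structure of $-u_{yyy}+v_{\parallel}u_{yy}$ is tailor-made for this: integrating by parts the cross term $-2\int v_{\parallel}u_{yy}u_{yyy}\langle y\rangle^2$ yields $\int v_{\parallel y}u_{yy}^2\langle y\rangle^2 + 2\int v_{\parallel}\,y\,u_{yy}^2$, both non-negative by the Blasius signs $v_{\parallel}\ge 0$, $v_{\parallel y}\ge 0$, and with the would-be boundary term at $y=0$ annihilated by $v_{\parallel}(0)=0$. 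Combined with $\int v_{\parallel}^2 u_{yy}^2\langle y\rangle^2$ from squaring, this controls $\|u_{yy}\langle y\rangle\|$ on $[1,\infty)$, where $v_{\parallel}^2+v_{\parallel y}$ is uniformly positive; on $[0,1]$ the same quantity is recovered from $\|u_{yyy}\langle y\rangle\|$ via $u_{yy}(y) = -\int_y^\infty u_{yyy}$ and Cauchy--Schwarz. The sign-indefinite coupling $-v_{\parallel yy}u$ is handled by integration by parts and absorbed into $\|u\|_{loc}$ using the rapid Gaussian decay of $v_{\parallel yy}$ inherited from $f''$. The subordinate pieces $\|u_y\|_{loc}$ and $\|u\|_{loc}$ of the $\Upsilon$-norm are recovered from $\|u_{yy}\langle y\rangle\|$ via the boundary conditions $u(0)=0$, $u_y(\infty)=0$, and Hardy. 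The outcome is the a priori estimate
\begin{align*}
\|u\|_{\Upsilon} \lesssim \|\mathcal{L}_{\parallel}u\,\langle y\rangle\| + \|u\|_{loc}.
\end{align*}

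The second stage removes the localized term for $u_\perp\in\Upsilon_\perp$ by compactness and contradiction. Suppose the bound fails; normalize to obtain $\{u_n\}\subset\Upsilon_\perp$ with $\|u_n\|_{\Upsilon}=1$ and $\|\mathcal{L}_{\parallel}u_n\langle y\rangle\|\to 0$, so stage one forces $\|u_n\|_{loc}\gtrsim 1$. Weak compactness in $\Upsilon$ and Rellich-type strong convergence in $L^2_{loc}$ produce a limit $u_\infty$ with $\|u_\infty\|_{loc}>0$, $\omega[u_\infty]=0$ (since $\omega$ is bounded on $\Upsilon$), and $\mathcal{L}_{\parallel}u_\infty = 0$ distributionally. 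The one-dimensional kernel characterization then forces $u_\infty = c\,u_{\parallel}$, while $0 = \omega[u_\infty] = c\,\omega[u_{\parallel}]$ with $\omega[u_{\parallel}] > 0$ yields $c=0$, contradicting $\|u_\infty\|_{loc}>0$.

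The principal obstacle is the sign-indefinite lower-order term $-v_{\parallel yy}u$: it spoils the clean skew-adjoint structure that controls the top-order pieces, and must be dealt with by exploiting the rapid decay of $v_{\parallel yy}$ to absorb the resulting coupling into $\|u\|_{loc}$ rather than into the high-order norms on the left. A subsidiary but essential point is the boundary cancellation $v_{\parallel}(0)=0$ from the Blasius profile: without it, integration by parts would leave a positive but unabsorbable contribution $v_{\parallel}(0)u_{yy}(0)^2$ in the cross term.
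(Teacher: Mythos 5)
Your overall architecture is sound and in fact runs parallel to the paper's: both proofs rest on (i) a coercivity statement for the top-order part, (ii) a compactness argument that upgrades it in the presence of the lower-order coupling $-v_{\parallel yy}u$, and (iii) an explicit characterization of $\mathrm{Ker}\,\mathcal{L}_{\parallel}$ by reduction of order (Lemma \ref{propn.span}), with the orthogonality $\omega[\cdot]=0$ killing the $u_{\parallel}$ direction. The genuine differences are in implementation. For (i) you use a direct weighted energy identity whose favorable cross term requires the Blasius signs $v_{\parallel}\ge 0$, $v_{\parallel y}\ge 0$ and $v_{\parallel}(0)=0$; the paper instead writes $v_{\parallel}=v_{\parallel}^{\infty}+(v_{\parallel}-v_{\parallel}^{\infty})$ and treats the variable part as a compact perturbation (Lemmas \ref{compactness.lemma}, \ref{lemma.real.pos}), so it needs only $v_{\parallel}\ge0$ and $v_{\parallel}(\infty)>0$; your route is more elementary but imports the extra hypothesis $v_{\parallel y}\ge0$, which is Blasius-specific and not used by the paper. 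For (iii) your first-integral observation $\mathcal{L}_{\parallel}=\partial_y\mathcal{M}_{\parallel}$, with the integration constant killed by decay at infinity, neatly replaces the paper's elimination of the third kernel element $u^p$ via the exponential growth of $u^p_{yy}$ and the condition $u_{yy}(\infty)=0$ (Lemmas \ref{lemma.asy.1}, \ref{Lemma.cpct.1}); this is a nice simplification. You should also state explicitly that the contradicting sequence is taken in the class with $u_n(0)=0$ (true for $u_\perp$ from (\ref{basic.dec})) and that this condition and $\omega[u_n]=0$ pass to the limit by local strong convergence, since the kernel characterization is applied to $u_\infty$.

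One supporting claim is wrong as stated: the second reduction-of-order solution does \emph{not} blow up at $y=0$. Precisely because $u_{\parallel}(y)\sim y$ there, the prefactor cancels the $-1/y$ divergence of $\int_1^y u_{\parallel}^{-2}e^{\int_1^z v_{\parallel}}\,dz$, and $\tilde{u}_s$ tends to a nonzero constant ($\tilde{u}_s|_{y=0}\sim -1$; this is exactly the first asymptotic in Lemma \ref{lemma.asy.1}). Your argument survives because all it actually needs is $\tilde{u}_s(0)\neq 0$, so that $u(0)=0$ (together with $u_{\parallel}(0)=0$) eliminates the $\tilde{u}_s$ component; but that nonvanishing is a genuine computation, not a consequence of the (false) blow-up, and must be supplied as in Lemma \ref{lemma.asy.1}. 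With that correction, and with the caveat about the additional sign assumption $v_{\parallel y}\ge0$ noted above, the proposal gives a valid proof of (\ref{propLbig}).
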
  

\begin{lemma} \label{propn.span} \normalfont Elements of the three dimensional kernel of $\mathcal{L}_{\parallel}$ can be written as the following linear combination: $c_1 u_{\parallel} + c_2 \tilde{u}_s + c u^p$,  where $c_1, c_2, c \in \mathbb{R}$. Here: 
\begin{align*}
&\tilde{u}_s := u_{\parallel} \int_1^y \frac{u_{\parallel}(1)^2}{u_{\parallel}^2} \exp \Big[ \int_1^z v_{\parallel} \ud w \Big] \ud z, \\
&u^p :=  \tilde{u}_s \int_0^y u_{\parallel} \exp \Big[ -\int_1^z v_{\parallel} \Big]  -u_{\parallel} \int_0^y \tilde{u}_s \exp \Big[ - \int_1^z v_{\parallel} \Big].
\end{align*}
\end{lemma}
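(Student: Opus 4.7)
The plan is to exhibit a hidden factorization of $\mathcal{L}_{\parallel}$ that reduces this third-order problem to a second-order one, then use standard reduction-of-order and variation-of-parameters machinery to enumerate the kernel.

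First I would observe, by direct differentiation, that
\begin{align*}
\mathcal{L}_{\parallel} u = -u''' + v_{\parallel} u'' - v_{\parallel}'' u = -\frac{d}{dy}\bigl( u'' - v_{\parallel} u' + v_{\parallel}' u \bigr),
\end{align*}
so that $u \in \mathrm{Ker}(\mathcal{L}_{\parallel})$ if and only if
\begin{align*}
u'' - v_{\parallel} u' + v_{\parallel}' u = c \quad \text{for some constant } c \in \mathbb{R}.
\end{align*}
This already explains the three-parameter nature of the kernel: two parameters come from the homogeneous second-order ODE (for $c=0$), and the third comes from the free choice of $c$.

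Next I would treat the homogeneous case $c=0$. I would verify that $u_{\parallel}$ solves $u_{\parallel}'' - v_{\parallel} u_{\parallel}' + v_{\parallel}' u_{\parallel} = 0$ by evaluating the leading-order Prandtl equation (\ref{Pr.leading.intro}) at $x=0$: the divergence-free condition gives $u^0_{px}|_{x=0} = -v_{\parallel}'$, and for the shear Euler flow $u^0_e(Y)$ the induced pressure term $P^0_{px}$ vanishes, yielding exactly $u_{\parallel}'' = v_{\parallel} u_{\parallel}' - u_{\parallel} v_{\parallel}'$. Given this first solution, reduction of order through the ansatz $u = u_{\parallel} w$ reduces the homogeneous equation to $u_{\parallel} w'' + (2u_{\parallel}' - v_{\parallel} u_{\parallel}) w' = 0$, whose solution is $w' = \tfrac{C}{u_{\parallel}^2}\exp(\int_1^y v_{\parallel})$. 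Choosing $C = u_{\parallel}(1)^2$ and integrating from $1$ (a point where $u_{\parallel}>0$, avoiding the zero of $u_{\parallel}$ at $y=0$) reproduces $\tilde{u}_s$ exactly as stated.

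For the inhomogeneous case $c \neq 0$, I would apply variation of parameters with the basis $\{u_{\parallel}, \tilde{u}_s\}$. The Wronskian is
\begin{align*}
W = u_{\parallel}\tilde{u}_s' - u_{\parallel}'\tilde{u}_s = u_{\parallel}^2 w' = u_{\parallel}(1)^2 \exp\Bigl(\int_1^y v_{\parallel}\Bigr),
\end{align*}
so Cramer's rule produces coefficients $A' = -\tfrac{c}{u_{\parallel}(1)^2}\,\tilde{u}_s\, e^{-\int_1^y v_{\parallel}}$ and $B' = \tfrac{c}{u_{\parallel}(1)^2}\, u_{\parallel}\, e^{-\int_1^y v_{\parallel}}$. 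Picking $c = u_{\parallel}(1)^2$ and integrating from $0$ to $y$ (which fixes a specific normalization) yields precisely the stated formula for $u^p$.

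Finally, linear independence of $\{u_{\parallel}, \tilde{u}_s, u^p\}$ follows at once from the factorization: applying the second-order operator $u \mapsto u'' - v_{\parallel} u' + v_{\parallel}' u$ sends $u_{\parallel}, \tilde{u}_s \mapsto 0$ but $u^p \mapsto u_{\parallel}(1)^2 \neq 0$, so $u^p \notin \mathrm{span}\{u_{\parallel}, \tilde{u}_s\}$; independence of $u_{\parallel}$ and $\tilde{u}_s$ themselves comes from Wronskian nonvanishing (or, equivalently, the distinct boundary behaviors at $y=0$, where $u_{\parallel}$ vanishes linearly while $\tilde{u}_s$ does not). A dimension count then confirms these three functions span the kernel. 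The only nontrivial step is spotting the factorization; once it is in hand the remainder is routine ODE theory, and the formulas in the statement are simply what the reduction-of-order and variation-of-parameters procedures spit out.
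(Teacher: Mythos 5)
Your proof is correct and follows essentially the same route as the paper: the factorization $\mathcal{L}_{\parallel}u = \partial_y\{-u''+v_{\parallel}u'-v_{\parallel}'u\}$ is exactly the paper's ``integrating up once'' to the second-order operator $\mathcal{L}_{\parallel}^1$, after which $u_{\parallel}$ (via the Prandtl equation at $x=0$), reduction of order for $\tilde{u}_s$, and variation of parameters for $u^p$ reproduce the stated kernel. You supply somewhat more detail than the paper (the explicit derivation of $u^p$ and the independence check), but the argument is the same.
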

\begin{proof} Integrating up once, we define the operator $\mathcal{L}_{\parallel}^1 := -u_{yy} + v_{\parallel} u_y - u   v_{\parallel y}$. One solution to the homogeneous equation, $\mathcal{L}_{\parallel}^1 u = 0$, is $u_{\parallel}$. By supposing the second spanning solution is of the form $\tilde{u}_s := u_{\parallel} a(y)$, we may derive the equation: $a''(y) = \Big[ v_{\parallel} - 2\frac{u^0_{\parallel y}}{u_{\parallel}} \Big] a'(y)$. Solving this equation gives one solution:
\begin{align} \label{a.def}
a'(y) = \frac{u_{\parallel}(1)^2}{u_{\parallel}^2} \exp \Big[ \int_1^y v_{\parallel} \Big], \hspace{3 mm} a(y) = \int_1^y \frac{u_{\parallel}(1)^2}{u_{\parallel}^2} \exp \Big[ \int_1^z v_{\parallel} \ud w\Big] \ud z. 
\end{align}
 
\end{proof}

We shall need asymptotic information about $\tilde{u}_s$:
\begin{lemma} \label{lemma.asy.1} \normalfont As defined in Lemma \ref{propn.span}, $\tilde{u}_s$ satisfies the following asymptotics: 
\begin{align} 
\begin{aligned}
&\tilde{u}_s|_{y = 0} \sim -1 \text{ and }\tilde{u}_{sy} \sim 1 \text{ as } y \downarrow 0, \\
& \tilde{u}_{sy},  \tilde{u}_{sy}, \tilde{u}_s  \sim \exp[v_\parallel(\infty) y] \text{ as } y \uparrow \infty. 
\end{aligned}
\end{align}
\end{lemma}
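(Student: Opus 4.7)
The plan is to exploit the explicit formula $\tilde{u}_s = u_\parallel \cdot a(y)$ provided by Lemma \ref{propn.span}, and to analyze the two regimes $y \downarrow 0$ and $y \uparrow \infty$ separately. All information needed comes from the following elementary facts about the Blasius data at $x = 0$: (i) $u_\parallel(0) = 0$ with $u_{\parallel y}(0) = \alpha > 0$; (ii) $v_\parallel(y) = O(y^2)$ near $y = 0$ (since the Blasius expansion gives $\eta f'(\eta) - f(\eta) \sim \tfrac{1}{2} f''(0)\eta^2$); (iii) $u_\parallel(y) \to 1$ and $u_{\parallel y}(y) \to 0$ (Gaussian rate) as $y \uparrow \infty$; and (iv) $v_\parallel(y) \to v_\parallel(\infty) > 0$, by \eqref{sign.blasius}.

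\textbf{Near $y = 0$.} Since $u_\parallel(z) = \alpha z + \beta z^2 + O(z^3)$ and $\int_1^z v_\parallel\, \ud w$ is smooth and bounded at $z = 0$, the integrand defining $a$ satisfies $\frac{u_\parallel(1)^2}{u_\parallel(z)^2}\exp[\int_1^z v_\parallel\, \ud w] = \frac{C_0}{z^2}(1 + O(z))$ with $C_0 > 0$. Integrating from $1$ downward gives
$$a(y) = -\frac{C_0}{y} + C_1 + O(y), \qquad a'(y) = \frac{C_0}{y^2} + O\!\left(\frac{1}{y}\right).$$
Multiplying by $u_\parallel(y) = \alpha y + O(y^2)$ absorbs the $1/y$ singularity in $a$ and produces the finite limit $\tilde{u}_s(0) = -\alpha C_0 < 0$, confirming $\tilde{u}_s|_{y = 0} \sim -1$. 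For the first derivative, $\tilde{u}_{sy} = u_{\parallel y} a + u_\parallel a'$ is a sum of two individually $O(1/y)$-singular terms whose leading parts cancel; the residual value is of order $1$ and positive.

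\textbf{As $y \uparrow \infty$.} Substituting (iii)--(iv) into the integrand of $a$ yields, for large $y$,
$$\frac{u_\parallel(1)^2}{u_\parallel(y)^2}\exp\!\Bigl[\int_1^y v_\parallel\, \ud w\Bigr] = u_\parallel(1)^2(1 + o(1))\, e^{v_\parallel(\infty)(y-1) + o(y)},$$
so $a'(y) \sim C\, e^{v_\parallel(\infty) y}$, and integrating gives $a(y) \sim \frac{C}{v_\parallel(\infty)} e^{v_\parallel(\infty) y}$. Because $u_\parallel \to 1$, this immediately yields $\tilde{u}_s \sim e^{v_\parallel(\infty) y}$. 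Since $u_{\parallel y}$ decays faster than any polynomial, the first summand in $\tilde{u}_{sy} = u_{\parallel y} a + u_\parallel a'$ is negligible compared to $u_\parallel a' \sim v_\parallel(\infty) e^{v_\parallel(\infty) y}$, giving the stated rate. The same rate for $\tilde{u}_{syy}$ (if this is what is meant by the repeated $\tilde{u}_{sy}$ in the statement) then follows instantly from the ODE $\tilde{u}_{syy} = v_\parallel \tilde{u}_{sy} - v_{\parallel y}\tilde{u}_s$, which is just $\mathcal{L}_\parallel^1 \tilde{u}_s = 0$.

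\textbf{Main obstacle.} The only delicate point is verifying finiteness of $\tilde{u}_{sy}(0)$, where both pieces of $\tilde{u}_{sy} = u_{\parallel y} a + u_\parallel a'$ individually diverge like $1/y$. Direct Taylor bookkeeping succeeds but is error-prone. A cleaner route is the following ODE-regularity argument: once the continuous extension $\tilde{u}_s(0) = -\alpha C_0$ is known, $\tilde{u}_s$ satisfies a second-order linear ODE with $C^\infty$ coefficients on $[0,\infty)$ and finite value at $y=0$, hence is itself $C^\infty$ at the origin, forcing $\tilde{u}_{sy}(0)$ to be finite. Its sign and magnitude are then confirmed by reading off the leading coefficient from the expansion of $a$. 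The far-field analysis uses only elementary limits of smooth functions and presents no essential difficulty.
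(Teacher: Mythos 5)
Your far-field analysis and your computation of $\tilde u_s|_{y=0}$ follow essentially the paper's route (expand $g(y)=\exp[\int_1^y v_\parallel]$, use the rapid decay of $v_\parallel-v_\parallel(\infty)$, and let the simple zero of $u_\parallel$ absorb the $-C_0/y$ singularity of $a$), and using the ODE $\tilde u_{syy}=v_\parallel\tilde u_{sy}-v_{\parallel y}\tilde u_s$ for the second-derivative growth at infinity is a harmless variant of the paper's direct differentiation. The genuine issue is $\tilde u_{sy}$ near $y=0$. From the expansion you actually assume, $u_\parallel(z)=\alpha z+\beta z^2+O(z^3)$, the integrand of $a$ is only $\frac{C_0}{z^2}(1+O(z))=\frac{C_0}{z^2}+O(z^{-1})$, so integration gives $a(y)=-\frac{C_0}{y}+c\log y+C_1+o(1)$ in general, not $-\frac{C_0}{y}+C_1+O(y)$; the $1/y$ parts of $u_{\parallel y}a$ and $u_\parallel a'$ cancel, but the $\log y$ term survives in $u_{\parallel y}a$, so the claim that "the residual value is of order $1$" does not follow from your hypotheses. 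What kills the logarithm is the structural fact, used explicitly in the paper and recorded in (\ref{prof.pick}), that $u_{\parallel yy}(0)=u_{\parallel yyy}(0)=0$ (obtained by evaluating the Prandtl equation at $y=0$), together with $v_\parallel(0)=v_{\parallel y}(0)=0$; this yields $\frac{1}{u_\parallel(z)^2}=\frac{1}{u_{\parallel y}(0)^2 z^2}+\bigO(z)$ and then $\int_1^y g(z)z^{-2}\ud z\sim-\frac{g(y)}{y}$, which is precisely the step your bookkeeping omits.

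Your fallback ODE-regularity argument is a genuinely different, and essentially sound, way to obtain finiteness of $\tilde u_{sy}(0)$, but it should be justified correctly: it is not that continuity of $\tilde u_s$ at $y=0$ plus smooth coefficients forces smoothness. Rather, since $\mathcal{L}_{\parallel}^1$ is a nondegenerate second-order operator with coefficients smooth up to $y=0$, the solution space on $(0,\delta)$ is spanned by restrictions of solutions of initial value problems posed at $y=0$, so every solution on $(0,\infty)$ --- in particular $\tilde u_s$ --- extends $C^\infty$ to $[0,\infty)$; no continuity hypothesis is needed. This buys finiteness (indeed smoothness) cheaply, and it is a legitimate shortcut not taken in the paper. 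However, it does not by itself give the sign and order-one magnitude asserted in the lemma ($\tilde u_{sy}\sim 1$ as $y\downarrow 0$): to "read off the leading coefficient" you are sent back to the refined expansion above, i.e., to the paper's cancellation computation. As written, then, the near-origin derivative asymptotics are not fully established; either import (\ref{prof.pick}) and carry out the bookkeeping as the paper does, or supplement the ODE argument with an explicit evaluation of $\lim_{y\downarrow 0}\{u_{\parallel y}a+u_\parallel a'\}$.
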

\begin{proof} For convenience, denote 
\begin{align*}
g(y) = \exp [ \int_1^y v_{\parallel} ]. 
\end{align*}

\noindent  By rewriting $v_{\parallel} = v_{\parallel}(\infty) + [v_{\parallel} - v_{\parallel}(\infty)]$, and using that the latter difference decays rapidly, we obtain the basic asymptotics $g \sim \exp[ v_{\parallel}(\infty) y]$ as $y \uparrow \infty$. An expansion of $a$, given in (\ref{a.def}), near $y = 0$ gives $a(y) \approx \int_1^y \frac{1}{z^2} \ud z \sim -\frac{1}{z}|_{1}^y  = 1-\frac{1}{y}$. Thus: $\tilde{u}_s|_{y = 0} \sim u_{\parallel} [1 - \frac{1}{y} ] \sim -1$. At $y = \infty$, we have the asymptotics:
\begin{align*}
 \tilde{u}_s = u_\parallel \int_1^y \frac{u_\parallel(1)^2}{u_\parallel^2} g(z)  \sim \int_1^y \exp[v_{\parallel}^\infty  y] \ud z \sim \exp[v_{\parallel}^\infty y]. 
\end{align*}

\noindent  We now differentiate to obtain 
\begin{align*}
\tilde{u}_{sy} = u_{\parallel y}a + u_\parallel a'(y) = u_{\parallel y}a(y) + \frac{u_\parallel (1)^2}{u_\parallel} g(y) \sim \exp[v_{\parallel}^\infty y].
\end{align*}

\noindent  To evaluate $\tilde{u}_{sy}$ at $y = 0$, we need more precision. Expansions give: 
\begin{align*}
&u_{\parallel} a'(y) = \frac{u_\parallel(1)^2}{u_s} g(y) \sim \frac{u_\parallel(1)^2}{u_{\parallel y}(0)y} g(y) \text{ for }y \sim 0, \text{ and } \\
&u_{\parallel y}a(y) \sim u_{\parallel y}(0) u_\parallel(1)^2 \int_1^y \frac{1}{u_\parallel^2} g(z)  \sim \frac{u_\parallel(1)^2}{u_{\parallel y}(0)} \int_1^y \frac{g(z)}{z^2} \ud z. 
\end{align*}

\noindent  We have used the fact that $\frac{1}{u_\parallel^2}$ does not contribute a factor of $\frac{1}{z}$ following the singularity of $\frac{1}{z^2}$. Indeed, Taylor expanding, using that $u_{\parallel yy}(0) = u_{\parallel yyy}(0) = 0$ (see the first identity in (\ref{prof.pick})), and the elementary identity for any $a, b \in \mathbb{R}$, $\frac{1}{a-b} - \frac{1}{a} = \frac{b}{a(a-b)}$, one obtains: 
\begin{align*}
\frac{1}{u_\parallel(z)^2} =  \frac{1}{u_{\parallel y}(0) z^2} + \bigO(z). 
\end{align*}

It remains to show $\int_1^y \frac{g(z)}{z^2} \ud z \sim -\frac{g(y)}{y}$. We decompose the integral into region $[1, y_\ast]$ and $[y_\ast, y]$ for $0 < y \le z \le y_\ast$. The $[1, y_\ast]$ integral contributes an $\bigO(1)$ constant. In the $[y_\ast, y]$  region, the Taylor expansion is valid: 
\begin{align*}
\int_{y_\ast}^y \frac{g(z)}{z^2} \ud z \sim \int_{y_\ast}^y \frac{g(y)}{z^2} \ud z + g'(y)\int_{y_\ast}^y \frac{z-y}{z^2} \ud z \sim g(y) [\frac{1}{y_\ast} - \frac{1}{y} ] + g'(y) \phi(y), 
\end{align*}

\noindent  where $| \phi(y)| \lesssim |\log y|$. We now use that $v_{\parallel}(0) = v_{\parallel}'(0) = 0$ and $g'(y) = v_\parallel (y) g(y)$ to show that $g'(y) \sim y^2$. Thus, $g'(y) \phi(y)$ vanishes as $y \rightarrow 0$. We thus have verified that $I(y) \sim -\frac{g(y)}{y}$. 

We now compute two derivatives: 
\begin{align*}
\tilde{u}_{syy} &= u_{\parallel yy}a + 2u_{\parallel y}a'(y) + u_\parallel a''(y) \\
& \sim a''(y) \sim \p_{y} \{ \frac{1}{u_\parallel^2} \exp[v_\parallel^\infty y] \} \sim \exp[v_{\parallel}^\infty y] \text{ as } y \uparrow \infty.
\end{align*}
\end{proof}

We now establish a general compactness lemma:
\begin{lemma} \label{compactness.lemma} \normalfont Let $H_D, H$ be Hilbert spaces, with inner-product $(\cdot , \cdot)_D$ and $(\cdot, \cdot)$ and associated norms, $\|\cdot\|_D$, $\| \cdot \|$. Let $T: H_D \rightarrow H$ be a bounded operator that is bounded below: $\|T x \| \ge C_1 \|x \|_D$. Let $T_c: H_D \rightarrow H$ be a compact operator. Assume $\text{Ker}(T+T_c) = \{0\}$. Then there exists a $C_0 > 0$ such that 
\begin{align*}
\|\{T + T_c\} x \| \ge C_0 (\|T x \| + \| T_c x \|).
\end{align*}
\end{lemma}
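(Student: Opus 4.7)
The plan is to prove the lemma by a standard contradiction/compactness argument, leveraging the lower bound on $T$ to upgrade convergence from $H$ to $H_D$.

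Suppose for contradiction that no such $C_0>0$ exists. Then I can extract a sequence $\{x_n\}\subset H_D$ with
\[
\|T x_n\| + \|T_c x_n\| = 1, \qquad \|(T+T_c)x_n\| \longrightarrow 0.
\]
The first step is to observe that the sequence is bounded in $H_D$: since $\|T x_n\|\le 1$ and $T$ is bounded below, $\|x_n\|_D \le C_1^{-1}\|T x_n\| \le C_1^{-1}$. Consequently the compactness of $T_c:H_D\to H$ yields (after passing to a subsequence) a limit $T_c x_n \to y$ strongly in $H$. Since $(T+T_c)x_n\to 0$, we also get $T x_n \to -y$ in $H$.

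Next I would upgrade convergence of $\{x_n\}$ itself to the $H_D$ norm. Applying the lower bound to the difference $x_n-x_m$,
\[
\|x_n - x_m\|_D \le C_1^{-1}\|T(x_n-x_m)\|,
\]
and the right-hand side tends to zero since $\{T x_n\}$ is Cauchy in $H$. Hence $\{x_n\}$ is Cauchy in $H_D$ and converges to some $x\in H_D$. By boundedness of $T$ and $T_c$, continuity gives $T x = -y$ and $T_c x = y$, so $(T+T_c)x = 0$. The kernel hypothesis forces $x=0$, so $T x_n\to 0$ and $T_c x_n\to 0$ in $H$, contradicting the normalization $\|T x_n\|+\|T_c x_n\|=1$.

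I do not anticipate a genuine obstacle here; the argument is the classical Fredholm-style compactness scheme, and the only subtle point is to ensure that the two pieces $T x_n$ and $T_c x_n$ are handled separately (rather than only estimating $\|x_n\|_D$), which is exactly what the conclusion $\|(T+T_c)x\|\gtrsim \|T x\|+\|T_c x\|$ demands. The lower bound on $T$ is what allows both the boundedness of $\{x_n\}$ in $H_D$ (so that compactness of $T_c$ applies) and the Cauchy upgrade from $H$ to $H_D$ (so that the limit $x$ actually lies in $H_D$ and $(T+T_c)x=0$ makes sense); without it the argument would collapse.
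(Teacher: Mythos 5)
Your argument is correct, and it reaches the conclusion by a cleaner route than the paper's. Both proofs start identically: negate the inequality, normalize so that $\|Tx_n\|+\|T_cx_n\|=1$ while $\|(T+T_c)x_n\|\to 0$, and use the lower bound $\|Tx\|\ge C_1\|x\|_D$ to get boundedness of $\{x_n\}$ in $H_D$ so that compactness of $T_c$ applies. The divergence is in how the limit is handled. The paper only extracts a \emph{weak} limit $x_n\rightharpoonup x_\ast$ in $H_D$, and must then combine weak lower semicontinuity with a polarization argument on the cross term $(Tx_n,T_cx_n)$ to transfer both the vanishing of $\|(T+T_c)x_n\|$ and the normalization to $x_\ast$, concluding that $x_\ast$ is a nonzero element of $\mathrm{Ker}(T+T_c)$. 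You instead exploit the lower bound a second time, applied to differences: $\|x_n-x_m\|_D\le C_1^{-1}\|Tx_n-Tx_m\|$, and since $T_cx_n\to y$ strongly and $(T+T_c)x_n\to 0$ force $Tx_n\to -y$ strongly, $\{x_n\}$ is Cauchy and converges \emph{strongly} in $H_D$; continuity of $T$ and $T_c$ then gives $(T+T_c)x=0$, hence $x=0$, hence $Tx_n\to 0$ and $T_cx_n\to 0$, contradicting the normalization directly. This buys you a shorter ending with no weak-convergence or polarization bookkeeping; the paper's weak-limit formulation is the more standard template (and would survive if the lower bound on $T$ were replaced by a weaker coercivity giving only boundedness), but under the hypotheses as stated your strong-convergence shortcut is fully justified and equivalent in strength.
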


\begin{proof} We will assume that $\text{Ker}(T+T_c) = \{0\}$, and assume that the lower bound of the lemma does not hold. In this case, for each $k > 0$, we may find a sequence $\{x_n\} \in H$ which satisfies: 
\begin{align} \label{asu.1}
&\|Tx_n\|^2 + \|T_c x_n \|^2 = 1 \text{ and } \| \{T + T_c\}  x_n \|^2 = \frac{1}{n}.
\end{align}

\noindent  As $T$ is bounded below, $\{ x_n \}$ itself is a bounded sequence in $H_D$, and thus there exists a limit point, $x_\ast \in H_D$ such that, upon passing to a subsequence and immediately reindexing, $x_n \rightharpoonup x_\ast$ weakly in $H_D$. This also implies that $Tx_n \rightharpoonup T x_\ast$ and $\| T_c x_n - T_c x_\ast \| \rightarrow 0$. 

We will establish $\|\{T + T_c\} x_\ast\| = 0$, but $\|T x_\ast \| + \| T_c x_\ast \| = 1$, thereby contradicting the triviality of $\text{Ker}(T+T_c)$. For the first step, we combine weak lower-semicontinuity with a polarization argument. 
\begin{align} \n
0 \le \|T x_\ast +T_c x_\ast \|^2 &= \|T x_\ast\|^2 + \|T_c x_\ast \|^2 + 2(T x_\ast, T_c x_\ast) \\ \n
& \le \|T x_n \|^2 +\| T_c x_n \|^2 + 2(T x_n, T_c x_n) + \mathcal{P}_n \\ \n
& = \|T x_n + T_c x_n \|^2 + \mathcal{P}_n \\ \n
& \le \frac{1}{n} + |\mathcal{P}_n|.
\end{align}

\noindent  Here, $\mathcal{P}_n := (T x_n, T_c x_\ast - T_c x_n) + (T x_\ast - T x_n, T_c x_\ast)$ is a result of polarizing, and vanishes as $n \rightarrow \infty$: 
\begin{align*}
(T x_\ast, T_c x_\ast) = &(Tx_n, T_c x_\ast) + (T x_\ast - T x_n, T_c x_\ast) \\
= & (T x_n, T_c x_n) + (T x_n, T_c x_\ast - T_c x_n) + (T x_\ast - T x_n, T_cx_\ast) \\
= & (T x_n , T_c x_n) + \mathcal{P}_n,
\end{align*}

\noindent For the first term in $\mathcal{P}_k$, we use strong convergence of $\{T_c x_n\}$: 
\begin{align*}
|(T x_n, T_c x_\ast - T_c x_n)| \le \| T x_n \| \| T_c x_\ast - T_c x_n \| \rightarrow 0. 
\end{align*}

\noindent For the second term in $\mathcal{P}_k$, we use weak convergence of $\{T x_n \}$: 
\begin{align*}
(Tx_\ast - T x_n, T_c x_\ast) \rightarrow 0. 
\end{align*}

\noindent Thus $\|\{T + T_c\} x_\ast \| = 0$. 

By squaring, we see that to establish $\|T x_\ast\|^2 + \|T_c x_\ast \|^2 = 1$, it suffices to establish $-2(T x_\ast, T_c x_\ast) = 1$. The two equations in (\ref{asu.1}) imply that $-2(T x_n, T_c x_n) = 1 + o(\frac{1}{n})$. Again a polarization argument establishes that we may pass to the limit in the cross term to obtain: $-2(T x_n, T_c x_n) \rightarrow -2(Tx_\ast, T_c x_\ast)$. Applying the implication of the lemma to $x_\ast$ gives the desired contradiction. 

\end{proof}

We will now apply Lemma \ref{compactness.lemma} twice, starting with:

\begin{lemma} \normalfont \label{lemma.real.pos} Assume $\|h \|_{\Upsilon} < \infty$. Then: 
\begin{align*}
\| [- h_{yyy} + v_{\parallel} h_{yy}]  \langle y \rangle \|^2 \gtrsim \|h_{yyy} \langle y \rangle\|^2 + \|h_{yy} \langle y \rangle\|^2.
\end{align*} 
\end{lemma}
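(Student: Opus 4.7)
The plan is to reduce the inequality to a first-order estimate on $g := h_{yy}$ and then invoke Lemma \ref{compactness.lemma}. Setting $g = h_{yy}$, the claim becomes
\[
 \| (-g_y + v_{\parallel} g) \langle y \rangle \|^2 \gtrsim \| g_y \langle y \rangle \|^2 + \| g \langle y \rangle \|^2.
\]
The hypothesis $\|h\|_{\Upsilon} < \infty$ immediately gives $g\langle y\rangle, g_y\langle y\rangle \in L^2(\mathbb{R}_+)$, and a short computation shows $g\langle y\rangle \in H^1$, so by Sobolev embedding $g(y)\langle y\rangle \to 0$ as $y \uparrow \infty$; in particular $g(\infty) = 0$.

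To set up the compactness lemma, introduce $v_{\parallel}^\infty := \lim_{y \to \infty} v_{\parallel}(y)$, which is a strictly positive constant by (\ref{sign.blasius}), and decompose the first-order operator as $-g_y + v_{\parallel} g = T g + T_c g$ with $Tg := -g_y + v_{\parallel}^\infty g$ (constant coefficients) and $T_c g := (v_{\parallel} - v_{\parallel}^\infty) g$ (multiplication by a rapidly decaying function). Take $H := L^2(\langle y\rangle^2 \, \ud y)$ and let $H_D$ be the completion of compactly supported $g$ with $g(\infty)=0$ under $\|g\|_{H_D}^2 := \|g_y \langle y \rangle\|^2 + \|g \langle y\rangle\|^2$.

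To verify the hypotheses of Lemma \ref{compactness.lemma}: boundedness below of $T$ follows by expanding $\|Tg \cdot \langle y\rangle\|^2 = \|g_y\langle y\rangle\|^2 + (v_{\parallel}^\infty)^2 \|g\langle y\rangle\|^2 - 2 v_{\parallel}^\infty \int g g_y \langle y \rangle^2 \, \ud y$ and integrating by parts the cross term; the boundary contribution at infinity vanishes thanks to $g\langle y\rangle \to 0$, while the remaining terms $v_{\parallel}^\infty g(0)^2 + 2v_{\parallel}^\infty \int g^2 y \, \ud y$ are nonnegative, giving $\|Tg \cdot \langle y\rangle\|^2 \gtrsim \|g\|_{H_D}^2$. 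Compactness of $T_c:H_D \to H$ follows from a standard tail-plus-Rellich argument: the rapid decay of $v_{\parallel} - v_{\parallel}^\infty$ makes the tail $\|T_c g_n \langle y\rangle\|_{L^2[R,\infty)}$ uniformly small for bounded $\{g_n\} \subset H_D$, while on $[0, R]$ Rellich–Kondrachov extracts a subsequence convergent in $L^2([0,R])$. Triviality of $\text{Ker}(T+T_c)$ follows since $-g_y + v_{\parallel} g = 0$ forces $g = c \exp(\int_0^y v_{\parallel})$, which grows like $e^{v_{\parallel}^\infty y}$ and is inconsistent with $g\langle y\rangle \in L^2$ unless $c = 0$. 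Lemma \ref{compactness.lemma} then yields $\|(T+T_c)g \cdot \langle y\rangle\|_H \gtrsim \|T g\|_H \gtrsim \|g\|_{H_D}$, which is precisely the desired inequality.

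The most delicate point is the compactness of $T_c$, which relies on quantitative decay of $v_{\parallel}(y) - v_{\parallel}^\infty$; for the Blasius profile this is inherited from the Gaussian tail of $f''$ noted after (\ref{blasius.ODE}), and for more general profiles from the Prandtl-layer structure of $v_{\parallel}$ at $x = 0$. The main technical comfort is that the splitting $v_{\parallel} = v_{\parallel}^\infty + (v_{\parallel} - v_{\parallel}^\infty)$ cleanly isolates a coercive constant-coefficient part from a rapidly decaying (hence compact) perturbation — exactly the structure Lemma \ref{compactness.lemma} is designed to exploit.
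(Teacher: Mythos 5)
Your proof is correct and follows essentially the same route as the paper: the same splitting $v_{\parallel} = v_{\parallel}^\infty + (v_{\parallel} - v_{\parallel}^\infty)$ into a constant-coefficient coercive operator plus a compact, rapidly-decaying multiplication, the same application of Lemma \ref{compactness.lemma}, the same integration by parts for the lower bound on $T$, and the same ODE argument ruling out the kernel. The substitution $g = h_{yy}$, which turns the paper's third-order operator into a first-order one, is a purely cosmetic repackaging (and has the minor benefit of making $H_D$ a genuine norm rather than a seminorm).
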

\begin{proof} Define $H = L^2(\langle y \rangle)$ and $H_D$ via the norm: $\| h \|_{D} := \|h_{yyy} \langle y \rangle \| + \|h_{yy} \langle y \rangle \|$. We take $Th = - h_{yyy} + v_{\parallel}^\infty h_{yy}$ and $T_ch = (v_{\parallel} - v_{\parallel}^\infty) h_{yy}$. A standard argument shows that $T_c(H_D) \hookrightarrow \hookrightarrow L^2(\langle y \rangle)$ and thus $T_c$ is a compact operator. Thus to establish the inequality, we must rule out elements of $\text{Ker}(T+T_c)$, which are zero solutions to $h''' = v_\parallel h''$. An explicit integration yields all solutions are of the form $h'' = C_0 \exp[ \int_0^y v_\parallel]$. Since $v_\parallel \ge 0$, this is in contradiction to $h''(\infty) = 0$. To conclude, we establish positivity of $T$, which follows by a straightforward integration by parts: 
\begin{align*}
\|Th \langle y \rangle \|^2 = & \| h_{yyy} \langle y \rangle \|^2 + |v_\parallel^\infty|^2 \| h_{yy} \langle y \rangle \|^2 - 2(h_{yyy}, v_{\parallel}^\infty h_{yy} \langle y \rangle^2) \\
= & \| h_{yyy} \langle y \rangle \|^2 + |v_\parallel^\infty|^2 \| h_{yy} \langle y \rangle \|^2 + 2 v_\parallel^\infty \| h_{yy} \langle y \rangle^{\frac{1}{2}} \|^2 + h_{yy}^2 v^\infty_{\parallel}|_{y = 0}.
\end{align*}
\end{proof}

\begin{lemma} \label{Lemma.cpct.1} \normalfont Let $u_\perp \in \Upsilon_\perp$. The following coercivity estimate holds:
\begin{align} \label{cont.1}
\|\mathcal{L}_{\parallel}[u_\perp] \cdot \langle y \rangle\|^2 \gtrsim \|[- u_{\perp yyy} + v_{\parallel} u_{\perp yy}] \langle y \rangle \|^2 + \|u_\perp v_{\parallel yy} \langle y \rangle\|^2.
\end{align}
\end{lemma}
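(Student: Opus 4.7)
The plan is to reduce Lemma \ref{Lemma.cpct.1} to the abstract principle Lemma \ref{compactness.lemma} applied to the splitting $\mathcal{L}_{\parallel} = T + T_c$, where $Th := -h_{yyy} + v_{\parallel} h_{yy}$ and $T_c h := -h v_{\parallel yy}$. By Lemma \ref{lemma.real.pos}, the operator $T$ already satisfies the required quantitative lower bound
\[
\|Th \cdot \langle y \rangle\|^2 \gtrsim \|h_{yyy}\langle y \rangle\|^2 + \|h_{yy}\langle y \rangle\|^2,
\]
so I would take this as the $\|\cdot\|_D$ norm on $H_D$, with $H_D \subset \Upsilon_\perp$ cut out by the natural boundary conditions $h(0) = 0$, $\p_y h(\infty) = \p_{yy}h(\infty) = 0$ that turn the seminorm into a genuine norm, and with ambient space $H = L^2(\langle y\rangle^2 \,\mathrm dy)$.

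Next I would check that $T_c : H_D \to H$ is compact. Because $v_{\parallel yy}$ is built from the Blasius profile (whose second derivative has a Gaussian tail), multiplication by $v_{\parallel yy}\langle y \rangle$ is effectively a multiplication by a rapidly decaying weight, and sends bounded sequences in $H_D$ to sequences that are precompact in $L^2(\langle y\rangle^2\,\mathrm dy)$. Quantitatively, boundedness of $\|h_{yy}\langle y \rangle\| + \|h_{yyy}\langle y \rangle\|$ plus the boundary conditions gives local $H^2$ bounds on $h$, so by a standard Rellich argument one extracts a subsequence converging in $L^2_{loc}$; the Gaussian decay of $v_{\parallel yy}$ then upgrades this to convergence in $L^2(\langle y\rangle^2\,\mathrm dy)$.

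The main obstacle, and the substantive step, is to verify that $\mathrm{Ker}(T + T_c)\cap H_D = \{0\}$. By Lemma \ref{propn.span}, every homogeneous solution has the form $c_1 u_{\parallel} + c_2 \tilde u_s + c u^p$, and by Lemma \ref{lemma.asy.1} the functions $\tilde u_s$ and $\tilde u_{syy}$ both grow like $\exp[v_{\parallel}(\infty) y]$ as $y \uparrow \infty$. This exponential growth is inherited by $u^p$, whose definition contains $\tilde u_s$ as a prefactor multiplying an integral that grows only polynomially; in particular $\|\tilde u_{syy}\langle y\rangle\|$ and $\|u^p_{yy}\langle y\rangle\|$ are both infinite, so neither $\tilde u_s$ nor $u^p$ belongs to $\Upsilon$. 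Consequently the kernel of $\mathcal{L}_\parallel$ inside $\Upsilon$ is one-dimensional, spanned by $u_\parallel$; but $\omega[u_\parallel] = \|u_{\parallel yy}\|^2 > 0$, so $u_\parallel \notin \Upsilon_\perp$, and the kernel on $H_D$ is trivial as required.

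With these three ingredients in place, Lemma \ref{compactness.lemma} yields
\[
\|\mathcal{L}_\parallel u_\perp \cdot \langle y\rangle\| \gtrsim \|Tu_\perp \cdot \langle y\rangle\| + \|T_c u_\perp \cdot \langle y\rangle\|,
\]
and squaring (using $(a+b)^2 \ge a^2 + b^2$ for nonnegative $a, b$) recovers exactly (\ref{cont.1}). I expect the verification of the growth of $\tilde u_s$ and $u^p$, and in particular the incompatibility with the $\Upsilon$ norm, to be the only point requiring any real care; the positivity and compactness ingredients are already packaged by Lemmas \ref{lemma.real.pos} and \ref{compactness.lemma}.
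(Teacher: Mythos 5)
Your overall architecture is the same as the paper's: split $\mathcal{L}_{\parallel} = T + T_c$ with $Th = -h_{yyy} + v_{\parallel}h_{yy}$ and $T_c h = -h\,v_{\parallel yy}$, invoke Lemma \ref{lemma.real.pos} for the lower bound on $T$, check compactness of $T_c$ (fine, and consistent with the paper's ``standard argument''), and conclude via Lemma \ref{compactness.lemma}. The gap is in the kernel-triviality step. From ``$\tilde u_s \notin \Upsilon$ and $u^p \notin \Upsilon$'' you cannot conclude that the kernel of $\mathcal{L}_{\parallel}$ inside $\Upsilon$ is spanned by $u_{\parallel}$: both $\tilde u_s$ and $u^p$ grow at the \emph{same} rate $\exp[v_{\parallel}(\infty)y]$, so a linear combination can cancel the exponential growth. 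In fact it does: with $A := \int_0^\infty u_{\parallel}\exp[-\int_1^z v_{\parallel}]\ud z$, the combination $u^p - A\,\tilde u_s = -\tilde u_s\int_y^\infty u_{\parallel}\exp[-\int_1^z v_{\parallel}]\ud z - u_{\parallel}\int_0^y \tilde u_s \exp[-\int_1^z v_{\parallel}]\ud z$ has a bounded first piece and a linearly growing second piece, and its second and third derivatives decay rapidly; since the $\Upsilon$ norm only weights $h_{yy}$ and $h_{yyy}$ (with $h, h_y$ measured locally), linear growth is admissible and this combination \emph{does} lie in $\Upsilon$. So your intermediate claim that the kernel inside $\Upsilon$ is one-dimensional is not just unproven but false, and the argument as written does not give $\mathrm{Ker}(T+T_c)\cap H_D = \{0\}$.

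The missing ingredient is the boundary condition $h(0)=0$, which you built into $H_D$ but never used where it matters. The paper's order of elimination is the right one: since $u_{\parallel}(0) = u^p(0) = 0$ while $\tilde u_s(0) \sim -1 \neq 0$ (Lemma \ref{lemma.asy.1}), the condition $h(0)=0$ kills $c_2$ first, which removes exactly the cancellation scenario above. Then $h_{yy}(\infty)=0$ kills $c$, but this requires showing that $u^p_{yy}$ itself grows like $\exp[v_{\parallel}(\infty)y]$ with no internal cancellation — i.e., that the term $\tilde u_{syy}\int_0^y u_{\parallel}\exp[-\int_1^z v_{\parallel}]\ud z$ dominates the bounded term $\tilde u_{sy}u_{\parallel}\exp[-\int_1^y v_{\parallel}]$ (this is the computation around (\ref{cont.exp}); your remark about ``$\tilde u_s$ times a polynomially growing integral'' only addresses the first of the two terms in $u^p$). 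Finally $\omega[h]=0$ kills $c_1$. With the elimination done in this order your proof goes through; without using $h(0)=0$ it does not.
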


\begin{proof}  Here, we define $T = - h_{yyy} + v_\parallel h_{yy}$, and $T_c = h v_{\parallel yy}$. We take $H_D = \Upsilon_\perp$. That $T$ is bounded below follows from the previous lemma. We proceed to rule out elements of $\text{Ker}(T+T_c) \subset \Upsilon_\perp$. An appeal to Proposition \ref{propn.span} shows that a general solution for the operator $L_{v_s}h= 0$ can be expressed as: $h = c_1 u_{\parallel} + c_2 \tilde{u}_s + c u^p$. As $u_{\parallel}(0) = u^p(0) = 0$, the boundary condition $h(0) = 0$ eliminates $c_2$. We next use $h_{yy}(\infty) = 0$ to eliminate $c$. In particular, $h_{yy} = c_1 u_{\parallel}'' + cu^p_{yy}$ is computed via: 
\begin{align} \label{cont.exp}
\lim_{y \rightarrow \infty} u^p_{yy} = & \lim_{y \rightarrow \infty} \Big\{ \tilde{u}_{syy}\int_0^y u_{\parallel} \exp \Big[ - \int_1^z v_{\parallel} \Big] + \tilde{u}_{sy} u_{\parallel} \exp \Big[ - \int_1^y v_{\parallel} \Big] \Big\}.
\end{align}

\noindent  The first term is dominant, thereby preventing cancellation between these two factors. To see this, for the first term, as $v_\parallel \ge 0$ and $\lim_{y \rightarrow \infty} v_{\parallel} \ge c_0 > 0$, we have, for large enough $y$, that the integral $\int_0^y u_s \exp [ - \int_1^z v_\parallel ] \ud z \gtrsim 1$. Coupling this elementary bound with Lemma \ref{lemma.asy.1} yields: 
\begin{align*}
 \tilde{u}_{syy} \int_0^y u_\parallel \exp [ - \int_1^z v_\parallel ] \ud z \sim \tilde{u}_{syy} \sim \exp[v_\parallel(\infty) y] \text{ as } y \uparrow \infty.
\end{align*}

On the other hand, Lemma \ref{lemma.asy.1} also gives: 
\begin{align*}
\Big|\tilde{u}_{sy} u_\parallel \exp [ - \int_1^y v_\parallel ] \ud z\Big| \lesssim \exp[v_{\parallel}(\infty) y] \exp[-v_{\parallel}(\infty) y] \lesssim 1. 
\end{align*}

\noindent  Thus, (\ref{cont.exp}) contributes exponentially at $y = \infty$, and we must conclude $c = 0$. Finally, we recall (\ref{propLbig}), and we use the condition $\omega[h] = 0$ for elements $h \in \Upsilon_\perp$ to conclude that $c_1 = 0$.

\end{proof}

\subsection{$B$ Estimate} \label{subsection.Bgamma}

We begin with the following lemma: 

\begin{lemma}\normalfont If $h \in \mathcal{L}_{\parallel}(\Upsilon)$ then $\bold{d}(h) = 0$.  
\end{lemma}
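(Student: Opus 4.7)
The key structural observation is that $\mathcal{L}_{\parallel}$ is an exact $y$-derivative of the auxiliary second-order operator
\begin{align*}
\mathcal{L}^1_{\parallel} g := -g_{yy} + v_{\parallel} g_y - g\, v_{\parallel y},
\end{align*}
namely $\mathcal{L}_{\parallel} g = \partial_y \mathcal{L}^1_{\parallel} g$ by a direct Leibniz expansion. For $g \in \Upsilon$, the weighted $L^2$ bounds encoded in $\|g\|_{\Upsilon}$ together with $g\in L^2$ force $\mathcal{L}^1_{\parallel} g(y) \to 0$ as $y \to \infty$, so that $I_y[\mathcal{L}_{\parallel} g](y) = \mathcal{L}^1_{\parallel} g(y)$ and therefore
\begin{align*}
\bold{d}(\mathcal{L}_{\parallel} g) = \int_0^\infty K(y)\, \mathcal{L}^1_{\parallel} g(y)\, \ud y.
\end{align*}

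The second structural fact is that $u_{\parallel} \in \text{Ker}(\mathcal{L}^1_{\parallel})$, i.e.\ $u_{\parallel yy} = v_{\parallel} u_{\parallel y} - u_{\parallel} v_{\parallel y}$, as follows from the Prandtl equation (\ref{Pr.leading.intro}) restricted to $x = 0$ together with incompressibility $u^0_{px}|_{x = 0} = -v_{\parallel y}$. I plan to exploit this through the factorization of $K$: writing $\mu(y) := e^{-\int_1^y v_{\parallel}}$ so that $\mu_y = -v_{\parallel}\mu$ and $K = u_{\parallel}\mu$, a short computation yields the adjoint-type identity
\begin{align*}
\mu\cdot \mathcal{L}^1_{\parallel} g = -(\mu g_y)_y - \mu v_{\parallel y}\, g,
\end{align*}
which is the device that moves $\mathcal{L}^1_{\parallel}$ onto $u_{\parallel}$ under integration by parts.

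Substituting and integrating by parts twice produces
\begin{align*}
\bold{d}(\mathcal{L}_{\parallel} g) = \Big[-u_{\parallel}\mu g_y + u_{\parallel y}\mu\, g\Big]_0^\infty - \int_0^\infty \mu\,\big(u_{\parallel yy} - v_{\parallel} u_{\parallel y} + u_{\parallel} v_{\parallel y}\big)\, g\, \ud y.
\end{align*}
The bulk integrand vanishes identically since $u_{\parallel} \in \text{Ker}(\mathcal{L}^1_{\parallel})$. The boundary contribution at $y = \infty$ is killed by the exponential decay of $\mu$ (using $v_{\parallel}(\infty) > 0$) dominating the at-most-polynomial behavior of $u_{\parallel}, u_{\parallel y}, g, g_y$ allowed by $\Upsilon$. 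At $y = 0$, the first term vanishes thanks to the no-slip value $u_{\parallel}(0) = 0$ of the Blasius profile, while the second vanishes via the implicit boundary condition $g(0) = 0$ attached to $\Upsilon$-functions used as solutions of (\ref{sys.u0}). Hence $\bold{d}(h) = 0$.

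The conceptual obstacle is recognizing the adjoint-type factorization $\mu\mathcal{L}^1_{\parallel}g = -(\mu g_y)_y - \mu v_{\parallel y} g$ in the second paragraph; everything else is a routine bookkeeping of boundary terms.
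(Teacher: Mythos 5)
Your proof is correct, and it takes a genuinely different route from the paper. The paper proves this by inverting $\mathcal{L}_{\parallel}$ explicitly: it writes a preimage $\bar{u}_\perp$ via variation of parameters, invokes the asymptotics of the exponentially growing kernel element $\tilde{u}_s$ (Lemma \ref{lemma.asy.1}), and identifies $\bold{d}(h)$ as the coefficient multiplying that growing mode in $\lim_{y\uparrow\infty}\bar{u}_\perp'(y)$, which must vanish for the preimage to lie in $\Upsilon$. You instead use the factorization $\mathcal{L}_{\parallel}=\p_y\mathcal{L}^1_{\parallel}$ together with the fact that $K=u_{\parallel}e^{-\int_1^y v_{\parallel}}$ is an adjoint solution (your identity $\mu\,\mathcal{L}^1_{\parallel}g=-(\mu g_y)_y-\mu v_{\parallel y}g$ plus $u_{\parallel}\in\mathrm{Ker}(\mathcal{L}^1_{\parallel})$, exactly the kernel fact the paper records in Lemma \ref{propn.span}), and you integrate by parts directly; this avoids the construction and asymptotic analysis of $\tilde{u}_s$ entirely and even yields the sharper identity $\bold{d}(\mathcal{L}_{\parallel}g)=-u_{\parallel y}(0)\mu(0)\,g(0)$. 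Both arguments use the same boundary inputs: $u_{\parallel}(0)=0$, decay at $y=\infty$ supplied by membership in $\Upsilon$ (the paper uses $\bar{u}_\perp'(\infty)=0$, you use $g,g_y\to0$ and $\mathcal{L}^1_{\parallel}g\to 0$), and the vanishing of the preimage at $y=0$. Note that this last condition is not literally contained in the definition of $\Upsilon$ in (\ref{L.norm}); the paper uses it tacitly in the step ``$\bar{u}_\perp(0)=0$,'' while you flag it explicitly, and your formula shows it is genuinely necessary. This is consistent with how the lemma is applied in Lemma \ref{Lemma.kappa}, where the preimage is $u_\perp=u^0-\kappa u_{\parallel}$ with $u^0(0)=u_{\parallel}(0)=0$.
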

\begin{proof} Assume that $h \in \text{Range}(\mathcal{L}_{\parallel})$. Then there exists an element, $\tilde{u} \in \Upsilon$, such that $\mathcal{L}_{\parallel}(\tilde{u}) =h$. By decomposing $\tilde{u} = \bar{u}_\perp + m_2 u_{\parallel}$, we see that $L_{\parallel}[\bar{u}_\perp] = h$. Generically, such a solution, $\bar{u}_\perp$, is of the form:  
\begin{align*}
\bar{u_\perp} = \alpha u_{\parallel} + \alpha_2 \tilde{u}_s + C_0 u^p - u^p_{(h)}, 
\end{align*}

\noindent where 
\begin{align} \n
u^p_{(h)} :=  \tilde{u}_s \int_0^y u_{\parallel} \exp\Big[ -\int_1^y v_{\parallel} \Big] I_y[h] -u_{\parallel} \int_0^y \tilde{u}_s \exp \Big[ - \int_1^y v_{\parallel} \Big] I_y[h].
\end{align}

\noindent Above, we note that $I_y[h]$ is well defined as $h \in L^2(\langle y \rangle) \hookrightarrow L^1$. 

First, we use that $\bar{u_\perp}(0) = 0$ to eliminate the coefficient in front of $\tilde{u}_s$. Next, we take the indefinite integral of $\mathcal{L}_{\parallel} \bar{u}_\perp$ and $h$ (thereby creating the constant $C_0$ below) to obtain: 
\begin{align} \label{Int.eqn.1}
- \bar{u}_{\perp yy} + v_\parallel \bar{u}_{\perp y} - v_{\parallel y} \bar{u}_\perp = C_0 +  I_y[h]. 
\end{align}

\noindent  Evaluating this equation as $y \uparrow \infty$ shows that $C_0 = 0$. We have thus determined that $\bar{u_\perp} = \alpha u_{\parallel} - u^p_{(r)}$. We now compute $\bar{u_\perp}'$ directly using the expression for $u^p_{(h)}$, which yields: 
\begin{align*}
\lim_{y \uparrow \infty} \bar{u_\perp}'(y) =& \lim_{y \uparrow \infty} \tilde{u}_{sy} \Big\{ -  \int_0^y I_y[h] K  \Big\} = 0,
\end{align*}

\noindent  which implies that $\bold{d}(h) = 0$. 
\end{proof}

\begin{lemma} \label{L.estimate} \normalfont  Assume (\ref{charlie.1}) - (\ref{charlie.3}). Let $u^0 = u_\perp + \kappa u_{\parallel}$ be a solution to (\ref{sys.u0}). Then the following estimate holds:
\begin{align} \label{estimate.J}
\| u_\perp \|_{\Upsilon}  \lesssim \delta_s\eps^{\frac{1}{4}}|\kappa| + \| u_{\perp yy} \langle y \rangle \{1 - \chi(\frac{Y}{\gamma}) \} \| + \| F \langle y \rangle \|. 
\end{align}
\end{lemma}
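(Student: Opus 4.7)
The plan is to apply the coercivity estimate of Proposition \ref{prop.Lvs} to the perpendicular component $u_\perp$, replace $\mathcal{L}_\parallel u_\perp$ via the equation (\ref{sys.u0}), and then absorb the contributions of $\sqrt{\eps}A$ and $J$ with small constants. The key structural fact is that $\mathcal{L}_\parallel u_\parallel = 0$, since $u_\parallel = \bar u^0_p|_{x=0}$ satisfies the Prandtl equation (\ref{Pr.leading.intro}). Decomposing $u^0 = u_\perp + \kappa u_\parallel$ in $\mathcal{L}_\delta = \mathcal{L}_\parallel + \sqrt{\eps}A + J$ therefore yields
$$\mathcal{L}_\parallel u_\perp = F - \sqrt{\eps} A u^0 - J u^0,$$
and Proposition \ref{prop.Lvs} reduces the task to establishing
$$\|u_\perp\|_\Upsilon \lesssim \|F\langle y\rangle\| + \sqrt{\eps}\|A u^0 \langle y\rangle\| + \|J u^0 \langle y\rangle\|$$
and absorbing everything into the claimed right-hand side of (\ref{estimate.J}).

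For the $A$ contribution, split $Au^0 = Au_\perp + \kappa A u_\parallel$. Since $\bar v^1_p$ and $\bar v^1_{pyy}$ are rapidly decaying Prandtl profiles, $\sqrt{\eps}\|Au_\perp\langle y\rangle\| \lesssim \sqrt{\eps}\|u_\perp\|_\Upsilon$ (absorbed for small $\eps$), while $\sqrt{\eps}|\kappa|\|Au_\parallel\langle y\rangle\| \lesssim \sqrt{\eps}|\kappa|$. Every term of $Ju^0$ except $\bar v^1_e u^0_{yy}$ carries at least one extra factor of $\sqrt{\eps}$ together with a Prandtl-decaying or locally supported coefficient, so the same strategy, combined with the embedding $\|h\langle y\rangle^{-1}\|\lesssim \|h\|_\Upsilon$ for the terms coupled to $u^0$ itself, produces a contribution bounded by $C\sqrt{\eps}(\|u_\perp\|_\Upsilon + |\kappa|)$, which is absorbed up to a $\sqrt{\eps}|\kappa|$ residual. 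The $\delta u^0_{yy}$ piece from $J$ is handled identically via smallness of $\delta$.

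The essential obstacle is the Euler term $\bar v^1_e u^0_{yy}$, which is order one in $\eps$ and whose coefficient does not decay in $y$. I handle it by localization in the Euler variable $Y=\sqrt{\eps}y$ via the cutoff $\chi(Y/\gamma)$ with $\gamma$ chosen comparable to $\delta_s \eps^{1/4}$. On $\mathrm{supp}\,\chi(Y/\gamma)$, the vanishing $\bar v^1_e|_{Y=0}=0$ and smoothness of $v^1_e$ force $|\bar v^1_e(Y)|\lesssim Y \le 2\gamma$, yielding
$$\|\bar v^1_e u^0_{yy}\chi(Y/\gamma)\langle y\rangle\| \lesssim \gamma\|u_\perp\|_\Upsilon + \gamma|\kappa|\|u_{\parallel yy}\langle y\rangle\|.$$
The first piece is absorbed for $\gamma$ small. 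For the second, I use the sharper globally valid bound $|\bar v^1_e(\sqrt{\eps}y)|\lesssim \sqrt{\eps}y$ together with the Gaussian decay of $u_{\parallel yy}$ inherited from the Blasius profile (\ref{blasius})--(\ref{blasius.ODE}), upgrading the $\kappa$-contribution to $\sqrt{\eps}|\kappa|\|y u_{\parallel yy}\langle y\rangle\| \lesssim \sqrt{\eps}|\kappa| \le \eps^{1/4}|\kappa|$. On the complementary far field $\{Y\gtrsim \gamma\}$, the $\kappa u_\parallel$ contribution is exponentially small in $\eps^{-1/2}$ by the same Gaussian decay, while boundedness of $\bar v^1_e$ leaves precisely $\|u_{\perp yy}\langle y\rangle(1-\chi(Y/\gamma))\|$, the far-field term in (\ref{estimate.J}), which is deliberately retained for treatment in the subsequent steps via the second decomposition of $v_s$ in (\ref{opt}). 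Combining everything and choosing $\gamma$ appropriately delivers the claim; the main conceptual difficulty is exactly the inability to treat $\bar v^1_e u^0_{yy}$ perturbatively, which is circumvented by this Euler-scale localization together with the vanishing of $\bar v^1_e$ at the boundary.
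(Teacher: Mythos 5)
Your overall architecture coincides with the paper's: decompose $\mathcal{L}_\delta = \mathcal{L}_{\parallel} + \sqrt{\eps}A + J$, use $\mathcal{L}_{\parallel}u_{\parallel} = 0$ and the coercivity of Proposition \ref{prop.Lvs} on $u_\perp$, treat $\sqrt{\eps}A$ perturbatively, and handle the order-one term $\bar{v}^1_e u^0_{yy}$ by localizing at the Euler scale with $\chi(Y/\gamma)$, retaining the far-field piece and bounding the parallel piece via $|\bar{v}^1_e| \lesssim \sqrt{\eps}\,y$ against the rapid decay of $u_{\parallel yy}$. All of that matches the paper's proof.

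There is, however, a concrete gap in your blanket treatment of the remaining terms of $J$. You assert that every term other than $\bar{v}^1_e u^0_{yy}$ ``carries at least one extra factor of $\sqrt{\eps}$ together with a Prandtl-decaying or locally supported coefficient'' and is therefore $\lesssim \sqrt{\eps}(\|u_\perp\|_\Upsilon + |\kappa|)$. This is false for the Euler shear terms $\eps\Delta v^1_e\, u^0$ (and the higher $\eps\sqrt{\eps}^{i-1}\Delta v^i_e u^0$, $\sqrt{\eps}^{i-1}\bar{v}^i_e u^0_{yy}$, $i\ge 2$): $\Delta v^1_e$ is an Euler-scale coefficient, neither rapidly decaying in $y$ nor compactly supported, and the weight $\langle y \rangle$ costs a factor $\eps^{-1/2}$ when converted to $\langle Y \rangle$, while $\|\cdot\|_{L^2_y} = \eps^{-1/4}\|\cdot\|_{L^2_Y}$. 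The correct bounds are $\|\eps\Delta v^1_e u_\perp \langle y \rangle\| \lesssim \|\Delta v^1_e \langle Y\rangle^2\|_\infty \|u_{\perp y}\| = o(1)\|u_\perp\|_\Upsilon$ and $\|\eps \Delta v^1_e\, \kappa u_{\parallel}\langle y\rangle\| \lesssim \delta_s\,\eps^{\frac{1}{4}}|\kappa|$, both of which require the smallness hypothesis (\ref{charlie.2}) together with the Euler equation $\Delta v^1_e = \frac{u^0_{eYY}}{u^0_e} v^1_e$; this is exactly the origin of the $\delta_s \eps^{\frac{1}{4}}|\kappa|$ term in (\ref{estimate.J}), which your argument never produces (your claimed $\sqrt{\eps}|\kappa|$ is not attainable for that term, only $\eps^{\frac14}\delta_s|\kappa|$ is). The $\delta_s$ factor is not cosmetic: when (\ref{estimate.J}) is later combined with Lemma \ref{Lemma.kappa}, a bound of the form $C\eps^{\frac14}|\kappa|$ with $C = O(1)$ would make the scheme circular, so the $o(1)$ constant coming from (\ref{charlie.2}) must be tracked. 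Separately, your choice $\gamma \sim \delta_s\eps^{\frac14}$ is unnecessary (any fixed small $\gamma$ absorbs the localized piece) and, taken literally, makes the far-field cutoff $\eps$-dependent, which then has to be reconciled with the fixed-$\gamma$ cutoff appearing in the statement and in the multiscale Lemma \ref{lemma.Z.scale.1}.
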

\begin{proof} We will decompose the operator $\mathcal{L}_\delta$ in the following manner: 
\begin{align} \n
\mathcal{L}_\delta u^0 = & \mathcal{L}_{\parallel} u^0 + \sqrt{\eps} A u^0 + \bar{v}^1_e u^0_{yy} - \eps \Delta v^1_e u^0 + \tilde{J}u^0 \\ \n
= & \mathcal{L}_{\parallel} u_\perp + \sqrt{\eps} \kappa A u_{\parallel} + \kappa \bar{v}^1_e u_{\parallel yy} - \kappa \eps \Delta v^1_e u_{\parallel} \\ \n
& + \sqrt{\eps} A u_\perp + \bar{v}^1_e u_{\perp yy} - \eps \Delta v^1_e u_{\perp} + \tilde{J}u^0 \\ \label{start}
= & \mathcal{L}_{\parallel} u_\perp + \sqrt{\eps} \kappa S(u_{\parallel}) + \sqrt{\eps} S(u_\perp) + \tilde{J}u^0, 
\end{align}

\noindent where $\tilde{J}u^0 = Ju^0 - \{ \bar{v}^1_e u^0_{yy} - \eps \Delta v^1_e u^0 \}$ is explicitly given by
\begin{align*}
\tilde{J} :=& \delta u^0_{yy} + u^0_{yy} \sum_{i = 2}^n \sqrt{\eps}^i \bar{v}^i_e + u^0_{yy} \sum_{i = 2}^n \sqrt{\eps}^i \bar{v}^i_p - \eps u^0 \sum_{i = 0}^n \sqrt{\eps}^i v^i_{pxx} \\
& - u^0 \sum_{i = 2}^n \sqrt{\eps}^i v^i_{pyy} - \eps u^0 \sum_{i = 2}^n \sqrt{\eps}^{i-1} \Delta v^i_e,
\end{align*}

\noindent and we have defined $S$ via 
\begin{align*}
Sg :=  \sqrt{\eps} A g + \bar{v}^1_e g_{yy} - \eps \Delta v^1_e g.
\end{align*}

We use (\ref{start}) to write the equation as 
\begin{align} \label{bstatech1}
\mathcal{L}_{\parallel} u_\perp = F - \sqrt{\eps} \kappa S(u_{\parallel}) - \sqrt{\eps} S(u_\perp) - \tilde{J} u^0. 
\end{align}

We place both sides of the equation in $L^2(\langle y \rangle)$. On the left-hand side of (\ref{bstatech1}), we produce coercivity over $\| u_\perp \|_{\Upsilon}$. Next, we have
\begin{align*}
\| \sqrt{\eps}\kappa S(u_\parallel) \langle y \rangle \| \le & \| \sqrt{\eps} \kappa A u_\parallel \langle y \rangle \| + \| \kappa \bar{v}^1_e u_{\parallel yy} \langle y \rangle \| + \|\kappa \eps \Delta v^1_e u_\parallel \langle y \rangle \| \\
\le & \sqrt{\eps}|\kappa| \| A u_\parallel \langle y \rangle \| + \sqrt{\eps}\kappa | \bar{v}^1_{eY}(0) \langle y \rangle^2 u_{\parallel yy} \|_2 \\
& + \eps^{\frac{1}{4}} \delta_s |\kappa| \| \frac{u^0_{eYY}}{u^0_e} Y \|_{L^2_Y} \\
\lesssim & ( \delta_s \eps^{\frac{1}{4}} + \sqrt{\eps} ) |\kappa|.
\end{align*}

We treat the terms in $\sqrt{\eps}S(u_\perp)$ perturbatively. First, 
\begin{align*}
\| \sqrt{\eps} A u_\perp \langle y \rangle \| \lesssim \sqrt{\eps} \| u_\perp \|_{\Upsilon}.
\end{align*}

Next, we treat the $\bar{v}^i_e = v^i_e - v^i_e|_{y = 0}$ term by localizing. Recalling the definition of $\chi$ in (\ref{basic.cutoff}), we will fix $\gamma > 0$ to be a small parameter, and then split: 
\begin{align*}
\|\bar{v}^i_e u_{\perp yy} \langle y \rangle\| \le \|\bar{v}^i_e u_{\perp yy} \langle y \rangle \chi(\frac{Y}{\gamma})\| +  \|\bar{v}^i_e u_{\perp yy} \langle y \rangle \{1- \chi(\frac{Y}{\gamma})\}\|. 
\end{align*}

\noindent The localized contribution is estimated upon using $|\bar{v}^1_e| \lesssim Y$ via: 
\begin{align*}
\| \bar{v}^1_e u_{\perp yy} \chi(\frac{Y}{\gamma}) \langle y \rangle \| \le \| \bar{v}^1_e \chi(\frac{Y}{\gamma}) \|_\infty \gamma \| u_{\perp yy} \langle y \rangle \| \lesssim \gamma \| u_{\perp yy} \langle y \rangle \|. 
\end{align*}

\noindent As $\gamma << 1$, we absorb this into the $\|u_\perp \|_{\Upsilon}$ term appearing on the left-hand side. The nonlocal component contributes to the right-hand side of estimate (\ref{estimate.J}). 

Next, for the shear contribution we use that $\eps \langle y \rangle^2 \lesssim \langle Y \rangle^2$ to estimate:
\begin{align*}
\| \eps \Delta v^1_e u_\perp \langle y \rangle \| &= \| \eps \Delta v^1_e \frac{u_\perp}{\langle y \rangle} \langle y \rangle^2 \| \\
& \le \| \Delta v^1_e \langle Y \rangle^2 \|_\infty \| u_{\perp y} \| = o(1) \| u_\perp \|_{\Upsilon},
\end{align*}

\noindent which is absorbed to the left-hand side thanks to (\ref{charlie.1}) - (\ref{charlie.3}) and since the Euler equation for $v^1_e$ reads: 
\begin{align*}
\Delta v^1_e = \frac{u^0_{eYY}}{u^0_e}v^1_e. 
\end{align*}

We now move to the terms in $\tilde{J}$, which are all treated perturbatively. First, using $\delta < \sqrt{\eps}$: 
\begin{align*}
\|\delta u^0_{yy} \langle y \rangle\| \le \delta \|u_{\perp yy} \langle y \rangle\| + \frac{\delta}{\sqrt{\eps}} |\sqrt{\eps}\kappa| \cdot \|u_{\parallel yy} \langle y \rangle \|.
\end{align*}

The $\bar{v}^i_e u^0_{\perp yy}$ terms are treated the same as in the $\bar{v}^1_e u^0_{\perp yy}$ term. For the parallel contribution, we estimate for $i \ge 2$:   
\begin{align*}
\| \sqrt{\eps}^{i-1} \bar{v}^i_e u_{\parallel yy} \kappa \langle y \rangle \| \le & \sqrt{\eps}^{i-1} \Big[ \| \bar{v}^i_e u_{\parallel yy} \kappa \langle y \rangle \chi(Y) \| +  \| \bar{v}^i_e u_{\parallel yy} \kappa \langle y \rangle \{1 - \chi(Y) \} \| \Big] \\
\lesssim & \eps |\kappa|.
\end{align*}

The final $u^0_{yy}$ term may be estimated directly as:
\begin{align*}
\|u^0_{yy} \sum_{i = 2}^n \sqrt{\eps}^i \bar{v}^i_p \langle y \rangle\| \lesssim \eps \|u_{\perp yy} \langle y \rangle\| + \sqrt{\eps} \|u_{\parallel yy} \langle y \rangle\|.  
\end{align*}

We now move to the $u^0$ terms from $\tilde{J}$, (\ref{label.J}). The rapid decay of $v^0_p$ gives: 
\begin{align*}
\|\eps v^0_{pxx} u^0 \langle y \rangle\| \le & \|\eps v^0_{pxx}[u_\perp +\kappa u_\parallel] \langle y \rangle\| \\
\le & \eps \| v^0_{pxx} \langle y \rangle^{2} \|_\infty  \|u_\perp \langle y \rangle^{-1}\| + \sqrt{\eps} |\sqrt{\eps}\kappa| \| v^0_{pxx} \langle y \rangle^{2} \|_\infty \|\frac{u_\parallel}{\langle y \rangle}\|.
\end{align*}

The same computation is performed for all intermediate Prandtl layers, $v^i_p$, $i = 1,..,n-1$. Next, we perform the same estimate for $v^n_{pxx}$, this time using that the support is on $y \le \frac{1}{\sqrt{\eps}}$: 
\begin{align*}
\|\eps^{\frac{n}{2}} v^n_{pxx} u^0 \langle y \rangle\| \le & \|\eps^{\frac{n}{2}} v^n_{pxx}[u_\perp + \kappa u_\parallel] \langle y \rangle\| \\
\le & \eps^{\frac{n-2}{2}} \| v^n_{pxx}\eps  \langle y \rangle^{2} \|_\infty  \|u_\perp \langle y \rangle^{-1}\| + \eps^{\frac{n}{2}} \eps^{-\frac{1}{2}(\frac{5+}{2})} |\sqrt{\eps}\kappa| \\ 
&\times  \| v^n_{pxx} \eps^{\frac{1}{2}(\frac{3+}{2})} \langle y \rangle^{\frac{3+}{2}} \|_\infty \|\frac{u_\parallel}{\langle y \rangle^{\frac{3+}{2}}}\|.
\end{align*}

Next, we treat contributions from $v^i_{pyy}$ for $i = 2,...,n$, again using the rapid decay of this quantity: 
\begin{align*}
\|u^0 \sum_{i = 2}^n \sqrt{\eps}^i v^i_{pyy} \langle y \rangle\| \lesssim \eps \| v^i_{pyy} \langle y \rangle^{2} \|_\infty \Big[ \|u^0_\perp \langle y \rangle^{-1}\| + |\kappa| \|\frac{u_{\parallel}}{\langle y \rangle}\| \Big].
\end{align*}

The remaining, higher order shear terms can be treated as in the $\Delta v^1_e$ case for $u_\perp$, whereas for the parallel component:  
\begin{align*}
\| \eps \sqrt{\eps}^{i-1} \Delta v^i_e \kappa u_{\parallel} \langle y \rangle \| \lesssim \eps^{\frac{3}{4}}|\kappa|. 
\end{align*}

On the right-hand side, we majorize using $\| F \langle y \rangle \|$. This concludes the lemma. 

\end{proof}

\begin{lemma} \label{Lemma.kappa} \normalfont Let $u^0 = u_\perp + \kappa u_{\parallel}$ solve equation (\ref{sys.u0}). Assume the nondegeneracy condition, (\ref{charlie.3}). Then the following estimate is valid: 
\begin{align} \label{kappa.est}
\sqrt{\eps}| \kappa| \lesssim \eps^{\frac{1}{4}} \| u_\perp \|_{\Upsilon} + \eps^{\frac{1}{4}} \| u^0_{yy} \langle y \rangle \{1 - \chi(Y) \} \| + \| F \langle y \rangle^{\frac{1}{2}+} \|.
\end{align}
\end{lemma}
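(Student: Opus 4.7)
My strategy is to apply the degree functional $\bold{d}(\cdot)$ defined in (\ref{defn.boldd}) to the equation (\ref{sys.u0}), exploiting the fact established in the lemma at the start of Subsection \ref{subsection.Bgamma} that $\bold{d}$ annihilates the range of $\mathcal{L}_{\parallel}$. Starting from the decomposition (\ref{start}), the equation $\mathcal{L}_\delta u^0 = F$ reads
\begin{align*}
\mathcal{L}_{\parallel} u_\perp + \sqrt{\eps}\,\kappa\, S(u_{\parallel}) + \sqrt{\eps}\, S(u_\perp) + \tilde{J} u^0 = F.
\end{align*}
Applying $\bold{d}(\cdot)$ to both sides and using $\bold{d}(\mathcal{L}_{\parallel} u_\perp) = 0$ yields
\begin{align*}
\sqrt{\eps}\,\kappa\, \bold{d}(S u_{\parallel}) = \bold{d}(F) - \sqrt{\eps}\,\bold{d}(S u_\perp) - \bold{d}(\tilde{J} u^0),
\end{align*}
so the task reduces to (i) proving the quantitative non-degeneracy $|\bold{d}(S u_{\parallel})| \gtrsim 1$, and (ii) estimating each of the three terms on the right by the three quantities appearing in (\ref{kappa.est}).

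The heart of the argument, and the main obstacle, is (i). Unpacking $Su_{\parallel}$ one recognizes that its dominant piece $r(y) = \bar{v}^1_p u_{\parallel yy} - u_{\parallel} \bar{v}^1_{pyy}$ is exactly the $x = 0$ trace of the source term in the linearized Prandtl equation for the first-order corrector $(u^1_p, v^1_p)$, whose forcing is $g^{u,1}_{ext, p}$. Rewriting $\bold{d}$ via Fubini as a pairing of $Su_{\parallel}$ against $\tilde K(z) := \int_0^z K(y)\ud y$, integrating by parts, and using that $K = u_{\parallel}\, e^{-\int_1^y v_{\parallel}}$ has been manufactured precisely so that $\mathcal{L}_{\parallel}$-type expressions against $\tilde K$ telescope to boundary terms, in exactly the spirit of the lemma at the start of Subsection \ref{subsection.Bgamma}, I expect to reduce $\bold{d}(S u_{\parallel})$ to a nonzero scalar multiple of $\int_0^\infty K(y)\, g^{u,1}_{ext, p}(y)\ud y$. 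Then assumption (\ref{charlie.3}) supplies the desired lower bound $|\bold{d}(Su_{\parallel})| \gtrsim 1$. This identification is the only place in the argument where the specific structure of the Prandtl corrector hierarchy and the non-degeneracy condition are used, and it is the step I expect to require the most care.

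For (ii), the three upper bounds are routine once (i) is available. Since $K$ decays exponentially at infinity (because $v_{\parallel}(\infty) > 0$ by (\ref{sign.blasius})), $\tilde K$ is bounded and $\langle y \rangle^{-\frac{1}{2}-}\tilde K \in L^2$, which gives $|\bold{d}(F)| \lesssim \|F \langle y \rangle^{\frac{1}{2}+}\|$ directly by Cauchy--Schwarz. The term $\sqrt{\eps}\,\bold{d}(Su_\perp)$ is handled by the same split used in the proof of Lemma \ref{L.estimate}: the $\bar v^1_e u_{\perp yy}$ contribution must be decomposed into a small-$Y$ localization (giving $\eps^{\frac{1}{4}} \|u_\perp\|_{\Upsilon}$) and a far-field piece (which contributes the $\eps^{\frac{1}{4}}\|u^0_{yy} \langle y\rangle \{1 - \chi(Y)\}\|$ term), while the remaining $\sqrt{\eps}A u_\perp$ and $\eps \Delta v^1_e u_\perp$ pieces are perturbative after invoking $\|\Delta v^1_e \langle Y \rangle^2\|_\infty = o(1)$. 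Finally, $\bold{d}(\tilde J u^0)$ is split via $u^0 = u_\perp + \kappa u_{\parallel}$; the $u_\perp$ component contributes to $\|u_\perp\|_{\Upsilon}$ while the $\kappa u_{\parallel}$ component, in view of the $\eps$-smallness and rapid decay of the coefficients appearing in $\tilde J$, contributes only $o(\sqrt{\eps})|\kappa|$, which is absorbed back to the left-hand side. Dividing through by $|\bold{d}(Su_{\parallel})|$ then produces (\ref{kappa.est}).
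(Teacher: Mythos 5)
Your argument is essentially the paper's own proof: apply $\bold{d}$ to the decomposition (\ref{start}), use that $\bold{d}$ annihilates the range of $\mathcal{L}_{\parallel}$, invoke the non-degeneracy of $\bold{d}(S u_{\parallel})$ to retain $\sqrt{\eps}|\kappa|$, and bound $\bold{d}(F)$, $\sqrt{\eps}\,\bold{d}(S u_\perp)$ (via the $\chi(Y)$ split together with $\| \langle y \rangle^{-1}\{1-\chi(Y)\}\| \sim \eps^{\frac{1}{4}}$) and $\bold{d}(\tilde{J}u^0)$ exactly as the paper does. The only small caveat is that the reduction of $\bold{d}(S u_{\parallel})$ is not to a pure multiple of $\int_0^\infty K\, g^{u,1}_{ext,p}\ud y$ but to the quantity $\mathfrak{n}$ of (\ref{ndef}), which also carries shear/Euler contributions that are negligible only because of (\ref{charlie.2}); the paper performs this computation separately in Subsection \ref{section.euler.example} rather than inside the proof of the lemma, just as you defer it.
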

\begin{proof} Our starting point is the equation: 
\begin{align*}
\bold{d}(\text{Equation } (\ref{start})) = \bold{d}(F). 
\end{align*}

We know that $\bold{d}(\mathcal{L}_{\parallel} u_\perp) = 0$, and that, by the nondegeneracy condition, (\ref{charlie.3}),
\begin{align*}
|\sqrt{\eps} \kappa \bold{d}(S(u_{\parallel}))| \gtrsim  \sqrt{\eps} |\kappa|
\end{align*}

It remains thus to estimate $\bold{d}(\cdot)$ of the latter two terms in (\ref{start}) and $\bold{d}(F)$. We first estimate: 
\begin{align*}
|\bold{d}(F)| = |\int_0^\infty K(y) I_y[F]| \le \| K \|_1 \| I_y[F] \|_\infty \lesssim \| F \|_1 \lesssim \| F \langle y \rangle^{\frac{1}{2}+} \|. 
\end{align*}

The contributions from $\tilde{J}$ and $\sqrt{\eps} A u_\perp$ are majorized easily by: 
\begin{align*}
|\bold{d}( \tilde{J} )| + |\bold{d}(\sqrt{\eps}A u_\perp)| \lesssim& \| \tilde{J} \langle y \rangle \| + \sqrt{\eps} \| A u_\perp \langle y \rangle \| \\
\lesssim & \eps^{\frac{3}{4}} |\kappa| + \sqrt{\eps} \| u_\perp \|_{\Upsilon}. 
\end{align*}

We focus on the two main contributions, beginning with:
\begin{align*}
|\bold{d}(\bar{v}^1_e u_{\perp yy})| &\lesssim \| \bar{v}^1_e u_{\perp yy} \|_1 \le \| \bar{v}^1_e u_{\perp yy} \chi(Y) \|_1 + \| \bar{v}^1_e u_{\perp yy} \{1 - \chi(Y) \} \|_1.
\end{align*}

\noindent For the localized term, we use: 
\begin{align*}
\| \bar{v}^1_e u_{\perp yy} \chi(Y) \|_1 \le & \sqrt{\eps} \| u_{\perp yy} \langle y \rangle \|_2 \| \chi(Y) \|_2 \le  \eps^{\frac{1}{4}} \| u_\perp \|_{\Upsilon}.
\end{align*}

\noindent For the far-field term, we estimate
\begin{align*}
\| \bar{v}^1_e u_{\perp yy} \{1 - \chi(Y) \} \|_1 \le & \| u_{\perp yy} \langle y \rangle \{1- \chi(Y) \} \| \| \langle y \rangle^{-1} \{1 - \chi(Y)\} \| \\
\le & \eps^{\frac{1}{4}} \| u_{\perp yy} \langle y \rangle \{1 - \chi(Y) \} \|
\end{align*}

\noindent Above, we have used the inequality: 
\begin{align*}
\| \langle y \rangle^{-1} \{1 - \chi(\frac{y}{a}) \} \|_{L^2}^2 =& \int_a^\infty \langle  y \rangle^{-2} \ud y = a^{-1}, 
\end{align*}

\noindent with the particular choice of $a = \eps^{-\frac{1}{2}}$. 

We move to the shear $u_\perp$ term: 
\begin{align*}
\|\eps \Delta v^1_e u_\perp \|_1 &\le \|\eps \delta v^1_e \frac{u_\perp}{\langle y \rangle} \langle y \rangle \|_1  \le \sqrt{\eps} \| \Delta v^1_e \langle Y \rangle \| \| u_{\perp y} \| \\
& \le \eps^{\frac{1}{4}} \| \Delta v^1_e Y \|_{L^2_Y} \| u_{\perp y} \|  \lesssim \eps^{\frac{1}{4}} \| u_\perp \|_{\Upsilon},
\end{align*}

\noindent where we have used that $\| \cdot \| = \eps^{-\frac{1}{4}} \| \cdot \|_{L^2_Y}$, the factor of $\eps^{-\frac{1}{4}}$ arising from the Jacobian. This concludes the proof. 

\end{proof}

\begin{lemma}[Multiscale Estimate] \label{lemma.Z.scale.1} \normalfont Assume (\ref{charlie.1}) - (\ref{charlie.3}) and  (\ref{assume.Euler.i}). Then: 
\begin{align}
\begin{aligned} \label{multiscale}
\|u^0_{yy} \{ 1 &- \chi(\frac{Y}{\gamma}) \} \langle y \rangle\|^2 + \|\eps^{\frac{1}{4}}u^0_{yy} \frac{1}{\sqrt{v^1_e}} \{ 1 - \chi(\frac{Y}{\gamma}) \} \langle y \rangle\|^2 \\
& \le (\eps^\infty + \delta_s) \| u^0 \|_{B}^2 + \|F \min \{ \frac{\langle y \rangle}{v^1_e}, \frac{\eps^{-\frac{1}{4}} \langle y \rangle }{\sqrt{v^1_e}} \}  \{ 1 - \chi(\frac{Y}{\gamma}) \} \|^2.
\end{aligned}
\end{align}
\end{lemma}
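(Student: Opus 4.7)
The plan is to test the equation $\mathcal{L}_\delta u^0 = F$ in the far-field $\{Y\ge \gamma\}$ against multipliers of the form $\phi = u^0_{yy}\,w^2\{1-\chi(Y/\gamma)\}^2$, with weights $w$ chosen so that the positivity of $v^1_e$ from (\ref{charlie.1}) furnishes the desired left-hand coercivity. The starting point is to re-expand $v_s$ via the second option in (\ref{opt}), rewriting the equation as
\begin{align*}
-u^0_{yyy} + v^1_e u^0_{yy} - \eps u^0 \Delta v^1_e + (E\text{-terms}) = F,
\end{align*}
where $(E\text{-terms})$ gathers $\delta u^0_{yy}$, all Prandtl pieces $v^0_p u^0_{yy},\sqrt{\eps}\,v^1_p u^0_{yy},\dots$ (each rapidly decaying in $y$, hence $\bigO(\eps^\infty)$ on the support of $1-\chi(Y/\gamma)$), and higher-order Euler corrections (each carrying at least $\sqrt{\eps}$).

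For the first LHS term the natural choice is $w^2 = \langle y\rangle^2/v^1_e$, so that the $v^1_e u^0_{yy}$ contribution to $(\mathcal{L}_\delta u^0,\phi)$ is precisely $\int (u^0_{yy})^2 \langle y\rangle^2\{1-\chi\}^2$; for the second term the choice $w^2 = \eps^{1/2}\langle y\rangle^2/(v^1_e)^2$ reproduces $\|\eps^{1/4} u^0_{yy}\langle y\rangle/\sqrt{v^1_e}\{1-\chi\}\|^2$. The forcing term is handled in each case by $v^1_e$-weighted Cauchy--Schwarz,
\begin{align*}
|F\phi| \le \tfrac12 v^1_e (u^0_{yy})^2 w^2\{1-\chi\}^2 + \tfrac{1}{2v^1_e}F^2 w^2\{1-\chi\}^2,
\end{align*}
whose first summand is absorbed into the LHS coercivity and whose second reproduces either $\|F \langle y\rangle/v^1_e\{1-\chi\}\|^2$ or $\|F \eps^{-1/4}\langle y\rangle/\sqrt{v^1_e}\{1-\chi\}\|^2$, matching the two branches of the $\min$ on the right.

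The remaining terms must be split between the LHS (if they factor through $v^1_e(u^0_{yy})^2$) and $(\eps^\infty+\delta_s)\|u^0\|_B^2$ on the right. Integration by parts of $-u^0_{yyy}$ against $\phi$ has vanishing boundary trace at $y=0$ (where $1-\chi(0)=0$) and at $y=\infty$ (where $\p_y u^0 \to 0$), leaving $\tfrac12\int(u^0_{yy})^2\p_y(w^2\{1-\chi\}^2)$. Derivatives hitting $\langle y\rangle$ or the $v^1_e$ factors are of strictly lower order and readily absorbed; the Prandtl pieces of $E$ contribute $\eps^\infty\|u^0\|_B^2$ by super-algebraic decay beyond $y\sim\eps^{-1/2}$, while the higher-order Euler pieces contribute $O(\sqrt{\eps})\subset \delta_s$. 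The term $\eps u^0\Delta v^1_e$ is handled via the Euler relation $\Delta v^1_e=(u^0_{eYY}/u^0_e)v^1_e$ and assumption (\ref{charlie.2}), which yield $\eps\langle y\rangle^2|\Delta v^1_e|=o(1)v^1_e$; a Hardy-type control of $u^0/\langle y\rangle$ via the $\Upsilon$-norm then closes this contribution into $\|u^0\|_B^2$ with an $o(1)$ prefactor.

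The main obstacle is the cutoff-derivative residue, in which $\chi'(Y/\gamma)$ is supported precisely at $y\sim\gamma\eps^{-1/2}$ where $\langle y\rangle\sim\eps^{-1/2}$ is maximally singular; one cannot afford to lose any power of $\eps^{-1/2}$ here. Closing the estimate at the level of $\delta_s\|u^0\|_B^2$ rather than a bare $\|u^0\|_B^2$ hinges on the weight $\eps^{1/4}\langle y\rangle/\sqrt{v^1_e}$ carried inside $\|u^0\|_B$, whose reciprocal compensates exactly the $\eps^{-1/2}$ from $\langle y\rangle$ on the support of $\chi'(Y/\gamma)$. Choosing $\gamma$ sufficiently small relative to the Euler profile then produces a genuinely small coupling constant $\delta_s$, completing the multiscale estimate.
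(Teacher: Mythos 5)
Your overall strategy is the same as the paper's: test $\mathcal{L}_\delta u^0 = F$ against $u^0_{yy}$ times a $v^1_e$-adapted weight and the far-field cutoff $\{1-\chi(Y/\gamma)\}^2$, extract coercivity from $(v_s+\delta)u^0_{yy}$ using the positivity (\ref{charlie.1}) after the second splitting in (\ref{opt}), dispose of the Prandtl pieces by rapid decay, the higher Euler pieces by their $\sqrt{\eps}$ prefactors, the $\eps u^0\Delta v^1_e$ term via $\Delta v^1_e = \frac{u^0_{eYY}}{u^0_e}v^1_e$ and (\ref{charlie.2}), and the forcing by weighted Cauchy--Schwarz giving the two branches of the $\min$. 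Up to the cosmetic difference that you run the argument twice with two weights while the paper uses one multiplier and applies Cauchy--Schwarz in two ways, this is the paper's proof.

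However, your treatment of what you call ``the main obstacle'' is a genuine gap. After integrating $-u^0_{yyy}$ by parts, the cutoff-derivative residue is
\begin{align*}
\frac{1}{2}\int_0^\infty (u^0_{yy})^2\, w^2\, \partial_y\big(\{1-\chi(\tfrac{Y}{\gamma})\}^2\big)\, \ud y
= \frac{\sqrt{\eps}}{\gamma}\int_0^\infty (u^0_{yy})^2\, w^2\, \{1-\chi\}\,\big(-\chi'(\tfrac{Y}{\gamma})\big)\, \ud y \;\ge\; 0,
\end{align*}
since $\chi'\le 0$ by (\ref{basic.cutoff}); the same favorable sign holds for the terms where $\partial_y$ hits $\langle y\rangle$ or $1/v^1_e$ (with $v^1_e|_{x=0}$ decaying in $Y$). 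This is exactly how the paper handles it: these contributions are simply kept (or dropped) as good terms, and no absorption is needed. Your proposal instead treats the residue as an error and claims it is absorbed into $\delta_s\|u^0\|_B^2$ by taking $\gamma$ small, with the $\eps^{\frac14}\langle y\rangle/\sqrt{v^1_e}$ component of the $B$-norm ``compensating exactly'' the $\eps^{-\frac12}$. That accounting does not work: on the support of $\chi'(Y/\gamma)$ one has $\langle y\rangle\sim\gamma\eps^{-\frac12}$, and comparing the residue weight $\frac{\sqrt{\eps}}{\gamma}\frac{\langle y\rangle^2}{v^1_e}\sim\frac{\gamma}{\sqrt{\eps}\,v^1_e}$ with the localized $B$-norm weight $\eps^{\frac12}\frac{\langle y\rangle^2}{v^1_e}\sim\frac{\gamma^2}{\sqrt{\eps}\,v^1_e}$ shows the residue is of size $\gamma^{-1}$ times the relevant piece of $\|u^0\|_B^2$; shrinking $\gamma$ makes this \emph{larger}, not smaller, and at best one obtains an $O(1)$ constant, which is incompatible with the $(\eps^\infty+\delta_s)$ prefactor the lemma (and the closure of the scheme leading to (\ref{B.thm.estimate})) requires. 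The fix is simply to use the sign of $\chi'$. A smaller inaccuracy: the $n$-th Prandtl layer is cut off at $y\lesssim\eps^{-\frac12}$ and is therefore not $\bigO(\eps^\infty)$ on the support of $1-\chi(Y/\gamma)$; it must be handled, as in the paper, through its explicit $\sqrt{\eps}^{\,n}$ prefactors rather than by decay alone.
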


\begin{proof} We compute the inner-product: 
\begin{align*}
(\mathcal{L}_\delta u^0, u^0_{yy} \frac{1}{w} |\{ 1 - \chi(\frac{Y}{\gamma}) \} |^2 \langle y \rangle^{2}) = (F, \frac{1}{w} u^0_{yy} |\{ 1 - \chi(\frac{Y}{\gamma}) \} |^2 \langle y \rangle^{2}). 
\end{align*}

On the left-hand side, we begin with:
\begin{align} \n
- &( u^0_{yyy}, \frac{1}{w} u^0_{yy} |\{ 1 - \chi(\frac{Y}{\gamma})  \}|^2 \langle y \rangle^{2}) =  ( \frac{|u^0_{yy}|^2}{2}, \eps^{\frac{1}{2}} \Big( |1 - \chi(\frac{Y}{\gamma}) |^2 \Big)' \frac{1}{w} \langle y \rangle^{2})  \\ \label{newz.1}
&- ( \frac{\sqrt{\eps}}{2} |u^0_{yy}|^2, \frac{w'}{w^2} \langle y \rangle^{2}  |\{1 - \chi(\frac{Y}{\gamma})  \}|^2) + (  \frac{1}{w} |u^0_{yy}|^2, |\{ 1 - \chi(\frac{Y}{\gamma})  \}|^2 \langle y \rangle).
\end{align}

First, we note that (\ref{newz.1}.1), (\ref{newz.1}.2), and (\ref{newz.1}.3) are favorable signed terms. First, $\chi' \le 0$ by design (see \ref{basic.cutoff}). It is also clear that $w_{y} w^{-2} \le - (v^1_e)^{-\frac{1}{2}}$. Next, upon using the positivity of $v^1_e$, we estimate 
\begin{align*}
\frac{v_s}{w} \{1 - \chi(\frac{Y}{\gamma})  \} \ge & \frac{v^1_e }{w} \{1 - \chi(\frac{Y}{\gamma})  \} - \frac{\sum_{i = 2}^n \sqrt{\eps}^{i-1}v^i_e}{w} \{1 - \chi(\frac{Y}{\gamma})  \} \\
& - \frac{\sum_{i = 0}^{n-1} \sqrt{\eps}^i v^n_p [1 - \chi(\frac{Y}{\gamma}) ]}{w} - \frac{\sqrt{\eps}^n |v^n_p|}{w} \{1 - \chi(\frac{Y}{\gamma})  \} \\
\gtrsim & 1 \{1 - \chi(\frac{Y}{\gamma})  \}. 
\end{align*}

Using this lower bound, we see 
\begin{align*}
( (v_s + \delta) u^0_{yy},  \frac{1}{w} u^0_{yy} |\{ 1 - \chi(\frac{Y}{\gamma})  \}|^2 \langle y \rangle^{2}) \gtrsim \|u^0_{yy} \{1 - \chi(\frac{Y}{\gamma})  \} \langle y \rangle\|^2. 
\end{align*}

Next, we move to: 
\begin{align} \label{bob}
( &- \Delta_\eps v_s u^0, u^0_{yy} \frac{1}{w} |1 - \chi(\frac{Y}{\gamma}) |^2 \langle y \rangle^{2}) \\ \n
& = - (  [\sum_{i = 0}^n \sqrt{\eps}^i \Delta_\eps v^i_p + \sum_{i  = 1}^n \eps \sqrt{\eps}^{i-1} \Delta v^i_e] u^0, u^0_{yy} \frac{1}{w} |[1 - \chi(\frac{Y}{\gamma}) ]|^2 \langle y \rangle^{2}).
\end{align}

First, using the rapid decay of $v^i_{pyy}$ for $i = 0,...,n$ and $v^j_{pxx}$ for $j = 0,...,n-1$, we immediately estimate: 
\begin{align*}
|( \Big[\sum_{i = 0}^n \sqrt{\eps}^i v^i_{pyy} + \sum_{j = 0}^{n-1} \sqrt{\eps}^j \eps v^j_{pxx} \Big] u^0, u^0_{yy} \frac{1}{w} |1 - \chi(\frac{Y}{\gamma}) |^2 \langle y \rangle^{2})| \lesssim \eps^{\infty} \| u^0 \|_{B}^2.
\end{align*}

This leaves the terms $\eps^{\frac{n+2}{2}} v^n_{pxx}$ and the Euler contributions. First, 
\begin{align*}
|(\ref{bob}.2)[v^n_{pxx}]| \le &\|u^0_{yy} \langle y \rangle\{1 - \chi(\frac{Y}{\gamma})  \}\| \cdot \Big[ \sqrt{\eps}^n \|u_{\perp} \langle y \rangle^{-1}\| \| Y^2 v^1_{pxx} w^{-1}\|_\infty \\
& + \eps^{\frac{n}{2}} \eps^{-\frac{3+}{4}-\frac{1}{2}} |\sqrt{\eps}\kappa| \cdot \|\frac{u_\parallel}{\langle y \rangle^{\frac{1+}{2}}}\| \| Y^{\frac{3+}{2}} v^1_{pxx} w^{-1}\|_\infty\Big].
\end{align*}

We now estimate the shear terms, with the $i > 1$ case following in a similar manner to the $i = 1$ case: 
\begin{align*}
(- \eps \Delta &v^1_e u^0, u_{\perp yy} \{1 - \chi(\frac{Y}{\gamma}) \} \frac{\langle y \rangle^2}{w}) \\
= & (- \eps \Delta v^1_e u_\perp, u_{\perp yy} \{1 - \chi(\frac{Y}{\gamma}) \} \frac{\langle y \rangle^2}{w}) +  (- \eps \Delta v^1_e \kappa u_\parallel, u_{\perp yy} \{1 - \chi(\frac{Y}{\gamma}) \} \frac{\langle y \rangle^2}{w}) \\
\lesssim & \| \Delta v^1_e  Y^2 \frac{1}{w}\| \| \frac{u_\perp}{\langle y \rangle} \| \| u_{\perp yy} \{1- \chi \} \langle y \rangle \| + |\eps^{\frac{1}{4}}\kappa| \| \Delta v^1_e Y \frac{1}{w} \|_{L^2_Y} \| u_{\perp yy} \{1 - \chi \} \langle y \rangle \| \\
\lesssim & \delta_s \| u^0 \|_B \| u_{\perp yy} \langle y \rangle \{1 - \chi \} \|. 
\end{align*}

The right-hand side is majorized immediately using Cauchy-Schwartz by $\|F \frac{\langle y \rangle}{v^1_e} \{ 1 - \chi(\frac{Y}{\gamma}) \} \| \|u^0_{yy} \{ 1 - \chi(\frac{Y}{\gamma})  \} \langle y \rangle\|$ or by $\| F \eps^{-\frac{1}{4}} \frac{\langle y \rangle}{\sqrt{v^1_e}}  \| \|u^0_{yy} \eps^{\frac{1}{4}} \frac{\langle y \rangle}{\sqrt{v_e^1}} [1 - \chi(\frac{Y}{\gamma}) ]\|$.

\end{proof}

We now close the section with a straightforward $L^\infty$ embedding:
\begin{lemma} \label{lemma.unif.u} \normalfont $|\eps^{\frac{1+}{2}}u_\perp|_\infty \lesssim \| u^0 \|_{B}$.
\end{lemma}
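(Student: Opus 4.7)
The plan is to prove this via a routine two-step Sobolev embedding built from the fundamental theorem of calculus, with the main work being identifying the correct boundary conditions at $y=0$ and $y=\infty$ so that FTC can be applied.

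First, I would establish two boundary identities. Since $u^0(0)=0$ (from the system (\ref{sys.u0})) and $u_{\parallel}(0)=\bar{u}^0_p|_{x=0,y=0}=0$ by definition of the barred profile in (\ref{intro.bar.prof}), the decomposition $u^0=u_{\perp}+\kappa u_{\parallel}$ forces $u_{\perp}(0)=0$. Similarly, $\p_y u^0(\infty)=0$ together with the Prandtl decay $u^0_{py}\to 0$ as $y\uparrow\infty$ (so $u_{\parallel y}(\infty)=0$) yields $u_{\perp y}(\infty)=0$.

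Next, starting from $u_{\perp y}(y)=-\int_y^\infty u_{\perp yy}(z)\ud z$, I apply Cauchy--Schwarz against the weight $\langle z\rangle$:
\[
|u_{\perp y}(y)|^2 \le \|u_{\perp yy}\langle z\rangle\|^2 \cdot \int_y^\infty \langle z\rangle^{-2}\ud z \lesssim \|u_{\perp}\|_{\Upsilon}^2\cdot (1+y)^{-1}.
\]
Integrating again from $0$ and using $u_{\perp}(0)=0$ yields
\[
|u_{\perp}(y)| \le \int_0^y |u_{\perp y}(z)|\ud z \lesssim \|u_{\perp}\|_{\Upsilon}\int_0^y (1+z)^{-1/2}\ud z \lesssim \sqrt{1+y}\,\|u^0\|_{B}.
\]
Multiplying by $\eps^{(1+)/2}$ and restricting to $y\lesssim \eps^{-(1+)}$ gives $\eps^{(1+)/2}|u_{\perp}(y)|\lesssim \|u^0\|_{B}$ in that regime.

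The main obstacle is the tail $y\gg \eps^{-(1+)}$, where $\sqrt{1+y}$ is not absorbed by $\eps^{(1+)/2}$. To close this, I would invoke the extra weighted piece of the $B$-norm, $\|\eps^{1/4}u^0_{yy}\langle y\rangle/\sqrt{v^1_e|_{x=0}}\|\le \|u^0\|_{B}$, in combination with the decay of $v^1_e(Y)$ at $Y\uparrow\infty$ guaranteed by (\ref{assume.Euler.i}). A Cauchy--Schwarz argument analogous to the one above, but using this weight (so that the dual integral $\int_y^\infty v^1_e(\sqrt{\eps}z)\langle z\rangle^{-2}\ud z$ inherits the decay of $v^1_e$ in the outer Euler variable $Y=\sqrt{\eps}y$), produces a much smaller bound on $u_{\perp y}$ in the tail, and a final FTC integration from $y_\ast\sim\eps^{-(1+)}$ outward shows that $u_{\perp}$ does not grow beyond $\eps^{-(1+)/2}\|u^0\|_{B}$. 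Together with the bulk estimate, this yields the claimed uniform $L^\infty$ control.
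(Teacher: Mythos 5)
Your proposal is correct and follows essentially the paper's own argument: both use $u_\perp(0)=0$ and $u_{\perp y}(\infty)=0$ to run FTC/Cauchy--Schwarz in $y$ (the paper phrases the second integration as a Hardy inequality), control the bulk region through $\|u_{\perp yy}\langle y\rangle\|\le\|u_\perp\|_{\Upsilon}$, and control the far field through the weighted piece $\|\eps^{\frac{1}{4}}u^0_{yy}\langle y\rangle/\sqrt{v^1_e|_{x=0}}\|$ of the $B$-norm together with the decay of $v^1_e$ faster than $Y^{-1}$, which (\ref{prof.pick}) supplies. The differences are cosmetic: the paper splits at $Y\sim\gamma$ rather than $y\sim\eps^{-(1+)}$ and uses Hardy in place of your pointwise bound on $u_{\perp y}$, and in the tail one should note, as the paper implicitly does, that replacing $u^0_{yy}$ by $u_{\perp yy}$ costs only a rapidly decaying $\kappa u_{\parallel yy}$ term controlled by $\eps^{\frac{1}{4}}|\kappa|\le\|u^0\|_B$.
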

\begin{proof} First, using $u_\perp(0) = 0$, we write: $u_\perp = \int_0^y  u_{\perp y} \le \|u_{\perp y} \langle y \rangle^{\frac{1+}{2}}\|$. Next, using $u_{\perp y}(\infty) = 0$, we use Hardy to majorize:
\begin{align*}
|u_\perp| \lesssim& \|u_{\perp y}\|_{loc} + \|u_{\perp yy} y^{\frac{3+}{2}}\| \\
\lesssim &\| u^0 \|_{B} + \sqrt{\eps}^{1 - \frac{3+}{2}} \eps^{-\frac{1}{4}} \|\eps^{\frac{1}{4}}u_{\perp yy}y \frac{1}{\sqrt{v_e}}[1 - \chi(\frac{Y}{\gamma}) ]\| \\
& + \sqrt{\eps}^{-(0+)(1 - \frac{3+}{2})} \|u_{\perp yy} y^1 \chi(\eps^{0+}y)\| \\
\lesssim & \eps^{\frac{1 - 2+}{2}} \| u^0 \|_{B}.
\end{align*}

\end{proof}

\subsection{Computation of Degree} \label{section.euler.example}

We now compute $\bold{d}(S(u_\parallel))$. We first use the particular form of $f = f^{(1)}$ given in (\ref{defn.f1.special}) to obtain the condition: 
\begin{align}  \label{part.f}
&\int_0^{\infty} K(y) r(y) \ud y \\ \n
& =  - u_{\parallel y}|_{x = 0}(0) u^1_e|_{x = 0}(0) e^{-\int_1^0 v_{\parallel}} + \int_0^\infty K(y) f(y) \ud y \\ \n
& =  - u_{\parallel y}|_{x = 0}(0) u^1_e|_{x = 0}(0) e^{-\int_1^0 v_{\parallel}} + \int_0^\infty K(y) \{ - u^0_p u^1_{ex}(0) \\ \n
& \hspace{4 mm}- u^0_{px} u^1_e(0)  -\bar{v}^1_{eY}(0) y u^0_{py} -  u^0_{eY}(0) y u^0_{px} - v^0_p u^0_{eY}(0) \\ \n
& \hspace{4 mm} + g^{(1)}_{ext}\}|_{x =0}. 
\end{align}

\noindent Note that we have referred to the definition in (\ref{def.forcing}) and retained the lowest order terms in $f$. We now record the following identity for future use:
\begin{lemma} \normalfont 
\begin{align} 
\begin{aligned} \label{cg.pos}
 \int K(y) \Big[ r(y) &+ v^1_{eY}(0)\{ y u_{\parallel y} - u^0_p \} \Big] \ud y \\
 & =-\int K(y) \Big[ u^0_{eY}(0) y u^0_{px} + v^0_p u^0_{eY}(0)  \Big] \ud y + \int K(y) g^{(1)}_{ext} \ud y. 
 \end{aligned}
\end{align}
\end{lemma}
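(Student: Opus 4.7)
The plan is to start from the explicit computation (\ref{part.f}) of $\int_0^\infty K(y) r(y) \ud y$ and show that the $u^1_e|_{x=0}$-dependent pieces on the right-hand side of (\ref{part.f}) (both the boundary contribution and the two integrals involving $u^1_e(0)$ and $u^1_{ex}(0)$) collapse into the single term $-\int K(y) v^1_{eY}(0)\{y u_{\parallel y} - u^0_p\} \ud y$ of (\ref{cg.pos}), while the remaining terms of (\ref{part.f}) are already in the form required by the right-hand side of (\ref{cg.pos}).

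The first step is to invoke the Euler incompressibility condition $u^1_{ex} + v^1_{eY} = 0$, evaluated at $Y = 0$, which gives $u^1_{ex}|_{x=0}(0) = -v^1_{eY}|_{x=0}(0)$. Since $\bar{v}^1_e = v^1_e - v^1_e|_{Y = 0}$ by (\ref{intro.bar.prof}), one also has $\bar{v}^1_{eY}(0) = v^1_{eY}(0)$. Substituting into (\ref{part.f}) converts $-\int K u^0_p u^1_{ex}(0) \ud y$ into $+\int K u^0_p v^1_{eY}(0) \ud y$, which, combined with $-\int K v^1_{eY}(0) y u^0_{py} \ud y$ and the identity $u^0_{py}|_{x=0} = u_{\parallel y}$, assembles exactly the $-\int K v^1_{eY}(0)\{y u_{\parallel y} - u^0_p\} \ud y$ piece of (\ref{cg.pos}).

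The crux is then to cancel the surviving pair of $u^1_e(0)$-proportional terms, namely the boundary quantity $-u_{\parallel y}(0) u^1_e(0) e^{-\int_1^0 v_\parallel}$ and the integral $-u^1_e(0) \int_0^\infty K(y) u^0_{px}|_{x=0}(y) \ud y$. For this I would use the leading-order Prandtl equation (\ref{Pr.leading.intro}) restricted to $x=0$, namely $u_\parallel u^0_{px} + v_\parallel u_{\parallel y} - u_{\parallel yy} + P^0_{px} = 0$, to eliminate $u^0_{px}$. The key observation is that, by the very definition of $K$, one has $K(y)/u_\parallel(y) = e^{-\int_1^y v_\parallel}$, and hence the exact-derivative identity
\begin{align*}
e^{-\int_1^y v_\parallel}\bigl(u_{\parallel yy} - v_\parallel u_{\parallel y}\bigr) = \p_y\bigl[e^{-\int_1^y v_\parallel}\, u_{\parallel y}\bigr].
\end{align*}
Integrating from $0$ to $\infty$ and using $u_{\parallel y}(\infty) = 0$ then produces precisely $-e^{-\int_1^0 v_\parallel} u_{\parallel y}(0)$, whose product with $-u^1_e(0)$ cancels the boundary term exactly.

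The only technical subtlety is the Prandtl pressure $P^0_{px}$: in the Blasius setting of primary interest here it vanishes, and more generally it can be absorbed into the forcing $g^{(1)}_{ext}$ on the right-hand side of (\ref{cg.pos}) by the same exact-derivative computation applied to the $P^0_{px}/u_\parallel$ contribution. Once the cancellation of the $u^1_e(0)$-dependent terms is established, the remaining terms of (\ref{part.f}), namely $-u^0_{eY}(0) y u^0_{px}$, $-v^0_p u^0_{eY}(0)$, and $g^{(1)}_{ext}$, match those on the right-hand side of (\ref{cg.pos}) term by term, completing the identity.
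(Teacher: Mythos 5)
Your proposal is correct and follows essentially the same route as the paper: the added $v^1_{eY}(0)$ terms cancel the $u^1_{ex}(0)$ and $\bar{v}^1_{eY}(0)$ pieces of (\ref{part.f}) via Euler incompressibility, and the surviving $u^1_e(0)$-proportional terms cancel by substituting the Prandtl equation at $x=0$ and integrating, your exact-derivative identity $\p_y[e^{-\int_1^y v_\parallel}u_{\parallel y}]$ being exactly the paper's integration by parts of the $u^0_{pyy}$ term. Your explicit remark on $P^0_{px}$ is consistent with the paper, which silently drops this term (zero pressure in the Blasius setting).
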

\begin{proof} The $v^1_{eY}(0)$ terms on the left-hand side cancel out the first and third terms in the integral in (\ref{part.f}). This leaves: 
\begin{align*}
\text{LHS of }(\ref{cg.pos})=& - u_{\parallel y}|_{x = 0}(0) u^1_e|_{x = 0}(0)e^{-\int_1^0 v_{\parallel}} + \int K(y) \{ - u^1_e|_{x = 0}(0) u^0_{px} \\ & - u^0_{eY}(0) y u^0_{px} - v^0_p u^0_{eY}(0) + g^{(1)}_{ext} \} \\
= & - u^1_e|_{x = 0}(0) \Big[ u_{\parallel y}|_{x = 0}(0) e^{-\int_1^0 v_{\parallel}} + \int K(y)u^0_{px} \Big] \\
& - \int K(y) \Big[ u^0_{eY}(0) y u^0_{px} + u^0_{eY}(0) v^0_p + g^{(1)}_{ext} \Big] \\
= & - \int K(y) \Big[- g^{(1)}_{ext} + u^0_{eY}(0) y u^0_{px} + u^0_{eY}(0) v^0_p \Big].
\end{align*}

Above, we have used the identity: 
\begin{align*}
 u_{\parallel y}|_{x = 0}(0) &+ \int u_{\parallel} u^0_{px} e^{-\int_0^y v_{\parallel}} \\
 & =  u_{\parallel y}|_{x = 0}(0) - \int v_{\parallel} u^0_{py} e^{- \int_0^y v_{\parallel}} + \int u^0_{pyy} e^{- \int_0^y v_{\parallel}} = 0,
\end{align*}

\noindent upon integrating by parts the $u^0_{pyy}$ term. 
\end{proof}

We will next make a reduction of $\bold{d}(S(u_{\parallel}))$ to the quantity $\mathfrak{n}$, which we now define: 
\begin{align}  \label{ndef}
\mathfrak{n} := & \int_0^\infty K(y) \Big[g^{(1)}_{ext} + u^0_{eY}(0) \{ y u^0_{px} + v^0_p \}  + u^0_e(0) \int_0^\infty \Delta v^1_e \ud Y\Big] \ud y.
\end{align}

\begin{lemma} \normalfont The following inequality holds:
\begin{align*}
|\bold{d}(S(u_{\parallel}))| \gtrsim |\mathfrak{n}| - \sqrt{\eps}. 
\end{align*}
\end{lemma}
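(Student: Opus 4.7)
The plan is to unpack $\bold{d}(S(u_{\parallel}))$ via linearity of the degree into the three natural pieces
\begin{align*}
\bold{d}(S(u_{\parallel})) = \sqrt{\eps}\,\bold{d}(A u_{\parallel}) + \bold{d}(\bar{v}^1_e u_{\parallel yy}) - \eps\,\bold{d}(\Delta v^1_e u_{\parallel}),
\end{align*}
compute each piece explicitly, and reassemble via the already-established identity (\ref{cg.pos}) to recognize the combination as $\sqrt{\eps}\,\mathfrak{n}$ modulo an $O(\eps)$ error.

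For the first piece, the product-rule identity $A u_{\parallel} = \p_y r$, with $r$ as in (\ref{label.A}) and $r(\infty) = 0$ from the decay of the Prandtl profiles ($u_{\parallel y}, \bar{v}^1_{py} \to 0$), yields $I_y[A u_{\parallel}] = r$ and hence $\bold{d}(A u_{\parallel}) = \int_0^\infty K r\, dy$. For the second piece, Taylor expand $\bar{v}^1_e = \sqrt{\eps}\, v^1_{eY}(0)\, y + O(\eps y^2)$ in the Euler variable $Y = \sqrt{\eps}\, y$; the quadratic remainder contributes only $O(\eps)$ against the Gaussian-decaying $K$. Combining this with the antiderivative identity $y u_{\parallel yy} = \p_y[y u_{\parallel y} - u_{\parallel}]$ together with $u_{\parallel}(\infty) = u^0_e(0)$ and $u_{\parallel} - u_{\parallel}(\infty) = u^0_p$ gives $I_y[y u_{\parallel yy}] = y u_{\parallel y} - u^0_p$, so that
\begin{align*}
\bold{d}(\bar{v}^1_e u_{\parallel yy}) = \sqrt{\eps}\, v^1_{eY}(0) \int_0^\infty K(y)\bigl(y u_{\parallel y} - u^0_p\bigr) dy + O(\eps).
\end{align*}
Summing the first two pieces and invoking the identity (\ref{cg.pos}) collapses the combination precisely into $\sqrt{\eps}\int_0^\infty K[g^{(1)}_{ext} - u^0_{eY}(0)(y u^0_{px} + v^0_p)]\, dy + O(\eps)$.

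For the third piece, a multiscale argument is needed. Apply Fubini to rewrite
\begin{align*}
\bold{d}(\Delta v^1_e u_{\parallel}) = -\int_0^\infty \Delta v^1_e(\sqrt{\eps}\, z)\, u_{\parallel}(z)\, M(z)\, dz, \qquad M(z) := \int_0^z K(y)\, dy,
\end{align*}
rescale via $Y = \sqrt{\eps}\, z$, and exploit the rapid (Blasius-type) convergences $u_{\parallel}(Y/\sqrt{\eps}) \to u^0_e(0)$ and $M(Y/\sqrt{\eps}) \to \|K\|_1$ to extract the leading contribution $-\eps^{-1/2}\, u^0_e(0)\, \|K\|_1 \int_0^\infty \Delta v^1_e\, dY$ plus an $O(1)$ remainder (controlled by dominated convergence using that both $u^0_e(0) - u_{\parallel}$ and $\|K\|_1 - M$ decay rapidly in the $y$-scale, while $\Delta v^1_e$ remains bounded in the $Y$-scale via (\ref{assume.Euler.i})). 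Multiplying by $-\eps$ yields $\sqrt{\eps}\, u^0_e(0)\, \|K\|_1 \int_0^\infty \Delta v^1_e\, dY + O(\eps)$.

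Adding the three contributions reconstructs $\sqrt{\eps}\, \mathfrak{n}$ modulo $O(\eps)$, and the desired lower bound follows upon taking absolute values (after the $\sqrt{\eps}$ normalization implicit in the application of this lemma inside Lemma \ref{Lemma.kappa}, where only $|\sqrt{\eps}\kappa\, \bold{d}(S(u_{\parallel}))| \gtrsim \sqrt{\eps}|\kappa|$ is invoked). The main obstacle is the third piece: the Euler/Prandtl rescaling limit must be justified carefully, controlling the remainders against $\Delta v^1_e = (u^0_{eYY}/u^0_e) v^1_e$, whose slow $Y$-scale behavior interacts delicately with the rapid $y$-scale convergence of $u_{\parallel}$ and $M$; the signed assumptions (\ref{charlie.1})--(\ref{charlie.3}) and (\ref{assume.Euler.i}) are precisely what make the leading order extractable and the remainder negligible.
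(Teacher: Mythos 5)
Your proposal is correct, and it follows the same skeleton as the paper's argument: use $A u_{\parallel} = \p_y r$ so that $\bold{d}(Au_{\parallel})=\int K r$, Taylor-expand the Euler coefficients at $Y=0$, invoke the identity (\ref{cg.pos}) to collapse the leading combination, and isolate the contribution $u^0_e(0)\,\|K\|_1\int_0^\infty \Delta v^1_e \,\ud Y$, with all errors one factor of $\sqrt{\eps}$ below leading order. The only real difference is mechanical, in the Euler terms: the paper first rewrites $\frac{\bar v^1_e}{\sqrt{\eps}}u_{\parallel yy}-\sqrt{\eps}\,\Delta v^1_e u_{\parallel}$ as exact $y$-derivatives (splitting $u_{\parallel}=u^0_p+u^0_e(0)$ and pairing $v^1_{eYY}u^0_p$ with $\p_y\{v^1_{eY}u^0_p\}$, which leaves only the small $\sqrt{\eps}\,v^1_{exx}u^0_p$ term and $u^0_e(0)\p_y I_Y[\Delta v^1_e]$) and only then applies $\bold{d}$, whereas you apply $\bold{d}$ to each term separately, using $I_y[y u_{\parallel yy}]=y u_{\parallel y}-u^0_p$ for the $\bar v^1_e$ term and Fubini plus the two-scale bound (the remainder $u^0_e(0)\|K\|_1-u_{\parallel}M$ decays rapidly in $y$ while $\Delta v^1_e$ is bounded, by (\ref{charlie.2}) and smoothness of $v^1_e$ rather than (\ref{assume.Euler.i})) for the $\Delta v^1_e u_{\parallel}$ term; both routes yield the same leading expression and error size, so the difference is bookkeeping, not substance. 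Two minor remarks: (i) you adopt the normalization $Sg=\sqrt{\eps}Ag+\bar v^1_e g_{yy}-\eps\Delta v^1_e g$ of Lemma \ref{L.estimate}, so you obtain $\bold{d}(S(u_{\parallel}))=\sqrt{\eps}\,\mathfrak{n}+O(\eps)$, while the stated inequality and its use in Lemma \ref{Lemma.kappa} correspond to the rescaled operator $Au_{\parallel}+\frac{\bar v^1_e}{\sqrt{\eps}}u_{\parallel yy}-\sqrt{\eps}\Delta v^1_e u_{\parallel}$ that the paper actually computes with; you flag this correctly, and the content is identical up to that overall factor. (ii) Your reassembled expression carries the sign $-u^0_{eY}(0)\{yu^0_{px}+v^0_p\}$ inherited from (\ref{cg.pos}) while (\ref{ndef}) has $+$; this discrepancy already exists inside the paper and is immaterial, since those shear terms are $o(1)$ by (\ref{charlie.2}) and only the $g^{u,1}_{ext,p}$ term drives the lower bound.
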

\begin{proof} We begin with a rewriting of the latter two terms in $S$:
\begin{align*}
& \frac{\bar{v}^1_e}{\sqrt{\eps}} u_{\parallel yy} -  \sqrt{\eps} \Delta v^1_e u_{\parallel} \\
= &  \frac{\bar{v}^1_e}{\sqrt{\eps}} u_{\parallel yy} - \sqrt{\eps}  \Delta v^1_e u^0_p - \sqrt{\eps}  \Delta v^1_e u^0_e(0) \\
= &  \p_y \{ \frac{\bar{v}^1_e}{\sqrt{\eps}} u_{\parallel y} - v^1_{eY} u^0_p \} - \sqrt{\eps}  v^1_{exx} u^0_p - \sqrt{\eps}  u^0_e(0) \Delta v^1_e \\
= &  \p_y \{ \frac{\bar{v}^1_e}{\sqrt{\eps}} u_{\parallel y} - v^1_{eY} u^0_p \} - \sqrt{\eps}  v^1_{exx} u^0_p - u^0_e(0)  \p_y   I_Y[ \Delta v^1_e]
\end{align*}

Combining now with $A  u_{\parallel} = r'(y)$, we take $\bold{d}(\cdot)$:
\begin{align*}
\bold{d} \Big(  A u_{\parallel} &+  \frac{\bar{v}^1_e}{\sqrt{\eps}} u^0_{\parallel yy} -  \sqrt{\eps} \Delta v^1_e u^0_{\parallel} \Big) \\
= & \int_0^\infty K(y) \Big[  r(y) +  \frac{\bar{v}^1_e}{\sqrt{\eps}} u^0_{\parallel y} -   v^1_{eY} u^0_p  \Big] \ud y \\
& - \int K(y) \sqrt{\eps}  v^1_{exx} u^0_p \ud y  - \int K(y)  u^0_e(0) I_Y[\Delta v^1_e] \ud y. 
\end{align*}

We estimate the middle term immediately via: 
\begin{align*}
|\int K(y) \sqrt{\eps}  v^1_{exx} u^0_p| \lesssim \sqrt{\eps}  \| K \| \| u^0_p \| \| v^1_{exx} \|_\infty. 
\end{align*}

The lowest order terms are now collected from the first integral upon using the identity (\ref{cg.pos}):
\begin{align*}
 \int K(y) &\Big[ r(y) + \bar{v}^1_{eY}(0) u^0_{\parallel y} y - v^1_{eY}(0) u^0_p \Big] \ud y \\
= & -\int K(y) \Big[ u^0_{eY}(0) y u^0_{px} + v^0_p u^0_{eY}(0) \Big] \ud y + \int K(y) g^{u, 1}_{ext, p} \ud y. 
\end{align*} 

We have Taylor expanded: 
\begin{align*}
\bar{v}^1_e = \bar{v}^1_{eY}(0) Y + \phi(Y) Y^2, \hspace{3 mm} v^1_{eY} = v^1_{eY}(0) + Y \varphi(Y),
\end{align*}

\noindent which, upon inserting into the first integral produces the following error terms: 
\begin{align*}
|\int K(y) \Big[  \sqrt{\eps} y^2 \phi(Y) u^0_{\parallel y} -  \sqrt{\eps} \varphi(Y) y u^0_p \Big] \ud y| \lesssim \sqrt{\eps} . 
\end{align*}

The lowest order term from the third integral reads: 
\begin{align*}
- \int K(y)  & u^0_e(0) I_0[ \Delta v^1_e] \ud y  = -  u^0_e(0) \int_0^\infty K(y) \ud y \int_0^\infty \Delta v^1_e \ud Y. 
\end{align*}

A similar Taylor expansion shows that the error term can be majorized by $\sqrt{\eps} |\kappa|$. We thus arrive at the following leading order expression: 
\begin{align} \label{loe}
\int_0^\infty K(y) \Big[g^{u, 1}_{ext, p} + u^0_{eY}(0) \{ y u^0_{px} + v^0_p \}  + u^0_e(0) \int_0^\infty \Delta v^1_e \ud Y\Big] \ud y,
\end{align}

\noindent which proves the desired estimate. 

\end{proof}

\begin{corollary}
\begin{align*}
|\bold{d}(S(u_{\parallel}))| \gtrsim 1. 
\end{align*}
\end{corollary}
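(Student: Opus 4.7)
My plan is to read the corollary as an immediate consequence of the preceding lemma once the $\sqrt{\eps}$ remainder is absorbed, so the real work is reduced to establishing the uniform lower bound $|\mathfrak{n}| \gtrsim 1$ on the quantity defined in (\ref{ndef}). Since the lemma gives $|\bold{d}(S(u_\parallel))| \gtrsim |\mathfrak{n}| - \sqrt{\eps}$, once $|\mathfrak{n}| \gtrsim 1$ is shown, I would simply take $\eps \ll 1$ (which is already in force in the hypothesis of Theorem \ref{thm.main}) and conclude $|\bold{d}(S(u_\parallel))| \gtrsim 1 - O(\sqrt{\eps}) \gtrsim 1$.

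To establish $|\mathfrak{n}| \gtrsim 1$, I would decompose $\mathfrak{n}$ into the three pieces appearing in (\ref{ndef}): the external forcing contribution $\int_0^\infty K(y) g^{u,1}_{ext,p}(y)\,\ud y$; the Prandtl-profile contribution $u^0_{eY}(0) \int_0^\infty K(y)\{y u^0_{px} + v^0_p\}\,\ud y$; and the Euler-profile contribution $u^0_e(0) \int_0^\infty K(y)\,\ud y \cdot \int_0^\infty \Delta v^1_e\,\ud Y$. Observe that the latter two terms are fixed, $\eps$-independent quantities determined entirely by the prescribed Euler shear $u^0_e$ and the boundary data $u^0_p|_{x = 0}, v^0_p|_{x = 0}$. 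They are bounded in absolute value by a universal constant, using the finiteness of $\|K\|_1$, $\|u^0_{px}\langle y\rangle\|$, $\|v^0_p\|$ (from Theorem \ref{thm.construct}) together with the assumption (\ref{assume.Euler.i}) on the decay of $v^1_e|_{x = 0}$.

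The non-degeneracy condition (\ref{charlie.3}) is precisely engineered so that the forcing contribution dominates these fixed corrections. Concretely, I would view (\ref{charlie.3}) as selecting $g^{u,1}_{ext,p}$ so that $|\int K g^{u,1}_{ext,p}|$ exceeds the sum of the two fixed contributions by a definite amount, so that a triangle inequality gives $|\mathfrak{n}| \gtrsim 1$. Equivalently, since the Prandtl/Euler corrections are prescribed constants, one may simply absorb them into the definition of $g^{u,1}_{ext,p}$, and (\ref{charlie.3}) is then the statement that the total $\mathfrak{n}$ is bounded away from zero.

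The only conceptual, rather than technical, obstacle is the reading of (\ref{charlie.3}): one must recognize that although the condition is written formally as a size requirement on the integral of $K g^{u,1}_{ext,p}$ alone, its actual role is to enforce non-cancellation in the full degree $\mathfrak{n}$. Once this is granted, the corollary is immediate; no further integration by parts or estimation is needed. I would end the proof by remarking that in the Blasius setting of principal interest, one can verify the non-degeneracy explicitly, consistent with the discussion in Subsection \ref{section.euler.example}.
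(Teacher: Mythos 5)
There is a genuine gap in the middle of your argument, at the step where you bound the Prandtl and Euler contributions in (\ref{ndef}) only by ``a universal constant'' and then conclude $|\mathfrak{n}|\gtrsim 1$ by reinterpreting (\ref{charlie.3}) as a non-cancellation condition on the \emph{full} degree. The hypothesis (\ref{charlie.3}) is literally a lower bound on $|\int_0^\infty K(y)\, g^{u,1}_{ext,p}\,\ud y|$ alone; if the remaining two terms of $\mathfrak{n}$ are merely $O(1)$, they could cancel the forcing integral, and the triangle inequality you invoke does not close. Your suggested remedy --- ``absorb them into the definition of $g^{u,1}_{ext,p}$'' --- amounts to changing the hypothesis rather than proving the corollary under the stated assumptions: those terms are determined by the profiles $u^0_p, v^0_p, v^1_e$ and the shear $u^0_e$, not by the prescribed external forcing, so they cannot be folded into (\ref{charlie.3}).

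The ingredient you are missing is the shear-smallness assumption (\ref{charlie.2}), $\|u^0_{eYY}\langle Y\rangle^2\|_\infty = o(1)$, which is exactly what the paper uses: it makes the non-forcing terms in $\mathfrak{n}$ \emph{small} (size $\delta_s$), not just bounded. Indeed, since the derivatives of $u^0_e$ decay, $u^0_{eY}(0) = -\int_0^\infty u^0_{eYY}\,\ud Y$ is $o(1)$ under (\ref{charlie.2}), which kills the second term of (\ref{ndef}); and the Euler equation $\Delta v^1_e = \frac{u^0_{eYY}}{u^0_e} v^1_e$ shows $\int_0^\infty \Delta v^1_e\,\ud Y = o(1)$ as well, which kills the third. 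With these, $|\mathfrak{n}| \ge |\int K g^{u,1}_{ext,p}| - O(\delta_s) \gtrsim 1$ follows from (\ref{charlie.3}) taken at face value, and then your first step (absorbing the $\sqrt{\eps}$ error from the preceding lemma for $\eps \ll 1$) is fine. So the overall skeleton of your proof is right, but the quantitative mechanism that rules out cancellation is (\ref{charlie.2}), not a reinterpretation of (\ref{charlie.3}).
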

\begin{proof} By the previous lemma, it suffices to provide a lower bound on $\mathfrak{n}$. By invoking the assumption that all the shear terms are size $\delta_s$, we obtain 
\begin{align*}
|\mathfrak{n}| \gtrsim & |\int_0^\infty K(y)g^{u, 1}_{ext, p} \ud y| - \bigO(\delta_s) \gtrsim 1, 
\end{align*}

\noindent by our non-degeneracy assumption. 

\end{proof}

\section{Solution to DNS and NS} \label{Subsection.nonlin}

\subsection{Nonlinear \textit{a-priori} Estimate}

Define the following linear combinations: 

\begin{definition} \normalfont
\begin{align*}
&\mathcal{N}_{X_1} := (\p_x N, q_x + q_{xx} + q_{yy} + \eps^{-\frac{3}{8}}\eps^2 v_{xxxx} + \eps^{-\frac{3}{8}}\eps^{-\frac{1}{8}}\eps u_s v_{xxyy}), \\
&\mathcal{N}_{Y_w} := (\p_x N, \{ \eps q_x + \eps q_{xx} + \eps q_{yy} + \eps^2 v_{xxxx} + \eps u_s v_{xxyy}\} w^2 \\
& \hspace{10 mm} + \{\eps^2 v_{xxxx} + \eps^{-\frac{1}{8}} \eps u_s v_{xxyy} \}) \\
&\mathcal{B}_{X_1} := (F_{u^0}, q_x + q_{xx} + q_{yy} + \eps^{-\frac{3}{8}}\eps^2 v_{xxxx} + \eps^{-\frac{3}{8}}\eps^{-\frac{1}{8}}\eps u_s v_{xxyy}), \\
&\mathcal{B}_{Y_w} := (F_{u^0}, \{ \eps q_x + \eps q_{xx} + \eps q_{yy} + \eps^2 v_{xxxx} + \eps u_s v_{xxyy}\} w^2 \\
& \hspace{10 mm} + \{\eps^2 v_{xxxx} + \eps^{-\frac{1}{8}} \eps u_s v_{xxyy} \}) \\
&\mathcal{F}_{X_1} := (g_{(q)}, q_x + q_{xx} + q_{yy} + \eps^{-\frac{3}{8}}\eps^2 v_{xxxx} + \eps^{-\frac{3}{8}}\eps^{-\frac{1}{8}}\eps u_s v_{xxyy}), \\
&\mathcal{F}_{Y_w} := (g_{(q)}, \{ \eps q_x + \eps q_{xx} + \eps q_{yy} + \eps^2 v_{xxxx} + \eps u_s v_{xxyy}\} w^2 \\
& \hspace{10 mm} + \{\eps^2 v_{xxxx} + \eps^{-\frac{1}{8}} \eps u_s v_{xxyy} \})
\end{align*}
\end{definition}

\begin{lemma}[Boundary Estimates] \normalfont Let $j = 0, 1$. The following estimate holds: 
\begin{align} \label{key.z.est}
\| \p_x^j \{ F_{u^0} \} w \frac{\langle y \rangle}{u_s}\|_{x = 0} \lesssim \begin{cases} \eps^{-\frac{1}{4}} \|u^0 \|_{B} \text{ if } w = 1, \\ \eps^{-\frac{3}{4}} \| u^0 \|_{B} \text{ if } w = \frac{1}{v_e} \langle y \rangle \\
\eps^{-\frac{1}{4}} \|u^0 \|_B \text{ if } w = \frac{1}{v_e} \end{cases}. 
\end{align}
\end{lemma}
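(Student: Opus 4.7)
The plan is to expand $F_{u^0} = v_{sx} u^0_{yy} - u^0 \Delta_\eps v_{sx}$ at $\{x=0\}$ explicitly, exploit the Prandtl/Euler decomposition of $v_s$ from (\ref{exp.u}), and then control each piece against the three components appearing in the $B$-norm (\ref{B.norm}): the $\Upsilon$-seminorm of $u_\perp$, the scalar $\eps^{1/4}|\kappa|$, and the weighted $L^2$ norm $\|\eps^{1/4} u^0_{yy} \langle y\rangle/\sqrt{v^1_e|_{x=0}}\|$. Because $u^0$ and $u^0_{yy}$ are $y$-only, the only $x$-dependence lies in $v_s$, so $\p_x^j F_{u^0}|_{x=0} = \p_x^{j+1} v_s|_{x=0} \cdot u^0_{yy} - u^0 \cdot \Delta_\eps \p_x^{j+1} v_s|_{x=0}$ for $j = 0, 1$, and the $j=0$ and $j=1$ cases have identical structure.

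For the $v_{sx} u^0_{yy}$ term I would factor the integrand as
\begin{equation*}
v_{sx} u^0_{yy} \, w \, \tfrac{\langle y\rangle}{u_s} = \Bigl(\tfrac{v_{sx} w \sqrt{v^1_e|_{x=0}}}{\eps^{1/4} u_s}\Bigr) \cdot \Bigl(\tfrac{\eps^{1/4} u^0_{yy} \langle y\rangle}{\sqrt{v^1_e|_{x=0}}}\Bigr),
\end{equation*}
apply Cauchy--Schwarz, and bound the second factor by $\|u^0\|_B$ directly from (\ref{B.norm}). It then remains to bound the $L^\infty_y$ norm of the multiplier by the appropriate $\eps$-power. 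The Prandtl piece $v^0_{px}|_{x=0}$ vanishes quadratically at $y=0$ by the Blasius asymptotics (\ref{blasius}) while $u_s \sim u_{sy}(0)\, y$, so $v^0_{px}/u_s$ is bounded; the Euler piece $v^1_{ex}$ is comparable to $v^1_e$ via (\ref{assume.Euler.i}), cancelling the $\sqrt{v^1_e}$ in the numerator. The three $\eps$-rates then reduce to estimating $\|w \sqrt{v^1_e}\|_\infty$: the choices $w=1$ and $w=1/v^1_e$ both give $O(1)$ and hence overall rate $\eps^{-1/4}$, while $w=\langle y\rangle/v^1_e$ picks up an additional $\langle y\rangle \lesssim \eps^{-1/2}$ in the Euler far-field where $1/v^1_e$ does not localize, producing $\eps^{-3/4}$.

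For the $u^0 \Delta_\eps v_{sx}$ term I split $u^0 = u_\perp + \kappa u_\parallel$. Since $u_\parallel = \bar u^0_p|_{x=0}$ shares the same linear Blasius behavior as $u_s$ at the wall and both approach positive constants at infinity, $u_\parallel/u_s \in L^\infty$ pointwise; combined with $\eps^{1/4}|\kappa| \le \|u^0\|_B$ and with $\Delta_\eps v_{sx}$ controlled pointwise using an $O(1)$ Prandtl contribution together with an $O(\eps)$ Euler contribution (the latter via $\Delta v^1_e = \tfrac{u^0_{eYY}}{u^0_e} v^1_e$ and the smallness assumption (\ref{charlie.2})), this yields the $|\kappa|$-contribution at the claimed rate. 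For the $u_\perp$ piece, one uses Hardy at the wall ($u_\perp(0)=0$ gives $\|u_\perp/u_s\|_{L^\infty_{loc}} \lesssim \|u_{\perp y}\|_{loc} \lesssim \|u_\perp\|_\Upsilon$) and Hardy at infinity ($\|u_\perp / \langle y\rangle\| \lesssim \|u_\perp\|_\Upsilon \lesssim \|u^0\|_B$), absorbing the $\langle y\rangle/u_s$ factor with the same $\eps$-accounting as above.

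The main obstacle I anticipate is matching the $\eps^{-3/4}$ rate in the case $w = \langle y\rangle/v^1_e$, which is the only one with an enhanced loss. The resolution, as above, is that $1/v^1_e$ remains bounded below by a positive constant on compact sets in $Y$ (by (\ref{charlie.1})) but does not decay in $y$, so the weight $\langle y\rangle/v^1_e$ attains its full far-field size $\langle y\rangle \sim \eps^{-1/2}$ precisely in the Euler region, which is also where the Euler piece of $v_{sx}$ saturates; these two Euler-scale factors combine to give the stated additional $\eps^{-1/2}$ loss relative to $w=1$. The $j=1$ version involves $v_{sxx}|_{x=0}$ and its Laplacian, which by direct inspection of (\ref{exp.u}) inherit the same Prandtl/Euler separation and the same Blasius-type vanishing at the wall, so the argument transfers without essential change.
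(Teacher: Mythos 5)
Your proposal is correct and follows essentially the same route as the paper's proof: the same splitting of $F_{u^0}$ into $v_{sx}u^0_{yy}$ and $u^0\Delta_\eps v_{sx}$, the wall vanishing $v_{sx}|_{y=0}=0$ to tame $1/u_s$, the decomposition $u^0=u_\perp+\kappa u_\parallel$ with Hardy at the wall and at infinity, and the Prandtl-decay versus Euler $Y$-scaling bookkeeping that produces the $\eps^{-\frac14}$ and $\eps^{-\frac34}$ rates. The only (harmless) deviation is that you pair the whole $v_{sx}u^0_{yy}$ term globally against the weighted component $\|\eps^{\frac14}u^0_{yy}\langle y\rangle/\sqrt{v^1_e}\|$ of the $B$-norm, whereas the paper splits into the regions $Y\lesssim 1$ and $Y\gtrsim 1$ (using $\|u_{\perp yy}\langle y\rangle\|$ and $\eps^{\frac14}|\kappa|$ near the wall), both yielding the stated rates under the same implicit far-field bounds on $v_{sx}Y^m/\sqrt{v^1_e}$.
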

\begin{proof} First, set $j = 0$. The $j = 1$ case follows in an identical manner. First, let $w = 1$. We estimate immediately upon consulting (\ref{B.norm}): 
\begin{align*}
\|\frac{v_{sx}}{u_s} u^0_{yy} \langle y \rangle\| \le & \| \frac{v_{sx}}{u_s} \|_\infty \{ \| u_{\perp yy} \langle y \rangle \| + \eps^{-\frac{1}{4}}|\eps^{\frac{1}{4}}\kappa| \| u_{\parallel yy} \langle y \rangle \| \\
\lesssim &\eps^{-\frac{1}{4}} \|u^0 \|_B.  
\end{align*}

\noindent  Above we have used that $v_{sx}|_{y = 0} = 0$, and so $v_{sx}(0,0) = 0$. Next, we treat $|u^0 \Delta_\eps v_{sx} \frac{\langle y \rangle}{u_s}\|$. For the region $y \le 1$, we estimate easily using the boundary condition $u^0(0) = 0$: 
\begin{align*}
\|u^0 \Delta_\eps v_{sx} \frac{\langle y \rangle}{u_s} \chi\| \le \|\Delta_\eps v_{sx} \langle y \rangle \chi\|_\infty \| u^0, u^0_y \|_{loc} \lesssim \eps^{-\frac{1}{4}} \|u^0 \|_B.
\end{align*}

For the region $y \ge 1$, we may omit the weight of $u_s$ and simply estimate: 
\begin{align*}
\| u^0 \Delta_\eps v_{sx} y\| \le \| \Delta_\eps v_{sx} y^2 \|_\infty \| \frac{u^0}{y} \| \lesssim \| u^0_y \| \lesssim \eps^{-\frac{1}{4}} \| u^0 \|_B.
\end{align*}

Next, we let $w = \frac{1}{v_e} \langle y \rangle^m$ for $m = 0, 1$.  
\begin{align*}
\|v_{sx} u^0_{yy} & \langle y \rangle^{m+1} \frac{1}{v_e} [1-\chi(Y)] \| \\
& \le \eps^{-\frac{1}{4}-\frac{m}{2}} \| \frac{ v_{sx}}{\sqrt{v_e}} Y^m\|_\infty \| \frac{\eps^{\frac{1}{4}}}{\sqrt{v_e}} u^0_{yy} \langle y \rangle [1-\chi(Y)]\| \lesssim  \eps^{-\frac{1}{4}- \frac{m}{2}} \|u^0 \|_{B}.
\end{align*}

Next, 
\begin{align*}
&\|v_{sx} u_{\perp yy} \langle y \rangle^{m+1} \frac{1}{v_e} \chi(Y)\| \lesssim  \eps^{-\frac{m}{2}} \|Y^m \frac{1}{v_e} \chi(Y)\|_\infty \|u_{\perp yy} \langle y \rangle\| \lesssim  \eps^{-\frac{m}{2}} \| u^0 \|_{B}, \\
&\| v_{sx} \kappa u_{\parallel yy} \langle y \rangle^{m+1} \frac{1}{v_e} \| \lesssim \eps^{-\frac{1}{4}} |\eps^{\frac{1}{4}} \kappa| \lesssim \eps^{-\frac{1}{4}} \| u^0 \|_B.
\end{align*}

We now move to $\| u^0 \Delta_\eps v_{sx} y^2 \frac{1}{v_e}\|$. It is convenient to split: 
\begin{align*}
\Delta_\eps v_{sx} = \sum_{i = 0}^n \sqrt{\eps}^i \Delta_\eps v^i_{px} + \eps \sum_{j = 1}^n \sqrt{\eps}^{j-1} \Delta v^j_{ex}.
\end{align*}

We treat the $i = 0$ case, with the higher order Prandtl terms following similarly. First, using the rapid decay of $v^0_p$, we estimate: 
\begin{align*}
&\| u_\perp \Delta_\eps v^0_{px} \frac{\langle y \rangle^{m+1}}{v^1_e} \| \lesssim \| \frac{u_\perp}{y} \| \lesssim \| u^0 \|_B \\
&\| \kappa u_\parallel \Delta_\eps v^0_{px} \frac{\langle y \rangle^{m+1}}{v^1_e} \| \lesssim \eps^{-\frac{1}{4}} \| u^0 \|_B.
\end{align*}

Next, we move to the Euler contributions. We treat the $j = 1$ case, with the higher order Euler terms following similarly:
\begin{align*}
&\| u_\perp \eps \Delta v^1_{ex} \frac{\langle y \rangle^{m+1}}{v^1_e|_{x = 0}} \| \lesssim \eps \eps^{-\frac{1}{2}(m+2)} \|\frac{u_\perp}{\langle y \rangle} \|_2 \| \Delta v^1_{ex} Y^{m+2} \frac{1}{v^1_e|_{x = 0}} \|_\infty, \\
& \hspace{29 mm} \lesssim \eps^{-\frac{m}{2}} \| u^0 \|_B. \\
&\| \kappa u_{\parallel} \eps \Delta v^1_{ex} \frac{\langle y \rangle^{m+1}}{v^1_e|_{x = 0}} \| \lesssim \eps \eps^{-\frac{1}{2}(m+1)} \eps^{-\frac{1}{4}} |\eps^{\frac{1}{4}}\kappa| \eps^{-\frac{1}{4}} \| \Delta v^1_{ex} Y^{m+1} \frac{1}{v^1_e|_{x = 0}}\|_{L^2_Y} \\
& \hspace{29 mm} \lesssim \eps^{-\frac{m}{2}} \| u^0 \|_B. 
\end{align*}

This concludes the proof.

\end{proof}

\begin{lemma}
\begin{align}
&|\mathcal{B}_{X_1}| \lesssim \eps^{-\frac{1}{2}} \| u^0 \|_B^2 + o(1) \| v \|_{X_1}^2\\
&|\mathcal{B}_{Y_{w_0}}| \lesssim L \eps^{-\frac{1}{2}} \| u^0 \|_B^2 + o(1) \| v \|_{Y_{w_0}}^2 \\
&|\mathcal{B}_{Y_{w_1}}| \lesssim \eps^{\frac{3}{8}} \| u^0 \|_B^2 + o(1) \| v \|_{Y_{w_1}}^2. 
\end{align}
\end{lemma}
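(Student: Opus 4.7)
The strategy is to expand each of $\mathcal{B}_{X_1}, \mathcal{B}_{Y_{w_0}}, \mathcal{B}_{Y_{w_1}}$ as a sum of inner products $(F_{u^0}, T_i)_\Omega$ over the listed test functions $T_i$, and to estimate each pairing individually via Cauchy--Schwarz. The weight is chosen so that the $F_{u^0}$-factor matches one of the three cases of the boundary estimate (\ref{key.z.est}), while the $T_i$-factor is controlled by the relevant norm $\|v\|_{X_1}$, $\|v\|_{Y_{w_0}}$, or $\|v\|_{Y_{w_1}}$ through the definitions in (\ref{defn.norms.ult}). A final application of Young's inequality, with parameter tuned so as to produce coefficient $\eps^{-1/2}$, $L\eps^{-1/2}$, or $\eps^{3/8}$ on $\|u^0\|_B^2$ and $o(1)$ on the appropriate $v$-norm, then delivers the three claims.

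The essential preliminary is to upgrade (\ref{key.z.est}) from an $L^2_y$-bound at $\{x = 0\}$ to an $L^2(\Omega)$-bound. Because $F_{u^0} = v_{sx}u^0_{yy} - u^0 \Delta_\eps v_{sx}$ depends on $x$ only through the smooth, uniformly bounded background profiles $v_{sx}$ and $\Delta_\eps v_{sx}$, the argument proving (\ref{key.z.est}) applies verbatim on $\Omega$, producing only an extra factor of $\sqrt{L}$ from integrating the essentially $x$-constant $L^2_y$-bound over $x \in [0, L]$. Alternatively, one may write $F_{u^0}(x, y) = F_{u^0}(0, y) + \int_0^x F_{u^0, x'}(x', y)\ud x'$ and combine the $j = 0$ and $j = 1$ cases of (\ref{key.z.est}) via Cauchy--Schwarz in $x$. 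This $\sqrt{L}$-gain is what eventually produces the explicit $L$ in the $\mathcal{B}_{Y_{w_0}}$ estimate.

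For each test function $T_i$, I would apply Cauchy--Schwarz with matching weight $w \langle y \rangle/u_s$, and then bound $\|T_i u_s/(w\langle y\rangle)\|_{L^2(\Omega)}$ by the appropriate components of $|||q|||_w$ or $||||v||||_w$. The $q_x$-pairings do not appear directly in any norm, so for these I would use the boundary condition $q|_{x=0} = 0$ (inherited from $v|_{x=0}=0$) together with a Poincar\'e inequality in $x$ to absorb $q_x$ into $\|q_{xx} u_s w\|$, a component of $\|\nabla_\eps q_x u_s w\|$; this produces an additional small factor of $L^{1/2}$. For $v_{xxxx}$- and $v_{xxyy}$-pairings, I would integrate by parts in $x$, using $v_{xx}|_{x=0} = v_{xxx}|_{x=0} = 0$ to kill the left boundary terms; the resulting right boundary terms at $\{x = L\}$ are controlled by the trace components $|v|_{\p, 3, w}$ already contained in $||||v||||_w$.

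The main obstacle will be tracking $\eps$-powers precisely to land at the three different coefficients stated. The $\eps^{-1/2}$ in $\mathcal{B}_{X_1}$ arises from combining (\ref{key.z.est}) at $w = 1$ (coefficient $\eps^{-1/4}$) with the $\eps^{-3/16}$-scaling of $||||v||||_1$ relative to $\|v\|_{X_1}$ and Young's inequality. The extra factor $L$ in $\mathcal{B}_{Y_{w_0}}$ is absorbed from both the $\sqrt{L}$-gain in the preliminary step and the $L^{1/2}$-gain from Poincar\'e in $x$ applied to $q$. The improvement to $\eps^{3/8}$ in $\mathcal{B}_{Y_{w_1}}$ arises because $\|v\|_{Y_{w_1}} = ||||v||||_1 + \sqrt{\eps}|||q|||_1$ carries an additional $\sqrt{\eps}$ factor on $|||q|||_1$ relative to $\|v\|_{X_1}$; this smallness must compensate the slightly worse scalings of the $F_{u^0}$-estimate at the weighted choices of $w$, and must be verified term-by-term to ensure that the $o(1)$-coefficient in front of $\|v\|_{Y_{w_1}}^2$ is preserved.
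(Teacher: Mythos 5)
Your overall architecture (Cauchy--Schwarz against the weighted bound (\ref{key.z.est}), with a $\sqrt{L}$ gain from $x$-integration, then Young) matches the paper for the weighted cases, but it has a genuine gap for the unweighted pairings in $\mathcal{B}_{X_1}$. In the norm (\ref{defn.norms.ult}) the quantity $q_{xx}$ is only controlled with a $\sqrt{\eps}$ attached (the energy coming from $-\p_x R[q]$ controls $\|u_s q_{xy} w\|$ and $\sqrt{\eps}\|u_s q_{xx} w\|$, and this is how the paper itself uses $|||q|||_w$, e.g.\ $\|\sqrt{\eps} q_{xx} w\| \lesssim |||q|||_w$). Hence a bare Cauchy--Schwarz on $(F_{u^0}, q_{xx})$ with $w=1$ costs $\eps^{-\frac{1}{4}}\cdot\eps^{-\frac{1}{2}}\|u^0\|_B\,|||q|||_1$, and after Young with an $o(1)$ coefficient on $\|v\|_{X_1}^2$ you land at $L\eps^{-\frac{3}{2}}\|u^0\|_B^2$, not the claimed $\eps^{-\frac{1}{2}}\|u^0\|_B^2$; since $\eps \ll L$, this loss cannot be absorbed. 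The paper's proof avoids this precisely by \emph{not} using Cauchy--Schwarz on this term: it writes $F_{u^0} = \p_y\{v_{sx}u^0_y - v_{sxy}u^0\} - \eps v_{sxxx}u^0$, integrates by parts in $y$ and then in $x$ (see (\ref{treat.bxx})), trading $q_{xx}$ for $q_{xy}$ (no $\eps$ penalty) plus boundary terms at $x=0,L$ controlled by $|q|_{\p,2,1}$, and uses $v_{sx}|_{y=0}=0$ to tame the $1/u_s$ degeneracy. This integration by parts for the $q_{xx}$ pairing is the key step your proposal omits (you reserve integration by parts only for the $v_{xxxx}$ and $v_{xxyy}$ pairings).

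Two further points would need repair. First, your treatment of the $q_x$ pairing is not correct as stated: $q|_{x=0}=0$ gives Poincar\'e for $q$ in terms of $q_x$, not for $q_x$ in terms of $q_{xx}$, and $q_x$ does not vanish at $x=L$ (only $v_x|_{x=L}=0$, so $q_x|_{x=L} = -\tfrac{u_{sx}}{u_s}q|_{x=L}$); moreover even a repaired Poincar\'e route reintroduces the $\sqrt{\eps}q_{xx}$ penalty at $w=1$. The paper instead pairs $F_{u^0}$ with an extra $\langle y\rangle$ and controls $\|q_x \langle y\rangle^{-1} w\|$ by a Hardy inequality in $y$. Second, for the $v_{xxxx}$ and $v_{xxyy}$ pairings you have the boundary conditions reversed: the homogenized conditions in (\ref{eqn.dif.1.app}) are $v|_{x=0}=v_{xx}|_{x=0}=0$ and $v_x|_{x=L}=v_{xxx}|_{x=L}=0$, so after one integration by parts in $x$ it is the $\{x=L\}$ traces ($v_{xxx}$, $v_{xyy}$) that vanish, while the surviving $\{x=0\}$ traces are exactly the ones controlled by $\|\eps^{\frac{3}{2}}\sqrt{u_s}v_{xxx}w\|_{x=0}$ and $\|\sqrt{\eps}u_s v_{xyy}w\|_{x=0}$ in $|v|_{\p,3,w}$.
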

\begin{proof} First, fix $w = 1$. We compute by integrating by parts first in $y$ and second in $x$ for the first term below: 
\begin{align} \n
(F_{u^0}, q_{xx}) = &(\p_y \{ v_{sx}u^0_y - v_{sxy} u^0 \}, q_{xx}) - (\eps v_{sxxx} u^0, q_{xx}) \\ \n
= & -(v_{sx}u^0_y - v_{sxy} u^0, q_{xxy})   - (\eps v_{sxxx} u^0, q_{xx}) \\ \n
= & (v_{sxx}u^0_{y} - v_{sxxy}u^0, q_{xy})  - (v_{sx}u^0_{y} - v_{sxy}u^0, q_{xy})_{x = L} \\ \label{treat.bxx}
& +  (v_{sx}u^0_{y} - v_{sxy}u^0, q_{xy})_{x = 0} - (\eps v_{sxxx} u^0, q_{xx}).
\end{align}

\noindent  We first estimate (\ref{treat.bxx}.3) and  (\ref{treat.bxx}.4):
\begin{align*}
&|(\ref{treat.bxx}.3)| \lesssim  \Big[ \| \frac{v_{sx}}{u_s}\|_\infty \| u^0_y\| + \|v_{sxy} \langle y \rangle \| \|\frac{u^0}{u_s \langle y \rangle} \| \Big] \|u_s q_{xy} \|_{x = 0} \\
& \hspace{12 mm} \lesssim \eps^{-\frac{1}{4}} \|u^0 \|_B |||q|||_1, \\
&|(\ref{treat.bxx}.4)| \le   \|v_{sxxx} \sqrt{\eps} y\|_\infty \|\frac{u^0}{\langle y \rangle}\| \| \sqrt{\eps} q_{xx} \| \lesssim \eps^{-\frac{1}{4}} \| u^0 \|_B |||q|||_1.
\end{align*}

\noindent  Above, we have used that $v_{sx}|_{y = 0} = 0$ and that: 
\begin{align*}
&\| v_{sxy} \langle y \rangle \|_\infty \le \sum_{i = 0}^n \sqrt{\eps}^i \| v^i_{pxy} \langle y \rangle \|_\infty +  \sum_{i = 1}^n \sqrt{\eps}^{i-1}\| v^i_{exy} \sqrt{\eps} y \|_\infty \lesssim 1, \\
&\|v_{sxxx} \sqrt{\eps}y \|_\infty \lesssim  \sqrt{\eps} \sum_{i = 0}^n \sqrt{\eps}^i \|v^i_{pxx} \langle y \rangle\|_\infty + \sum_{i = 1}^n \sqrt{\eps}^{i-1} \| v^i_{exxx} \sqrt{\eps}y \|_\infty. 
\end{align*} 

For the $w \neq 1$ case, we invoke the bottom estimate in (\ref{key.z.est}):
\begin{align*}
|(v_{sx}u^0_{yy} - u^0 \Delta_\eps v_{sx}, q_{xx} w^2)| \le& \eps^{-\frac{1}{2}} \sqrt{L} \| \{v_{sx} u^0_{yy} - u^0 \Delta_\eps v_{sx} \} w \| \| \sqrt{\eps} q_{xx} w \| \\
\lesssim &\eps^{-\frac{3}{4}} \sqrt{L} \| u^0 \|_B |||q|||_w. 
\end{align*}

\begin{align*}
|(F_{u^0}, q_{yy} w^2)| \lesssim & \sqrt{L} |(v_{sx}u^0_{yy} - u^0 \Delta_\eps v_{sx}) w| \| q_{yy}w \| \\
\lesssim & \sqrt{L} |||q|||_w \eps^{-\frac{1}{4}} \| u^0 \|_B.
\end{align*}

\begin{align}
|(F_{u^0}, q_x w^2)| \lesssim & \sqrt{L} |(v_{sx}u^0_{yy} - u^0 \Delta_\eps v_{sx}) w y|  \| \frac{ q_x}{y} w \| \\ \n
\lesssim & \sqrt{L} |||q|||_w  \begin{cases} \eps^{-\frac{1}{4}} \| u^0 \|_{B} \text{ if } w = 1, \\  \eps^{-\frac{3}{4}} \| u^0 \|_{B} \text{ if } w = \frac{1}{v_e} \langle y \rangle \end{cases}.
\end{align}

\begin{align*}
(F_{u^0}, \eps v_{xxyy} u_s w^2) = & - (\{v_{sx}u^0_{yy} - u^0 \Delta_\eps v_{sx}\}, \eps v_{xyy} u_s w^2 )_{x = 0} \\
& - (\{ v_{sxx}u^0_{yy} - u^0 \Delta_\eps v_{sxx} \}, \eps v_{xyy} u_s w^2) \\
& - (\{ v_{sx}u^0_{yy} - u^0 \Delta_\eps v_{sx} \}, \eps v_{xyy} u_{sx}w^2) \\
\lesssim & \sqrt{\eps}\|v_{sx}u^0_{yy} - u^0 \Delta_\eps v_{sx} w\| \|\sqrt{\eps} u_s v_{xyy}w\|_{x = 0} \\
& + \sqrt{L}  \sqrt{\eps}\|\{v_{sxx}u^0_{yy} - u^0 \Delta_\eps v_{sxx} \}w\| \|\sqrt{\eps} u_s v_{xyy} w\| \\
& + \sqrt{L} \sqrt{\eps} \|\{ v_{sx}u^0_{yy} - u^0 \Delta_\eps v_{sx} \} w\| \| \sqrt{\eps} u_s v_{xyy} w \| \\
\lesssim & \eps^{\frac{1}{4}} \| u^0 \|_B |||q|||_{\sqrt{\eps}w}.
\end{align*}

\begin{align*}
(F_{u^0}, \eps v_{xxyy} u_s w^2) = &-(\eps^2 [v_{sx}u^0_{yy} + u^0 \Delta_\eps v_{sx}] v_{xxx}w^2)_{x = 0} \\
& - ( \eps^2 v_{sxx} u^0_{yy}, v_{xxx} w^2 ) +  ( \eps^2 \Delta_\eps v_{sxx} u^0, v_{xxx} w^2 ) \\
\lesssim & \sqrt{\eps} \|u_s \eps^{\frac{3}{2}}v_{xxx} w\|_{x = 0} \|(v_{sx}u^0_{yy} - u^0 \Delta_\eps v_{sx}) \frac{w}{u_s}\| \\
& + \sqrt{\eps} \sqrt{L} \|v_{sxx} u^0_{yy} w - u^0 \Delta_\eps v_{sxx} w \| \| \eps v_{xxx} \sqrt{\eps}w \| \\
\lesssim & \eps^{\frac{1}{4}} \| u^0 \|_B |||q |||_{\sqrt{\eps}w}.
\end{align*}

\end{proof}

\begin{proposition} \label{propn.loop} \normalfont 
\begin{align}
\begin{aligned} \label{olddom}
&\| u^0 \|_{B}^2 \lesssim \eps^{\frac{1}{2}-} \| v \|_{Y_{1}}^2 + \eps^{\frac{1}{2}+} \| v \|_{Y_{w_0}}^2 + \eps^{\frac{1}{2} + \frac{3}{16}-}\| v \|_{X_1}^2 + \| g_{(u)} w_0 \|^2 \\
& \hspace{20 mm} + \eps^{-\frac{1}{2}} \| g_{(u)} \langle y \rangle^{\frac{1}{2}+}\|^2, \\
&\| v \|_{X_1}^2 \lesssim \mathcal{B}_{X_1} + \mathcal{F}_{X_1} + \mathcal{N}_{X_1} \\
&\| v \|_{Y_{1}}^2 \lesssim \mathcal{B}_{Y_w} + \mathcal{F}_{Y_1} + \mathcal{N}_{Y_1} \\
&\| v \|_{Y_{w_0}}^2 \lesssim o_L(1) \| v \|_{X_1}^2 + \mathcal{B}_{Y_{w_0}}  + \mathcal{F}_{Y_{w_0}} + \mathcal{N}_{Y_{w_0}}.
\end{aligned}
\end{align}
\end{proposition}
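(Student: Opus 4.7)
The plan is to establish the four estimates sequentially. The first (bound on $\|u^0\|_B$) is the new ingredient of this section and follows directly from the $\mathcal{L}^{-1}$ theory developed earlier, specifically Theorem \ref{thm.Umain}. The remaining three are energy estimates for $v$ obtained by testing the DNS system (\ref{intro.v.sys}) against the multipliers implicit in the definitions of $\mathcal{N}_{X_1}$, $\mathcal{N}_{Y_1}$, $\mathcal{N}_{Y_{w_0}}$, and the structure is identical to the scheme carried out in \cite{GI1}. The role of the preceding lemma (boundary estimate (\ref{key.z.est}) and the bounds on $\mathcal{B}_{X_1}, \mathcal{B}_{Y_w}$) is to close the loop by controlling the $F_{u^0}$ contributions in terms of $\|u^0\|_B$ and a small fraction of the $v$-norms.

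For the first estimate, I apply Theorem \ref{thm.Umain} to (\ref{sys.u0}) at $\delta = 0$, which gives
\[
\|u^0\|_B \lesssim \|\{F_{(v)} + g_{(u)}\}\{w_0 + \eps^{-\frac{1}{4}}\langle y\rangle^{\frac{1}{2}+}\}\|.
\]
The $g_{(u)}$ contribution produces the last two forcing terms on the right of (\ref{olddom}.1) directly by the triangle inequality. For $F_{(v)} = -2\eps u_s u_{sx} q_x|_{x = 0} - 2\eps v_{xyy}|_{x = 0} - \eps^2 v_{xxx}|_{x = 0} - \eps v_s u_{xx}|_{x = 0}$, each summand carries an explicit positive power of $\eps$; I match these prefactors against the boundary traces already collected in $|q|_{\p,2,w}$ and $|v|_{\p,3,w}$ (see the definitions in (\ref{defn.norms.ult})). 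Specifically, $\eps v_{xyy}|_{x = 0}$ is controlled by $\sqrt{\eps} \cdot \|\sqrt{\eps}\,u_s v_{xyy} w\|_{x = 0}$ (losing one $u_s^{-1}$ via dividing by $u_s$, picked up in the weight) and $\eps^2 v_{xxx}|_{x = 0}$ by $\sqrt{\eps}\cdot \|\eps^{\frac{3}{2}}\sqrt{u_s} v_{xxx} w\|_{x = 0}$; after pairing with the weights $w_0$ (giving the $\|v\|_{Y_{w_0}}$ piece) and $1$ (giving the $\|v\|_{Y_{1}}$ piece) and using $\eps^{-\frac{1}{4}}\langle y\rangle^{\frac{1}{2}+}$ (giving the $\|v\|_{X_{1}}$ piece with its characteristic $\eps^{3/16}$ loss), one gets exactly the $\eps^{\frac{1}{2}-}$, $\eps^{\frac{1}{2}+}$, and $\eps^{\frac{1}{2} + \frac{3}{16}-}$ prefactors asserted. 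The $\eps u_s u_{sx} q_x|_{x = 0}$ term is handled similarly using $\|u_s q_{xy} w\|_{x = 0}$ after one $x$-integration, while $\eps v_s u_{xx}|_{x = 0}$ is rewritten via $u = u^0 - \int_0^x v_y$, allowing $u_{xx}$ to be expressed through $q_{xy}, v_{xyy}$ boundary traces.

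For the three remaining estimates I follow the DNS multiplier scheme of \cite{GI1}. Test (\ref{intro.v.sys}) against the multiplier defining $\mathcal{N}_{X_1}$, namely $q_x + q_{xx} + q_{yy} + \eps^{-\frac{3}{8}}\{\eps^2 v_{xxxx} + \eps^{-\frac{1}{8}}\eps u_s v_{xxyy}\}$. The pairing with $-\p_x R[q]$ yields positivity of $\|\nabla_\eps q_x \cdot u_s\|^2$ and the boundary traces in $|q|_{\p,2,1}$; the pairing with $\Delta_\eps^2 v$ yields positivity of the fourth-derivative terms in $||||v||||_1$ together with the boundary traces in $|v|_{\p,3,1}$; the $J(v)$ piece is lower order and is absorbed via Hardy/Cauchy--Schwarz. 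The right-hand side $\{-F_{u^0} + \eps^{N_0} \mathcal{N} + g_{(q)}\}$ produces by definition exactly $\mathcal{B}_{X_1} + \mathcal{N}_{X_1} + \mathcal{F}_{X_1}$. The same procedure with the $Y_1$ and $Y_{w_0}$ multipliers gives (\ref{olddom}.3) and (\ref{olddom}.4), respectively; the crucial modification for $Y_{w_0}$ is that commuting the weight $w_0$ past the derivatives produces commutator terms involving $w_0'$ which, thanks to the $x$-integration over $(0, L)$ being of length $L$, are bounded by $o_L(1) \|v\|_{X_1}^2$ and can therefore be absorbed.

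The principal obstacle is the bookkeeping in estimate (\ref{olddom}.1): the $\eps^{1/2}$ gain is precisely what allows the scheme (\ref{scheme.1}) to close, and it is generated by exploiting the explicit $\eps$ and $\eps^2$ prefactors in $F_{(v)}$ against the boundary traces available in $|||q|||_w$ and $||||v||||_w$. Matching the three different weights $w \in \{1, w_0, \eps^{-\frac{1}{4}}\langle y\rangle^{\frac{1}{2}+}\}$ with the three different $v$-norms on the RHS---while ensuring the correct small power of $\eps$ emerges in each case---requires applying the boundary estimate (\ref{key.z.est}) term-by-term and performing the trace estimates carefully on each of the four summands of $F_{(v)}$.
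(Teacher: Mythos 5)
Your overall architecture does match the paper's: the first inequality comes from the Section \ref{Section.1} theory (Lemmas \ref{L.estimate}, \ref{Lemma.kappa}, \ref{lemma.Z.scale.1}, i.e. Theorem \ref{thm.Umain}) applied with $F = F_{(v)} + g_{(u)}$, and the last three are the \cite{GI1} energy inequalities for DNS, combined with suitable multipliers ($\eps^{-3/8}$ for $X_1$, $\eps$ for $Y_1$, and an $\eps L^{-\sigma_2}$-weighted combination together with $|w_y|\lesssim \sqrt{\eps}|w|+1$ for $Y_{w_0}$). The gap is in your execution of the first estimate, which is the nontrivial part here. You propose to bound the traces in $F_{(v)}$ by the $Y_w$ boundary norms by ``dividing by $u_s$, picked up in the weight.'' This fails: $u_s$ vanishes at $y=0$ (no-slip), so $u_s^{-1}$ is unbounded near the boundary and none of the admissible weights $1$, $w_0$, $\langle y\rangle^{\frac12+}$ carries a compensating factor. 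The paper instead splits each trace term into the far field $y\gtrsim 1$, where $u_s\gtrsim 1$ and one indeed obtains $\sqrt{\eps}\,\|v\|_{Y_w}$, and the near field, where one uses an interpolation with interior norms, e.g. $\|\eps^2 v_{xxx}\chi\|_{x=0}\le \eps^{\frac12}\|\eps v_{xxx}\|^{\frac12}\|\eps^2 v_{xxxx}\|^{\frac12}\lesssim \eps^{\frac12+\frac{3}{32}}\|v\|_{X_1}$ (similarly for $\eps v_{xyy}$, and Hardy-type bounds for the $q_x$ terms). This near-field interpolation is the actual source of the $\eps^{\frac12+\frac{3}{16}-}\|v\|_{X_1}^2$ term; your attribution of the $\eps^{3/16}$ loss to the weight $\eps^{-\frac14}\langle y\rangle^{\frac12+}$ is incorrect, and without the near/far splitting your estimate does not go through.

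A second missing step: Theorem \ref{thm.Umain} delivers the weight $w_0+\eps^{-\frac14}\langle y\rangle^{\frac12+}$, and you never explain how $\eps^{-\frac14}\|F_{(v)}\langle y\rangle^{\frac12+}\|$ is converted into the two weights $1$ and $w_0$ appearing in the conclusion. The paper does this by the interpolation (\ref{det.a}), $\eps^{-\frac14}\|F_{(v)}\langle y\rangle^{\frac12+}\|\lesssim \eps^{-\frac{(1-)}{4}}\|F_{(v)}w_0\|+\eps^{-\frac{(1+)}{4}}\|F_{(v)}\|$, which, combined with $\|F_{(v)}w\|\lesssim\sqrt{\eps}\|v\|_{Y_w}+\eps^{\frac12+\frac{3}{32}}\|v\|_{X_1}$ for $w\in\{1,w_0\}$, is exactly what produces the asymmetric prefactors $\eps^{\frac12+}$ on $\|v\|_{Y_{w_0}}^2$ versus $\eps^{\frac12-}$ on $\|v\|_{Y_1}^2$; asserting that the prefactors come out ``exactly'' by pairing weights with norms skips this. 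Finally, a smaller confusion: the boundary estimate (\ref{key.z.est}) concerns $F_{u^0}=v_{sx}u^0_{yy}-u^0\Delta_\eps v_{sx}$ and plays no role in estimating $F_{(v)}$ in the first inequality; it enters only through the $\mathcal{B}$-terms, which in the proposition as stated are simply retained on the right-hand side.
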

\begin{proof} First bring together estimates (\ref{estimate.J}), (\ref{kappa.est}), and (\ref{multiscale}) to obtain 
\begin{align*}
\| u^0 \|_B^2 \lesssim \|F \langle y \rangle \|^2 + \eps^{-\frac{1}{2}} \| F \langle y \rangle^{\frac{1}{2}+}\|^2. 
\end{align*} 

Next, consider $F = F_{(v)} + g_{(u)}$ as specified in (\ref{sys.u0}). The $g_{(u)}$ quantities appear in the desired estimate, so we do not treat them further. We interpolate the $F_{(v)}$ terms in the following manner: 
\begin{align} \n
\eps^{-\frac{1}{4}}\| F_{(v)} \langle y \rangle^{\frac{1}{2}+} \| \le& \eps^{-\frac{1}{4}} \| F_{(v)} \langle y \rangle \|^{\frac{1}{2}+} \| F_{(v)} \|^{\frac{1}{2}-} \\ \n
\lesssim & \eps^{-\frac{(1-)}{4}}  \| F_{(v)} \langle y \rangle \| + \eps^{-\frac{(1+)}{4}} \| F_{(v)} \| \\ \label{det.a}
\lesssim &  \eps^{-\frac{(1-)}{4}} \| F_{(v)} w_0\| +\eps^{-\frac{(1+)}{4}} \| F_{(v)} \|.
\end{align}

We now estimate each term in $F_{(v)}$ which we write here from (\ref{sys.u0}) for convenience: 
\begin{align} \label{Fv}
F_{(v)} := -2\eps u_s u_{sx}q_x|_{x = 0} - 2\eps v_{xyy}|_{x =0} - \eps^2 v_{xxx}|_{x = 0} + \eps v_s v_{xy}|_{x = 0}.
\end{align}

Starting with the higher order terms,
\begin{align*}
\|\eps^2 v_{xxx} w \|_{x = 0} \le& \| \eps^2 v_{xxx} w\{1- \chi\} \|_{x = 0} +  \| \eps^2 v_{xxx} w  \chi  \|_{x = 0} \\
\le & \sqrt{\eps} \| \eps^{\frac{3}{2}}u_s v_{xxx} w \|_{x = 0} + \eps^{\frac{1}{2}} \| \eps v_{xxx} \|^{\frac{1}{2}} \| \eps^2 v_{xxxx} \|^{\frac{1}{2}} \\
\lesssim & \sqrt{\eps} \| v \|_{Y_w} + \eps^{\frac{1}{2}+\frac{3}{32}} \| v \|_{X_1}.
\end{align*}

\noindent The identical argument is performed for (\ref{Fv}.2). 

For the fourth term, we expand $v_{xy}|_{x = 0} = u_s q_{xy}|_{x = 0} + u_{sy}q_x|_{x = 0}$, perform a Hardy type inequality for the $q_x$ term to obtain
\begin{align*}
\| \eps v_s v_{xy} w \|_{x = 0} \le& \| \eps v_s u_s q_{xy} w \|_{x = 0} + \| \eps v_s u_{sy} q_x w \|_{x = 0} \\
\le & \sqrt{\eps} \| v \|_{Y_w} + \eps \| v_s u_{sy} q_x \chi \|_{x = 0} \\
\le & \sqrt{\eps} \| v \|_{Y_w} + \eps^{\frac{3}{4}} \| \eps^{\frac{1}{4}}\frac{q_x}{\langle y \rangle}\|_{x = 0} \\
\le & \sqrt{\eps} \| v \|_{Y_w} + \eps^{\frac{3}{4}} \| v \|_{X_1}.
\end{align*}

To estimate the first term from (\ref{Fv}), we split into Euler and Prandtl: 
\begin{align*}
\| \eps u_s u_{sx}q_x w \|_{x = 0} \le& \| \eps u_s u^P_{sx} q_x w \|_{x = 0} + \eps^{\frac{3}{2}} \| u_s u^E_{sx} q_x w \|_{x = 0} \\
\le & \| u^P_{sx} w \langle y \rangle \|_\infty \eps \| \frac{q_x}{\langle y \rangle} \|_{x = 0} + \sqrt{\eps} [\|\sqrt{\eps}q_x \sqrt{\eps}w \| + \| \sqrt{\eps} q_{xx} \sqrt{\eps}w \|] \\
\lesssim & \eps^{\frac{3}{4}} \| v \|_{X_1} + \sqrt{\eps} \| v \|_{Y_w}.
\end{align*}

We have thus established: 
\begin{align*}
\| F_{(v)} w \| \lesssim \sqrt{\eps} \| v \|_{Y_w} + \eps^{\frac{1}{2}+\frac{3}{32}} \| v \|_{X_1}. 
\end{align*}

Inserting this inequality into (\ref{det.a}) with $w = 1$ and $w = w_0$ respectively gives the desired bound for $\| u^0 \|_B^2$. This concludes the first estimate of (\ref{olddom}).

We now move to the $X_1$ estimate in (\ref{olddom}). The following two bounds hold: 
\begin{align*}
&||||v||||_1^2 \lesssim \eps^{\frac{3}{8}}|||q|||^2  + (F_{u^0} + g_{(q)} +\p_x N, \eps^2 v_{xxxx} + \eps^{-\frac{1}{8}} \eps u_s v_{xxyy} )\\
&|||q|||_1^2 \lesssim o_L(1) ||||v||||_1^2 + (F_{u^0} + g_{(q)} + \p_x N, q_{xx} + q_{yy} + q_x). 
\end{align*}

\noindent We multiply the top equation by $\eps^{-\frac{3}{8}}$ and add it to the bottom equation, which establishes the desired estimate for $X_1$. To establish the $Y_1$ estimate, we multiply the bottom equation by $\eps$ and add it to the top equation. 

We next write the inequalities for general $w$: 
\begin{align*}
&||||v||||_w^2 \lesssim \eps^{\frac{3}{8}} ||| q |||_1^2 + |||q|||_{\sqrt{\eps}w}^2 + |||q|||_{\sqrt{\eps}w} |||q|||_{w_y} \\
& \hspace{15 mm}+ (F_{u^0} + g_{(q)} + \p_x N, \{ \eps^2 v_{xxxx} + \eps u_s v_{xxyy} \} w^2 + \{ \eps^2 v_{xxxx} + \eps^{-\frac{1}{8}} \eps u_s v_{xxyy} \}), \\
&|||q|||_w^2 \lesssim o_L(1) ||||v||||_w^2 + o_L(1) \| q_{xx} \|_{w_y}^2 + L \eps^{-\frac{3}{2}} \| u^0 \|_B^2 + \eps^{-1} \| g_{(q)}w \|^2 \\
& \hspace{15 mm} + (F_{u^0} + g_{(q)} + \p_x N, \{ q_{xx} + q_x + q_{yy} \} w^2).
\end{align*}

We estimate the $Y_{w_0}$ norm in the following way. Turn first to the third order equation from above. First, note that we may use the inequality $|w_y| \lesssim \sqrt{\eps}|w| + 1$ to estimate 
\begin{align*}
\| q_{xx} \|_{w_y}^2 \lesssim \| q_{xx} \|_{\sqrt{\eps}w}^2 + ||q_{xx}||^2.
\end{align*}

According to our definition of $o_L(1)$, there exists some $\sigma_1$ such that the first two majorizing terms in the $|||q|||_w$ estimate can be written as: 
\begin{align*}
L^{\sigma_1} \{ ||||q||||_{w}^2 + \| q_{xx} \|_{w_y}^2 \} \le L^{\sigma_1} ||||q||||_{w}^2 + L^{\sigma_1} \| q_{xx} \|_{\sqrt{\eps}w}^2 + L^{\sigma_1} ||q_{xx}||^2. 
\end{align*}

We now turn to the $||||q||||_w$ estimate above, and split the product: 
\begin{align*}
|||q|||_{\sqrt{\eps}w} |||q|||_{w_y} \lesssim& L^{-\sigma_2} |||q|||_{\sqrt{\eps}w}^2 + L^{\sigma_2} |||q|||_{w_y}^2 \\
\lesssim &L^{-\sigma_2} |||q|||_{\sqrt{\eps}w}^2 + L^{\sigma_2} |||q|||_{\sqrt{\eps} w}^2 + L^{\sigma_2} |||q|||_{1}^2.
\end{align*}

We select $0 < \sigma_2 << \sigma_1$. Then we multiply the $|||q|||_w^2$ equation by $\eps L^{-\sigma_2}$ and add it to the $||||q||||_w^2$ estimate. This then concludes the proof. 

\end{proof}

The above scheme closes to yield: 
\begin{align}
\begin{aligned} \label{hindi.1}
\| \bold{u} \|_{\mathcal{X}}^2 \lesssim & \eps^{\frac{1}{2}}\mathcal{N}_{X_1}  + \eps^{\frac{1}{2}-} \mathcal{N}_{Y_{w_0}} + \eps^{\frac{1}{2}+} \mathcal{N}_{Y_1} + \mathcal{F}, 
\end{aligned}
\end{align}

\noindent where 

\begin{align} \label{defn.forcing.f}
\mathcal{F} := &  \eps^{\frac{1}{2}}\mathcal{F}_{X_1}  + \eps^{\frac{1}{2}-} \mathcal{F}_{Y_{w_0}} + \eps^{\frac{1}{2}+} \mathcal{F}_{Y_1} \\ \n
& + \| g_{(u)} w_0 \|^2 + \eps^{-\frac{1}{2}} \| g_{(u)} \langle y \rangle^{\frac{1}{2}+}\|^2.
\end{align}

\begin{lemma} \label{lemma.nonlinear}
\begin{align} \label{showin}
&|\mathcal{N}_{X_1}| \lesssim \eps^{p-(1+)} \| \bar{u}^0 \|_B \| \bar{v} \|_{X_1} \| v \|_{X_1} + \eps^{p-\frac{3}{4}} \| \bar{v} \|_{X_1} \| \bar{v} \|_{X_1} \| v \|_{X_1}, \\ \label{showin2}
&|\mathcal{N}_{Y_w}| \lesssim \eps^{p-(1+)} \| \bar{u}^0 \|_B \| \bar{v} \|_{Y_w} \| v \|_{Y_w} + \eps^{p-\frac{3}{4}} \| \bar{v} \|_{X_1} \|\bar{v} \|_{Y_w} \| v \|_{Y_w} 
\end{align}
\end{lemma}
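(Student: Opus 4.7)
\medskip

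\textbf{Proof proposal.} The plan is to unfold $\p_x N$ by product rule into terms that are bilinear in the iterated unknowns $(\bar u^0,\bar v)$, pair each with the various test functions appearing in $\mathcal{N}_{X_1}$ and $\mathcal{N}_{Y_w}$, and close by Cauchy--Schwarz together with $L^\infty$ embeddings of the low-order factors. Recall from the definition preceding (\ref{eqn.vort.intro}) that $N$ is the quadratic nonlinearity $\eps^{N_0}\{\bar u\,\Delta_\eps \bar v-\bar v\,\Delta_\eps \bar v\}$, with the convention $\bar u=\bar u^0-\int_0^x\bar v_y$ so that $\bar u^0$ and $\bar v$ are decoupled. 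The overall factor $\eps^{N_0}=\eps^p$ accounts for the positive powers of $\eps$ advertised in (\ref{showin})--(\ref{showin2}); the remaining loss arises from cheap pointwise estimates on the factors of low derivative order.

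First I would split $\p_x N$ schematically into $(\p_x \bar u)\,\Delta_\eps \bar v+\bar u\,\p_x\Delta_\eps \bar v$ and the analogous $\bar v$ pieces, use $\p_x\bar u=-\bar v_y$ to reduce everything to combinations of derivatives of $\bar v$ and of $\bar u^0$, and then distribute the pairing against each individual test function: $q_x$, $q_{xx}$, $q_{yy}$, $\eps^{-\frac{3}{8}}\eps^2 v_{xxxx}$ and $\eps^{-\frac{3}{8}-\frac{1}{8}}\eps u_s v_{xxyy}$ for $\mathcal{N}_{X_1}$, together with the weighted analogues for $\mathcal{N}_{Y_w}$. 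On each term I intend to place the highest-derivative factor in $L^2$, absorb it into the corresponding norm on the right-hand side of (\ref{scheme.1}), and estimate the remaining factors in $L^\infty$ using the $L^\infty$ embeddings announced beneath Theorem~\ref{thm.main}, namely
\begin{align*}
\|\bar v\|_\infty\lesssim \eps^{-\frac{1}{2}}\|\bar v\|_{X_1},\qquad \|\bar u\|_\infty\lesssim \eps^{-\frac{1+}{2}}\|\bar{\mathbf u}\|_{\mathcal X},
\end{align*}
and Lemma~\ref{lemma.unif.u} for $\bar u^0$. Powers of $\eps$ arising from the scaled operator $\Delta_\eps=\p_{yy}+\eps\p_{xx}$ and from the $\sqrt{\eps}$-weights built into the $X_w,Y_w$ norms are then tracked through each pairing, and the worst contributions precisely reproduce the losses $\eps^{p-(1+)}$ (those carrying a factor of $\bar u^0$ controlled by $\|\bar u^0\|_B$) and $\eps^{p-\frac{3}{4}}$ (the purely $\bar v$-$\bar v$ contributions).

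The separation into a $\bar u^0$--contribution and a purely convective $\bar v$--contribution is natural: whenever the factor $\bar u$ is hit by an $x$-derivative, the $\int_0^x\bar v_y$ piece produces $\bar v_y$, which is controlled directly in $|||q|||_w$ and $||||v||||_w$; whenever $\bar u$ itself appears (without $\p_x$), the trace $\bar u^0$ survives on $\{x=0\}$ together with a second antiderivative-in-$x$ term that is again in $X_1$. In both cases one reduces to a product of three factors, one of which is the test function (placed in $L^2$), one is a high derivative of $\bar v$ (also in $L^2$ via $X_1$ or $Y_w$), and one is a low-order factor taken in $L^\infty$. The bookkeeping is essentially identical to that used in the analogous nonlinear estimate in \cite{GI1}, so I would structure the argument in the same way, writing out one representative term from each of the eight or so types generated by the product rule and leaving the remaining ones to analogy.

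The main obstacle I anticipate is not analytic but combinatorial: the derivative $\p_x N$ produces on the order of a dozen terms, each of which has to be paired with five test functions in both the unweighted and weighted setting, and in each pairing one must verify that the highest-derivative factor genuinely falls within one of the norms $||||v||||_w$, $|||q|||_w$ and that the remaining factors admit $L^\infty$ bounds with enough room to spare to yield a strictly positive power of $\eps$ after multiplication by $\eps^{N_0}$. The sharpest losses come from the $\eps u_s v_{xxyy}$ and $\eps^2 v_{xxxx}$ pairings in $\mathcal{N}_{X_1}$, where the extra $\eps^{-\frac{3}{8}}$ and $\eps^{-\frac{1}{8}}$ pre-factors built into the $X_1$ norm eat into the gain; this is exactly where the exponent $(1+)$ in (\ref{showin}) is saturated, and ensuring $p>1+$ (which is guaranteed by $p=\tfrac32+$) is what makes the fixed-point iteration in $\mathcal X$ contract.
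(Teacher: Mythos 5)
Your plan coincides with the paper's proof: the paper first applies Cauchy--Schwarz to reduce the pairings to $|\mathcal{N}_{X_1}|\lesssim \eps^{-\frac12}\|\p_x N\|\|v\|_{X_1}$, $|\mathcal{N}_{Y_w}|\lesssim\|\p_x N\, w\|\|v\|_{Y_w}$, and then proves $\eps^{-p}\|\p_x N\cdot w\|\lesssim\{\eps^{-\frac{1+}{2}}\|\bar u^0\|_B+\eps^{-\frac14}\|\bar v\|_{X_1}\}|||\bar q|||_w$ by going through the terms of (\ref{defn.nonlinear}) exactly as you describe — highest derivative in weighted $L^2$, low-order factor in $L^\infty$ via the interpolation/Hardy embeddings and Lemma \ref{lemma.unif.u}, with no integration by parts needed. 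The only bookkeeping nuance is that the $(1+)$ loss is produced by $|\bar u^0|_\infty\lesssim\eps^{-\frac{1+}{2}}\|\bar u^0\|_B$ together with the $\eps^{-\frac12}$ from the test-function prefactors, not by the prefactors alone, but since you invoke precisely these bounds the argument is the same.
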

\begin{proof}  We begin with the immediate estimates: 
\begin{align*}
|\mathcal{N}_{X_1}| \lesssim  \eps^{-\frac{1}{2}} \| \p_x N \| \| v \|_{X_1} \hspace{3 mm} |\mathcal{N}_{Y_{w}}| \lesssim  \| \p_x N w \| \| v \|_{Y_w}.
\end{align*}

We now establish the following bound:
\begin{align*}
\eps^{-p}\| \p_x N \cdot w \| \lesssim \{\eps^{-\frac{1+}{2}} \|  \bar{u}^0 \|_B + \eps^{-\frac{1}{4}} \|  \bar{v} \|_{X_1} \}||| \bar{q} |||_w. 
\end{align*}

To establish this, we go term by term: 
\begin{align*}
&\| \bar{v}_y \Delta_\eps \bar{v} w \| \le \| \bar{v}_y \|_\infty \| \Delta_\eps \bar{v} w \| \\
&\|I_x[\bar{v}_y] \Delta_\eps \bar{v}_x w \| \le \| \bar{v}_y \|_\infty \| \Delta_\eps \bar{v}_x w \| \\
&\| \bar{v}_x I_x[\bar{v}_{yyy}] w \| \le \eps^{-\frac{1}{4}} \| \eps^{\frac{1}{4}} \bar{v}_x \|_{L^2_x L^\infty_y} \| \bar{v}_{yyy} w \| \\
&\| \bar{v} \bar{v}_{yyy} w \| \le \eps^{-\frac{1}{4}} \| \eps^{\frac{1}{4}} \bar{v} \|_{L^2_x L^\infty_y} \| \bar{v}_{yyy} w \| \\
&\| \eps \bar{v}_x \bar{v}_{xy}w \| \le \sqrt{\eps} \| \sqrt{\eps} \bar{v}_x \|_\infty \| \bar{v}_{xy} w \| \\
&\| \bar{v} \Delta_\eps \bar{v}_y w \| \le \eps^{-\frac{1}{4}} \| \eps^{\frac{1}{4}} \bar{v} \|_\infty \| \Delta_\eps \bar{v}_y w \| \\
&\| \bar{u}^0 \Delta_\eps \bar{v}_x w \| \le | \bar{u}^0 |_\infty \| \Delta_\eps \bar{v}_x w \| \lesssim \eps^{-\frac{(1+)}{2}} \| \bar{u}^0 \|_B |||\bar{q}|||_w, \\
&\| \bar{u}^0_{yy} \bar{v}_x w \| \le \| \bar{u}^0_{yy} \langle y \rangle \|_{L^\infty_x L^2_y} \| \bar{v}_x \langle y \rangle^{-1} w \|_{L^2_x L^\infty_y} \lesssim \| \bar{u}^0 \|_{B} |||\bar{q}|||_w.
\end{align*}

Above, we have used the following interpolation: 
\begin{align*}
\| \bar{v}_x \langle y \rangle^{-\frac{1}{2}} w \|_{L^2_x L^\infty_y} \le & \| \bar{v}_x \langle y \rangle^{-1} w \|^{\frac{1}{2}} \| \bar{v}_{xy} w\|^{\frac{1}{2}},
\end{align*}

\noindent  and Hardy's inequality. The result follows upon remarking the following basic fact. For any function $g(x,y)$ such that $g_{x = 0 \text{ OR } x= L} = 0$ and $g|_{y = \infty} = 0$: $|g|^2 \le \| g_x \| \| g_y \| + \| g \| \| g_{xy} \|$. This immediately gives: $\| \eps^{\frac{1}{4}} v \|_\infty + \| v \langle y \rangle^{-1/2} \|_\infty + \|\nabla_\eps v \|_\infty \lesssim |||q|||_1$. A basic interpolation also gives $\| \eps^{\frac{1}{4}} \bar{v}_x \|_{L^\infty_y} \le \| \sqrt{\eps} v_x \|_{L^2_y}^{\frac{1}{2}} \|v_{xy} \|_{L^2_y}^{\frac{1}{2}}$.

\end{proof}

We now select $N_0 = 1+$ and $n = 4$ to obtain  
\begin{corollary} \normalfont  Let $u^0$ solve (\ref{sys.u0}) and $v$ solve (\ref{eqn.dif.1.app}). Then the following estimate holds: 
\begin{align*}
\| \bold{u} \|_{\mathcal{X}}^2 \lesssim \| \bold{\bar{u}} \|_{\mathcal{X}}^4 + o(1) \| \bold{\bar{u}} \|_{\mathcal{X}}^2 + o(1).
\end{align*}
\end{corollary}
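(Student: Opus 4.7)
The plan is to take the linked scheme of estimates (\ref{hindi.1}) as the starting point and substitute the nonlinear bounds from Lemma \ref{lemma.nonlinear}, then convert every factor into the $\mathcal{X}$-norm using the definition in (\ref{defn.norms.ult}) and split the resulting trilinear terms by Young's inequality.

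First I would begin from
\[
\|\bold{u}\|_{\mathcal{X}}^2 \;\lesssim\; \eps^{\frac{1}{2}}\mathcal{N}_{X_1} + \eps^{\frac{1}{2}-}\mathcal{N}_{Y_{w_0}} + \eps^{\frac{1}{2}+}\mathcal{N}_{Y_1} + \mathcal{F},
\]
and use Lemma \ref{lemma.nonlinear} with the choice $N_0 = 1+$, $n = 4$ (so $p = \tfrac{3}{2}+$) to get trilinear control of each $\mathcal{N}$ in terms of $\|\bar u^0\|_B, \|\bar v\|_{X_1}, \|\bar v\|_{Y_w}$ paired with the corresponding norms of $v$. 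Next, applying the embeddings
\[
\|\bar u^0\|_B \le \|\bar{\bold{u}}\|_{\mathcal{X}}, \quad \|\bar v\|_{X_1} \lesssim \eps^{-\frac{1}{4}}\|\bar{\bold{u}}\|_{\mathcal{X}},\quad \|\bar v\|_{Y_1} \lesssim \eps^{-(\frac{1}{4}+)}\|\bar{\bold{u}}\|_{\mathcal{X}},\quad \|\bar v\|_{Y_{w_0}} \lesssim \eps^{-(\frac{1}{4}-)}\|\bar{\bold{u}}\|_{\mathcal{X}},
\]
(and the analogous ones for the $v$ factor), each prefactor combines cleanly with the $\eps$-weights appearing in the scheme and in Lemma \ref{lemma.nonlinear}. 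Tracking powers carefully, every term collapses to the form $\eps^{\sigma}\|\bar{\bold{u}}\|_{\mathcal{X}}^2\|\bold{u}\|_{\mathcal{X}}$ with some $\sigma > 0$.

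For each such term I would then apply Young's inequality in the form
\[
\eps^{\sigma}\|\bar{\bold{u}}\|_{\mathcal{X}}^2\|\bold{u}\|_{\mathcal{X}} \;\le\; \tfrac{1}{2}\|\bar{\bold{u}}\|_{\mathcal{X}}^4 + \tfrac{1}{2}\eps^{2\sigma}\|\bold{u}\|_{\mathcal{X}}^2,
\]
so that the $\|\bold{u}\|_{\mathcal{X}}^2$ contribution carries a genuinely small prefactor and can be absorbed into the left-hand side; what survives is a clean $\|\bar{\bold{u}}\|_{\mathcal{X}}^4$. Any residual bilinear cross terms (coming from the lower-order pieces of $\p_x N$ after Cauchy--Schwarz) are of the form $\eps^\sigma\|\bar{\bold{u}}\|_{\mathcal{X}}\|\bold{u}\|_{\mathcal{X}}$ and, again by Young's inequality, yield an $o(1)\|\bar{\bold{u}}\|_{\mathcal{X}}^2$ contribution together with an absorbable $o(1)\|\bold{u}\|_{\mathcal{X}}^2$ term --- this accounts for the middle term in the stated estimate.

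Finally, the forcing aggregate $\mathcal{F}$ defined in (\ref{defn.forcing.f}) is independent of $\bar{\bold{u}}$ and is bounded by $o(1)$ using the assumption (\ref{assume.bq.intro}) on boundary data together with the approximation bounds from Theorem \ref{thm.construct}. Putting everything together yields
\[
\|\bold{u}\|_{\mathcal{X}}^2 \;\lesssim\; \|\bar{\bold{u}}\|_{\mathcal{X}}^4 + o(1)\|\bar{\bold{u}}\|_{\mathcal{X}}^2 + o(1),
\]
after absorption. The main obstacle is the accounting of $\eps$ powers: each $\mathcal{N}$ carries a sharp weight in the scheme, the translation from $\|\bar v\|_{X_1}, \|\bar v\|_{Y_w}$ into $\|\bar{\bold{u}}\|_{\mathcal{X}}$ costs a negative power of $\eps$, and one must verify that after all these exchanges the leftover $\eps^{2\sigma}$ is still strictly positive so that the $\|\bold{u}\|_{\mathcal{X}}^2$ term is absorbable --- this is precisely the reason the expansion was set up with $p = \tfrac{3}{2}+$ and $n=4$, and it is the only nontrivial computation in the argument.
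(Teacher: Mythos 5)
Your route is the same as the paper's: the corollary is obtained exactly by feeding the bounds of Lemma \ref{lemma.nonlinear} (with $N_0=1+$, $n=4$) into the closed scheme (\ref{hindi.1}), converting $\|\bar u^0\|_B$, $\|\bar v\|_{X_1}$, $\|\bar v\|_{Y_1}$, $\|\bar v\|_{Y_{w_0}}$ into $\|\bar{\bold u}\|_{\mathcal X}$ at the cost of the negative $\eps$-powers in (\ref{defn.norms.ult}), checking that the net exponent on each trilinear term stays positive, splitting by Young, and controlling the forcing aggregate (\ref{defn.forcing.f}); that $\eps$-bookkeeping is indeed the only substantive computation, and your accounting of it is consistent with the paper's.

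One step is misstated, though it does not break the final inequality. The aggregate $\mathcal F$ is \emph{not} independent of the iterate: by construction $g_{(u)}, g_{(q)}$ contain, besides $\tilde g$ and $b$, the homogenization terms $H_{(u)}, H_{(q)}$ of (\ref{defn.H}), which are products of the boundary corrector $\tilde v$ with the unknowns; by (\ref{bqbu}) and (\ref{assume.bq.intro}) these contribute $o(1)\,\|\cdot\|_{\mathcal X}^2$, and this --- together with the $o(1)$ bound for the $\tilde g$, $b$ pieces from (\ref{thm.force.maz}) and (\ref{bqbu}) --- is the actual source of the middle term $o(1)\|\bar{\bold u}\|_{\mathcal X}^2$ in the corollary. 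Your attribution of that term to ``residual bilinear cross terms of $\p_x N$'' is not right: by (\ref{defn.nonlinear}) the nonlinearity is purely quadratic in $(\bar u^0,\bar v)$, so after pairing every nonlinear contribution is trilinear and is absorbed into $\|\bar{\bold u}\|_{\mathcal X}^4 + o(1)\|\bold u\|_{\mathcal X}^2$ exactly as you describe; there are no bilinear leftovers there. Correcting the treatment of $\mathcal F$ in this way yields the stated estimate with the same absorption argument.
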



\newpage

\appendix

\part*{Appendix}

\section{Derivation of Equations} \label{appendix.derive}

We will assume the expansions: 
\begin{align}
&U^\eps = \tilde{u}^n_s + \eps^{N_0} u, \hspace{3 mm} V^\eps = \tilde{v}^n_s + \eps^{N_0} v, \hspace{3 mm} P^\eps = \tilde{P}^n_s + \eps^{N_0} P.
\end{align}

\noindent We will denote the partial expansions: 
\begin{align}
&u_s^i = \sum_{j = 0}^i \sqrt{\eps}^j u^j_e + \sum_{j = 0}^{i-1} \sqrt{\eps}^j u^j_p, \hspace{5 mm} \tilde{u}_s^i = u_s^i + \sqrt{\eps}^i u^i_p, \\
&v_s^i = \sum_{j = 1}^i \sqrt{\eps}^{j-1} v^j_e + \sum_{j = 0}^{i-1} \sqrt{\eps}^j v^j_p, \hspace{5 mm} \tilde{v}_s^i = v_s^i + \sqrt{\eps}^i v^i_p, \\
&P^i_s = \sum_{j = 0}^i \sqrt{\eps}^j P^j_e, \hspace{5 mm}  \tilde{P}_s^i = P_s^i + \sqrt{\eps}^i \Big\{ P^i_p + \sqrt{\eps} P^{i,a}_p \Big\}.
\end{align}

\noindent  We will also define $u^{E,i}_s = \sum_{j = 0}^i \sqrt{\eps}^j u^j_e$ to be the ``Euler" components of the partial sum. Similar notation will be used for $u^{P,i}_s, v^{E,i}_s, v^{P,i}_s$. The following will also be convenient: 
\begin{align}
\begin{aligned} \label{profile.splitting}
&u_s^E := \sum_{i =0}^n \sqrt{\eps}^i u^i_e, \hspace{3 mm} v_s^E := \sum_{i = 1}^n \sqrt{\eps}^{i-1} v^i_e, \\
&u_s^P := \sum_{i = 0}^n \sqrt{\eps}^i u^i_p, \hspace{3 mm} v_s^P := \sum_{i = 0}^n \sqrt{\eps}^i v^i_p, \\
&u_s = u_s^P + u_s^E, \hspace{3 mm} v_s = v_s^P + v_s^E. 
\end{aligned}
\end{align} 

\noindent  The $P^{i,a}_p$ terms are ``auxiliary Pressures" in the same sense as those introduced in \cite{GN} and \cite{Iyer} and are for convenience. We will also introduce the notation: 
\begin{align} \label{bar.defs}
\bar{u}^i_p := u^i_p - u^i_p|_{y = 0}, \hspace{5 mm} \bar{v}^i_p := v^i_p - v^i_p(x,0), \hspace{5 mm} \bar{v}^i_e = v^i_e - v^i_e|_{Y = 0}.
\end{align}

\subsection{$i = 0$}

We first record the properties of the leading order $(i = 0)$ layers. For the outer Euler flow, we will take a shear flow, $[u^0_e(Y), 0, 0]$. The derivatives of $u^0_e$ decay rapidly in $Y$ and that is bounded below, $|u^0_e| \gtrsim 1$. 

For the leading order Prandtl boundary layer, the equations are: 
\begin{align}
\left.
\begin{aligned} \label{Pr.leading}
&\bar{u}^0_p u^0_{px} + \bar{v}^0_p u^0_{py} - u^0_{pyy} + P^0_{px} = 0, \\
&u^0_{px} + v^0_{py} = 0, \hspace{3 mm} P^0_{py} = 0, \hspace{3 mm} u^0_p|_{x = 0} = U^0_P, \hspace{3 mm} u^0_p|_{y = 0} = - u^0_e|_{Y = 0}.
\end{aligned}
\right\}
\end{align}

It is convenient to state results in terms of the quantity $\bar{u}^0_p$, whose initial data is simply $\bar{U}^0_P := u^0_e(0) + U^0_P$. Our starting point is the following result of Oleinik in \cite{Oleinik}, P. 21, Theorem 2.1.1:
\begin{theorem}[Oleinik]   \label{thm.Oleinik} Assume boundary data is prescribed satisfying $U^0_P \in C^\infty$ and exponentially decaying $|\p_y^j \{\bar{U}^0_P - u^0_e(0)\}|$ for $j \ge 0$ satisfying: 
\begin{align} 
\begin{aligned} \label{OL.1}
& \bar{U}^0_P > 0 \text{ for } y > 0, \hspace{3 mm} \p_y \bar{U}^0_P(0) > 0, \hspace{3 mm} \p_y^2 \bar{U}^0_P \sim y^2 \text{ near } y = 0
\end{aligned}
\end{align}

\noindent  Then for some $L > 0$, there exists a solution, $[\bar{u}^0_p, \bar{v}^0_p]$ to (\ref{Pr.leading}) satisfying, for some $y_0, m_0 > 0$, 
\begin{align} \label{coe.2}
&\sup_{x \in (0,L)} \sup_{y \in (0, y_0)} |\bar{u}^0_p, \bar{v}^0_p, \p_y \bar{u}^0_p, \p_{yy}\bar{u}^0_p, \p_x \bar{u}^0_p| \lesssim 1, \\ \label{coe.1}
&\sup_{x \in (0,L)} \sup_{y \in (0, y_0)} \p_y \bar{u}^0_p > m_0 > 0. 
\end{align}
\end{theorem}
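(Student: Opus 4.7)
\textbf{Proof proposal for Theorem \ref{thm.Oleinik}.}
The strategy I would follow is the classical Crocco transformation approach that Oleinik introduced in her original monograph, which reduces the degenerate parabolic Prandtl system to a scalar quasilinear problem on a fixed rectangular domain. Since $\bar{u}^0_p$ is monotonically increasing in $y$ under the assumption $\p_y \bar{U}^0_P(0) > 0$ combined with the maximum principle for the Prandtl equation, one can introduce new coordinates $(\xi, \eta) = (x, \bar{u}^0_p(x,y))$ and work with the unknown $w(\xi, \eta) := \p_y \bar{u}^0_p$ viewed as a function of $(\xi, \eta)$. Using $\bar{u}^0_{px} + v^0_{py} = 0$ to eliminate $\bar{v}^0_p$ (via $\bar{v}^0_p = - \int_0^y \bar{u}^0_{px}$), the equation (\ref{Pr.leading}) transforms into
\begin{equation*}
\eta \, w_\xi - w^2 w_{\eta\eta} + P^0_{px}\, w_\eta = 0, \qquad (\xi, \eta) \in (0,L) \times (0, u^0_e|_{Y=0}),
\end{equation*}
with boundary data $w|_{\xi=0} = \p_y \bar{U}^0_P$ (expressed in terms of $\eta$), $w|_{\eta = u^0_e|_{Y=0}} = 0$, and an inflow/outflow condition at $\eta = 0$ deduced from the no-slip constraint.

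Next, I would establish local-in-$\xi$ existence for this quasilinear problem by a vanishing viscosity/regularization scheme: solve
\begin{equation*}
\eta \, w^\nu_\xi - ((w^\nu)^2 + \nu) w^\nu_{\eta\eta} + P^0_{px}\, w^\nu_\eta = 0
\end{equation*}
via standard parabolic theory, and then pass $\nu \downarrow 0$ using uniform a priori estimates. The crucial a priori bounds are (i) a comparison-principle lower bound $w^\nu \ge m_0 > 0$ on some strip $\eta \in (0, \eta_0)$, provided by constructing explicit sub-solutions that exploit $\p_y \bar{U}^0_P(0) > 0$ and that $P^0_{px}$ is small on $(0,L)$; (ii) an $L^\infty$ upper bound via super-solutions; and (iii) H\"older/Schauder estimates on compact subsets interior to the domain, together with suitable weighted estimates near the degenerate boundary $\eta = u^0_e|_{Y=0}$ where $w$ vanishes. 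These yield a sequence converging to a solution $w$, from which $\bar{u}^0_p$ and $\bar{v}^0_p$ are recovered by inverting the Crocco map $y(\xi, \eta) = \int_0^\eta d\eta'/w(\xi, \eta')$ and using the divergence-free condition to reconstruct $\bar{v}^0_p$.

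The asymptotic compatibility for $\eta$ near $u^0_e|_{Y=0}$ (i.e., $y \uparrow \infty$) is handled by matching $w$ to the decay encoded in the assumption that $\bar{U}^0_P - u^0_e(0)$ decays exponentially; this propagates to $\bar{u}^0_p - u^0_e(0)$ for $x > 0$ using the structure of the Prandtl equation. Finally, the pointwise bounds (\ref{coe.2})--(\ref{coe.1}) follow directly: the uniform bound on $\p_y \bar{u}^0_p$ is precisely the $L^\infty$ bound on $w$, the lower bound (\ref{coe.1}) on a strip $y \in (0, y_0)$ is the sub-solution lower bound on $w$ translated back, bounds on $\p_{yy}\bar{u}^0_p$ come from $w w_\eta$ (upon further differentiating the Crocco equation), and the bound on $\p_x \bar{u}^0_p$ comes from the equation itself combined with the previous estimates.

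The main obstacle I expect is twofold: first, controlling the degenerate coefficient $w^2$ near $\eta = u^0_e|_{Y=0}$, which forces the use of weighted estimates adapted to the vanishing of $w$ (compatible with the Gaussian-type tail of the Blasius profile); second, ensuring the lower bound (\ref{coe.1}) persists on the full strip $(0, L)$ without degenerating, which is what ultimately restricts $L$ to be small and depends delicately on $\| P^0_{px}\|_\infty$ being controlled. Once these non-degeneracy estimates are in hand, the nonlinear iteration closes and the inverse Crocco map is well-defined on a strip $y \in (0, y_0)$, producing the stated regular solution.
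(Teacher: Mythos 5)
This theorem is not proved in the paper at all: it is imported verbatim from Oleinik--Samokhin (\cite{Oleinik}, Theorem 2.1.1, p.~21), and the argument you sketch --- Crocco variables $(\xi,\eta)=(x,\bar{u}^0_p)$ with unknown $w=\p_y \bar{u}^0_p$, reduction to a degenerate quasilinear parabolic scalar equation, regularization, and sub-/super-solution barriers yielding (\ref{coe.2})--(\ref{coe.1}) for $L$ small --- is essentially the classical proof in that cited reference, so you are following the same route the paper relies on. If you were to carry it out, three details need care: with the sign convention of (\ref{Pr.leading}) the Crocco equation reads $\eta w_\xi - P^0_{px} w_\eta - w^2 w_{\eta\eta}=0$ (your pressure term has the wrong sign); the condition at $\eta=0$ coming from no-slip together with $\bar{v}^0_p|_{y=0}=0$ is the nonlinear Neumann condition $w\,w_\eta|_{\eta=0}=P^0_{px}$, not an inflow/outflow condition; and the Crocco change of variables needs $\p_y\bar{u}^0_p>0$ in the whole region, which does not follow from $\p_y\bar{U}^0_P(0)>0$ plus a maximum principle alone (it requires monotone data at $x=0$, as holds for the Blasius-type profiles actually used here), whereas the quoted theorem only asserts the lower bound (\ref{coe.1}) on a strip $y\in(0,y_0)$.
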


\noindent  By evaluating the system (\ref{Pr.leading}) and $\partial_y$ of (\ref{Pr.leading}) at $\{y = 0\}$ we conclude: 
\begin{align*}
\bar{u}^0_{pyy}|_{y = 0} = \bar{u}^0_{pyyy}|_{y = 0} = 0. 
\end{align*}

\subsection{$1 \le i \le n-1$}

We now list the equations to be satisfied by the $i$'th layers, starting with the $i$'th Euler layer:
\begin{align} \label{des.eul.1}
\left.
\begin{aligned}
&u^0_e \p_x u^i_e + \p_Y u^0_e v^i_e + \p_x P^i_e =: f^i_{E,1}, \\
&u^0_e \p_x v^i_e + \p_Y P^i_e  =: f^i_{E,2}, \\
&\p_x u^i_e + \p_Y v^i_e = 0, \\
&v^i_e|_{Y = 0} = - v^0_p|_{y = 0}, \hspace{5 mm} v^i_e|_{x = 0, L} = V_{E, \{0, L\}}^i \hspace{5 mm} u^i_e|_{x = 0} = U^i_{E}.
\end{aligned}
\right\}
\end{align}

For the $i$'th Prandtl layer:
\begin{align} \label{des.pr.1}
\left.
\begin{aligned}
&\bar{u} \p_x u^i_p + u^i_p \p_x \bar{u} + \p_y \bar{u} [v^i_p - v^i_p|_{y = 0}] + \bar{v} \p_y u^i_p + \p_x P^i_p - \p_{yy} u^i_p := f^{(i)}, \\  
& \p_x u^i_p + \p_y v^i_p = 0,  \hspace{5 mm} \p_y P^i_p = 0\\  
& u^i_p|_{y = 0} = -u^i_e|_{y = 0}, \hspace{5 mm} [u^i_p, v^i_p]_{y \rightarrow \infty} = 0, \hspace{5 mm} v^i_p|_{x = 0} = \text{prescribed initial data}. 
\end{aligned}
\right\}
\end{align}

The relevant definitions of the above forcing terms are given below. Note that as a matter of convention, summations that end with a negative number are empty sums.
\begin{definition}[Forcing Terms] \label{def.forcing}  
\begin{align*}
&-f^i_{E,1} := u^{i-1}_{ex} \sum_{j = 1}^{i-2} \sqrt{\eps}^{j-1} \{u^j_e + u^j_p(x,\infty) + u^{i-1}_e \sum_{j = 1}^{i-2} \sqrt{\eps}^{j-1} u^j_{ex} \\
& \hspace{15 mm} + \sqrt{\eps}^{i-2}[ \{u^{i-1}_{e} + u^{i-1}_p(x,\infty) \} u^{i-1}_{ex} + v^{i-1}_e u^{i-1}_{eY}] \\
& \hspace{15 mm} + u^{i-1}_{eY} \sum_{j = 1}^{i-2} \sqrt{\eps}^{j-1} v^j_e + v^{i-1}_e \sum_{j = 1}^{i-2} \sqrt{\eps}^{j-1} u^j_{eY} - \sqrt{\eps} \Delta u^{i-1}_e - g^{u,i}_{ext, e} \\
&-f^i_{E,2} := v^{i-1}_{eY} \sum_{j = 1}^{i-2} \sqrt{\eps}^{j-1} v^j_e + v_e^{i-1} \sum_{j = 1}^{i-2} \sqrt{\eps}^{j-1} v^j_{eY} + \sqrt{\eps}^{i-2}[v^{i-1}_e v^{i-1}_{eY} + u^{i-1}_e v^{i-1}_{ex}] \\
& \hspace{15 mm} + \{u_e^{i-1} + u^{i-1}_p(x,\infty)\} \sum_{j =1}^{i-2} \sqrt{\eps}^{j-1} v^j_{ex} + v^{i-1}_{ex} \sum_{j = 1}^{i-2} \sqrt{\eps}^{j-1} \{u^j_e + u^j_p(x,\infty) \}\\
& \hspace{15 mm} - \sqrt{\eps} \Delta v^{i-1}_e - g^{v, i}_{ext, e}, \\
&-f^{(i)} := \sqrt{\eps} u^{i-1}_{pxx} + \eps^{-\frac{1}{2}} \{ v^i_e - v^i_e(x,0) \} u^0_{py} + \eps^{-\frac{1}{2}} \{ u^0_e - u^0_e(0) \} u^{i-1}_{px} + \eps^{-\frac{1}{2}} \{ u^{P, i-1}_{sx} \\
& \hspace{15 mm} - \bar{u}^0_{sx} \} u^{i-1}_p +  \eps^{-\frac{1}{2}} \{ u^{E, i-1}_{sx}  - \bar{u}^0_{sx} \} \{u^{i-1}_p - u^{i-1}_p(x,\infty) \} + \eps^{-\frac{1}{2}} v^{i-1}_p \{ \bar{u}^{i-1}_{sy} \\
& \hspace{15 mm} - u^0_{py} \} + u^{i-1}_{px} \sum_{j =1 }^{i-1} \sqrt{\eps}^{j-1}(u^j_e + u^j_p)  + \eps^{-\frac{1}{2}} (v_s^{i-1} - v_s^1) u^{i-1}_{py} + \eps^{-\frac{1}{2}} (v^1_e \\
& \hspace{15 mm} - v^1_e(x,0)) u^{i-1}_{py} + \sqrt{\eps} u^i_{eY} \sum_{j = 0}^{i-1} \sqrt{\eps}^j v^j_p + v^i_e \sum_{j = 1}^{i-1} \sqrt{\eps}^{j-1} u^j_{py} + u^i_{ex} \sum_{j = 0}^{i-1} \sqrt{\eps}^j \{u^j_p \\
& \hspace{15 mm} - u^j_p(x,\infty) \} + u^i_e \sum_{j = 0}^{i-1} \sqrt{\eps}^j u^j_{px} + \int_y^\infty \p_x \{ \sqrt{\eps}^2 u^i_e \sum_{j = 0}^{i-1} \sqrt{\eps}^j v^j_{px} + \sqrt{\eps} v^i_{ex} \\
& \hspace{15 mm} \times \sum_{j = 0}^{i-1} \sqrt{\eps}^j \{u^j_p - u^j_p(x,\infty)\} + \sqrt{\eps}^2 v^i_{eY} \sum_{j = 0}^{i-1} \sqrt{\eps}^j v^j_p + \sqrt{\eps} v^i_e \sum_{j = 0}^{i-1} \sqrt{\eps}^j v^j_{py} \\
& \hspace{15 mm} + \sqrt{\eps} v^{i-1}_s v^{i-1}_{py} + \sqrt{\eps} v_{sy}^{i-1} v^{i-1}_p  + \sqrt{\eps} v^{E,i-1}_{sx} \{u^{i-1}_p - u^{i-1}_p(x,\infty)\} \\
& \hspace{15 mm} +  \sqrt{\eps} v^{P,i-1}_{sx} u^{i-1}_{p} + \sqrt{\eps} u_s^{i-1} v^{i-1}_{px} + \sqrt{\eps} \Delta_\eps v^{i-1}_p + \sqrt{\eps}^i \{ u^{i-1}_p v^{i-1}_{px} + v^{i-1}_p v^{i-1}_{py} \} \} \ud z \\
& \hspace{15 mm} - g^{u, i}_{ext, p} + \int_y^\infty \p_x \{ \sqrt{\eps}^2 g^{v, i}_{ext, p} \} \ud z. 
\end{align*}
\end{definition}

For $i = 1$ only, we make the following modifications. The aim is to retain only the required order $\sqrt{\eps}$ terms into $f^{(1)}$. $f^{(2)}$ will then be adjusted by including the superfluous terms. Moreover, $f^{(1)}$ will contain the important $g^{u, 1}_{ext, p}$ external forcing term. Specifically, define: 
\begin{align} 
\begin{aligned} \label{defn.f1.special}
f^{(1)} := &g^{u, 1}_{ext, p} - u^0_p u^1_{ex}|_{Y = 0} - u^0_{px} u^1_e|_{Y = 0} \\
& - \bar{u}^0_{eY}(0) y u^0_{px} - v^0_p u^0_{eY} - v^1_{eY}(0) y u^0_{py}.
\end{aligned}
\end{align} 

\subsection{$i = n$}

For the final Prandtl layer, we must enforce the boundary condition $v^n_p|_{y = 0} = 0$. Define the quantities $[u_p, v_p, P_p]$ to solve
\begin{align} \label{des.pr.1}
\left.
\begin{aligned}
&\bar{u} \p_x u_p + u_p \p_x \bar{u} + \p_y \bar{u} v_p + \bar{v} \p_y u_p + \p_x P_p - \p_{yy} u_p := f^{(n)}, \\  
& \p_x u_p + \p_y v_p = 0,  \hspace{5 mm} \p_y P^i_p = 0\\  
& [u_p, v_p]|_{y = 0} = [-u^n_e, 0]|_{y = 0}, \hspace{5 mm} u_p|_{y \rightarrow \infty} = 0 \hspace{5 mm} v_p|_{x = 0} = V_P^n. 
\end{aligned}
\right\}
\end{align}

Note the change in boundary condition of $v_{p}|_{y = 0} = 0$ which contrasts the $i = 1,..,n-1$ case. This implies that $v_p = \int_0^y u_{px} \ud y'$. For this reason, we must cut-off the Prandtl layers: 
\begin{align*}
&u^n_p := \chi(\sqrt{\eps}y) u_p + \sqrt{\eps} \chi'(\sqrt{\eps}y) \int_0^y u_p(x, y') \ud y', \\
&v^n_p := \chi(\sqrt{\eps}y) v_p. 
\end{align*}

Here $\mathcal{E}^n$ is the error contributed by the cut-off: 
\begin{align*}
\mathcal{E}^{(n)} &:= \bar{u} \p_x u^{n}_{p} + u^n_p \p_x \bar{u}  +\bar{v} \p_y u^n_{p} + v^n_p \p_y \bar{u}  - u^n_{pyy} - f^{(n)}. 
\end{align*}

Computing explicitly: 
\begin{align} \n
\mathcal{E}^{(n)} := &(1-\chi) f^{(n)} + \bar{u} \sqrt{\eps} \chi'(\sqrt{\eps}y) v_p(x,y) + \bar{u}_{x} \sqrt{\eps} \chi' \int_0^y u_p \\  \n
& + \bar{v} \sqrt{\eps} \chi' u_p + \eps \bar{v} \chi'' \int_0^y u_p + \sqrt{\eps} \chi' u_p \\ \label{dan.1}
& + \eps^{\frac{3}{2}} \chi''' \int_0^y u_p + 2\eps \chi'' u_p + \sqrt{\eps} \chi' u_{py}.
\end{align}

We will now define the contributions into the next order, which will serve as the forcing for the remainder term: 
\begin{align} \n
&\underbar{f}^{(n+1)} := \sqrt{\eps}^n \Big[ \eps u^n_{pxx} + v^n_p\{ \bar{u}^n_{sy} - u^0_{py} \} + \{u^0_e - u^0_e(0) \} u^n_{px} \\ \n
& \hspace{15 mm} + u^n_{px} \sum_{j = 1}^n  \sqrt{\eps}^j (u^j_e + u^j_p) + \{ u^n_{sx} - \bar{u}^0_{sx} \} u^n_p + (v^n_s - v^1_s) u^n_{py} \\ \n
& \hspace{15 mm} + \{ v^1_e - v^1_e(x,0) \} u^n_{py} \Big] + \sqrt{\eps}^n \mathcal{E}^{(n)} + \sqrt{\eps}^{n+2} \Delta u^n_e \\ \label{underbar.f}
& \hspace{15 mm} + \sqrt{\eps}^n u^n_{ex} \sum_{j = 1}^{n-1} \sqrt{\eps}^j u^j_e + \sqrt{\eps}^n u^n_e \sum_{j = 1}^{n-1} \sqrt{\eps}^j u^j_{ex} + \sqrt{\eps}^{2n} [ u^n_e u^n_{ex} \\ \n
& \hspace{15 mm} + v^n_e u^n_{eY}] + \sqrt{\eps}^{n+1} u^n_{eY} \sum_{j= 1}^{n-1} \sqrt{\eps}^{j-1} v^j_e + \sqrt{\eps}^{n-1}v^n_e \sum_{j = 1}^{n-1} \sqrt{\eps}^{j+1} u^j_{eY} ..
\end{align}

\begin{align} \n
& \underbar{g}^{(n+1)} := \sqrt{\eps}^n \Big[ v_s^n \p_y v^n_p + \p_y v_s^n v^n_p + \p_x v^n_s u^n_p + u^n_s \p_x v^n_p - \Delta_\eps v^n_p \\ \n
& \hspace{15 mm}  + \sqrt{\eps}^n \Big( u^n_p \p_x v^n_p + v^n_p \p_y v^n_p \Big) \Big] + (\sqrt{\eps})^n \p_Y v^n_e \sum_{j = 1}^{n-1} (\sqrt{\eps})^{j-1} v^j_e \\ \label{underbar.g}
& \hspace{15 mm}+ \sqrt{\eps}^{n-1} v^n_e \sum_{j = 1}^{i-1} \sqrt{\eps}^j \p_Y v^j_e + \sqrt{\eps}^{2n-1} [v^n_e v^n_{eY} + u^n_e \p_x v^n_e]  \\ \n
& \hspace{15 mm} + \sqrt{\eps}^n u^n_e \sum_{j =1}^{n-1} (\sqrt{\eps})^{j-1}\p_x v^j_e + \sqrt{\eps}^{n-1} \p_x v^n_e \sum_{j = 0}^{n-1} \sqrt{\eps}^j u^j_e + \sqrt{\eps}^{n+1} \Delta v^n_e.
\end{align}

\subsection{Remainder System} \label{subsection.rem}

A straightforward linearization yields:
\begin{align} \label{rem.sys.1}
\left.
\begin{aligned} 
&-\Delta_\eps u^{(\eps)} + S_u + \p_x P^{(\eps)} = \eps^{-N_0} \underbar{f}^{(n+1)} + \eps^{N_0} \Big[ u^{(\eps)}\p_x u^{(\eps)} + v^{(\eps)} \p_y u^{(\eps)} \Big] \\
&-\Delta_\eps v^{(\eps)} + S_v + \frac{\p_y}{\eps}P^{(\eps)} =\eps^{-N_0} \underbar{g}^{(n+1)} + \eps^{N_0} \Big[ u^{(\eps)} \p_x v^{(\eps)} + v^{(\eps)} \p_y v^{(\eps)} \Big] \\
&\p_x u^{(\eps)} + \p_y v^{(\eps)} = 0.
\end{aligned}
\right\}
\end{align}

Denote: 
\begin{align}
u_s := \tilde{u}^n_s, \hspace{5 mm} v_s := \tilde{v}^n_s.
\end{align}

Here we have defined: 
\begin{align} \label{Su}
&S_u = u_s \p_x u^{(\eps)} + u_{sx}u^{(\eps)} + v_s \p_y u^{(\eps)} + u_{sy}v^{(\eps)}, \\ \label{Sv}
&S_v = u_s \p_x v^{(\eps)} + v_{sx}u^{(\eps)} + v_s \p_y v^{(\eps)} + v_{sy}v^{(\eps)}.
\end{align}

We will also define: 
\begin{align} \label{forcingdefn}
&\tilde{g}_{(u)} := \eps^{-N_0} \{ \p_{y} \underbar{f}^{(n+1)} - \eps \p_{x} \underbar{g}^{(n+1)} \}, \text{ and }\tilde{g}_{(q)} := \p_x \tilde{g}_{(u)}.
\end{align}

Going to the vorticity formulation of (\ref{rem.sys.1}), we obtain the following: 
\begin{align}  
\begin{aligned} \label{eqn.vort.A}
-R[q^{(\eps)}] - u^{(\eps)}_{yyy} &+ 2\eps v^{(\eps)}_{xyy} + \eps^2 v^{(\eps)}_{xxx} + A_1 + A_2 \\
& = \tilde{g}_{(u)} + \eps^{N_0} \{ v \Delta_\eps u - u \Delta_\eps v \},
\end{aligned}
\end{align}

where we have defined the Rayleigh operator:
\begin{align}
R[q^{(\eps)}] = \p_y\{ u_s^2 \p_y q^{(\eps)}\} + \eps \p_x \{ u_s^2 q^{(\eps)}_x \},
\end{align}

and where:
\begin{align}
A_1 := v_s u^{(\eps)}_{yy} - v_{syy}u^{(\eps)}, \hspace{3 mm} A_2 := \eps v_s u^{(\eps)}_{xx} - \eps v_{sxx}u^{(\eps)}. 
\end{align}

Note for future reference that we may alternatively write: 
\begin{align*}
A_1 + A_2 = v_s \Delta_\eps u^{(\eps)} - u^{(\eps)} \Delta_\eps v_s.
\end{align*}

In Section \ref{Section.1}, our main object of analysis with the vorticity equation evaluated at the $\{x = 0\}$ boundary, $(\ref{eqn.vort.A})|_{x = 0}$, which reads: 
\begin{align}
\begin{aligned} \label{sys.u0.app.unh}
&\mathcal{L} u^{0,\eps} := - u^{0,\eps}_{yyy} + v_s u^{0,\eps}_{yy} - u^{0,\eps} \Delta_\eps v_s = F, \\
&F := -2\eps u_s u_{sx}q^{(\eps)}_x|_{x = 0} - 2\eps v^{(\eps)}_{xyy}|_{x = 0} - \eps^2 v^{(\eps)}_{xxx}|_{x = 0} - \eps v_s u^{(\eps)}_{xx}|_{x = 0} + \tilde{g}_{(u)} , \\
& u^{0,\eps}(0) = 0, \p_y u^{0,\eps}(\infty) = 0, \p_{yy} u^{0,\eps}(\infty) = 0.
\end{aligned}
\end{align}

The $x$- differentiated vorticity equation, which we refer to as DNS, $\p_x (\ref{eqn.vort.A})$, reads:
\begin{align}
\begin{aligned}  \label{eqn.dif.1.app.unh}
&-\p_x R[q^{(\eps)}] + \Delta_\eps^2 v^{(\eps)} + \p_x \{A_i \} =\eps^{N_0} \mathcal{N} + \tilde{g}_{(q)}, \\
&v^{(\eps)}|_{x = 0} = a^{(\eps)}_0,  v^{(\eps)}_{xx}|_{x = 0} = a^{(\eps)}_2, v^{(\eps)}_x|_{x = L} = a^{(\eps)}_1, v^{(\eps)}_{xxx}|_{x = L} = a^{(\eps)}_3, \\
&v^{(\eps)}|_{y = 0} = v^{(\eps)}_y|_{y = 0} = 0. 
\end{aligned}
\end{align}

\noindent It is useful to consider $\mathcal{N} = \mathcal{N}(\bar{u}^0, \bar{v})$ which is more suitable to apply a contraction mapping argument. This therefore has the expression: 
\begin{align} 
\begin{aligned} \label{defn.nonlinear}
\eps^{N_0}\mathcal{N} = & \eps^{N_0} \Big( \bar{v}_y \Delta_\eps \bar{v} + I_x[\bar{v}_y] \Delta_\eps \bar{v}_x - \bar{v}_x I_x[\bar{v}_{yyy}] \\
&- \eps \bar{v}_x \bar{v}_{xy} - \bar{v} \Delta_\eps \bar{v}_y - \bar{u}^0 \Delta_\eps \bar{v}_x + \bar{v}_x \bar{u}^0_{yy}\Big).
\end{aligned}
\end{align}

The forcing $g_{(q)}$ has been defined above in (\ref{forcingdefn}). The term $b_{(q)}$ arises as a result of homogenizing the boundary conditions. Define the boundary corrector 
\begin{align*}
\tilde{v} := a^\eps_0 + x\{a^\eps_1 - L a^\eps_2 - \frac{L^2}{2}a^\eps_3  \} + x^2 \frac{a^\eps_2}{2} + x^3 \frac{a^\eps_3}{6}.
\end{align*}

\noindent   One checks immediately that $\tilde{v}$ achieves the boundary conditions in (\ref{eqn.dif.1.app.unh}). We homogenize at the level of the vorticity equation, (\ref{eqn.vort.A}). Define the homogenized quantities:
\begin{align} \label{defn.h}
v := v^{(\eps)} - \tilde{v}, \hspace{3 mm} u := u^{(\eps)} + \int_0^x \tilde{v}_y, \hspace{3 mm} u_s q := v.
\end{align}

\noindent It is clear that the divergence free condition is satisfied by the pair $[u, v]$. Writing (\ref{eqn.vort.A}) gives: 
\begin{align} 
\begin{aligned} \label{h.vort}
- R[q] - u_{yyy} &+ 2\eps v_{xyy} + \eps^2 v_{xxx} + \{v_s + \eps^{N_0} \tilde{v}\} \Delta_\eps u \\ 
& - u \Delta_\eps \{v_s + \eps^{N_0} \tilde{v} \} - \eps^{N_0} I_x[\tilde{v}_y] \Delta_\eps v + \eps^{N_0} v \Delta_\eps I_x[\tilde{v}_y] \\
& = g_{(u)} + b_{(u)} + \eps^{N_0} \{ \p_y N_u(u, v)  - \eps \p_x N_v(u, v) \}, 
\end{aligned}
\end{align}

\noindent where:
\begin{align*}
&b_{(u)} = - R[\tilde{v}] + I_x[\tilde{v}_{yyyy}] + 2\eps \tilde{v}_{xyy} + \eps^2 \tilde{v}_{xxx} - \eps \tilde{v}_{xy} + v_s I_x[\tilde{v}_{yyy}] - \Delta_\eps v_s I_x[\tilde{v}_y] , \\
&b_{(q)} = \p_x b_{(u)}.
\end{align*}

\noindent We then solve for $[u, v]$ using the equations (\ref{eqn.dif.1.app}) and (\ref{sys.u0.app}). By definition, the pair $[u := u^{0} - \int_0^x v_y, v]$ solves the vorticity equation, (\ref{h.vort}). We then obtain $[u^{(\eps)}, v^{(\eps)}]$ using (\ref{defn.h}). We now summarize the two systems we will be studying. First the $\mathcal{L}$-system:
\begin{align}
\begin{aligned} \label{sys.u0.app}
&\mathcal{L} u^0 := - u^{0}_{yyy} + v_s u^{0}_{yy} - u^{0} \Delta_\eps v_s = F, \\
&F := \underbrace{-2\eps u_s u_{sx}q_x|_{x = 0} - 2\eps v_{xyy}|_{x = 0} - \eps^2 v_{xxx}|_{x = 0} - \eps v_s u_{xx}|_{x = 0}}_{F_{(v)}} + g_{(u)}, \\
& u^{0,\eps}(0) = 0, \p_y u^{0,\eps}(\infty) = 0, \p_{yy} u^{0,\eps}(\infty) = 0.
\end{aligned}
\end{align}

\noindent and 
\begin{align}
\begin{aligned}  \label{eqn.dif.1.app}
&-\p_x R[q] + \Delta_\eps^2 v+ \p_x \{A_i \} = \p_x N + g_{(q)}, \\
&v|_{x = 0} = 0,  v_{xx}|_{x = 0} = 0, v_x|_{x = L} = 0, v_{xxx}|_{x = L} = 0, \\
&v|_{y = 0} = v_y|_{y = 0} = 0. 
\end{aligned}
\end{align}

Here, we take as forcing: 
\begin{align}
&g_{(u)} := \tilde{g}_{(u)} + b_{(u)} + H_{(u)}, \hspace{5 mm} g_{(q)} := \tilde{g}_{(q)} + b_{(q)} + H_{(q)}.
\end{align}

Finally, we define the quantities $H_{(u)}$ and $H_{(q)}$ that appear above: 
\begin{align}
\begin{aligned} \label{defn.H}
&H_{(u)} := u \Delta_\eps \eps^{N_0} \tilde{v} - \eps^{N_0} \tilde{v} \Delta_\eps u + \eps^{N_0} I_x[\tilde{v}_y] \Delta_\eps v - \eps^{N_0} v \Delta_\eps I_x[\tilde{v}_y], \\
&H_{(q)} = \p_x H_{(u)}.
\end{aligned}
\end{align}

Recall the definition of $\mathcal{F}$ from (\ref{defn.forcing.f}). The following estimates are clear that according to the assumption, (\ref{assume.bq.intro}): 
\begin{align} 
\begin{aligned}\label{bqbu}
&\mathcal{F}(b_{(q)}, b_{(u)}) \le o(1), \\
&\mathcal{F}(H_{(q)}, H_{(u)}) \le o(1) \| \bold{u} \|_{\mathcal{X}}^2.
\end{aligned}
\end{align}

The following proposition summarizes the profile constructions from \cite{}: 
\begin{theorem} \label{thm.construct} Assume the shear flow $u^0_e(Y) \in C^\infty$, whose derivatives decay rapidly. Assume (\ref{OL.1}) regarding $\bar{u}^0_p|_{x = 0}$, and the conditions
\begin{align}  \label{compatibility.1.fin}
& \bar{v}^i_{pyyy}|_{x = 0}(0) = \p_x g_1|_{x = 0, y = 0}, \\ \label{compatibility.2.fin}
&\bar{v}^i_p|_{x = 0}''''(0) = \p_{xy}g_1|_{y =0}(x = 0), \\ \label{integral.cond}
&\bar{u}^0_{p y}|_{x = 0}(0) u^{i}_e|_{x = 0}(0) - \int_0^\infty \bar{u}^0_{p} e^{-\int_1^y \bar{v}^0_{p}} \{f^{(i)}(y) - r^{(i)}(y) \} \ud y  = 0,
\end{align}

\noindent where $r^{(i)}(y) := \bar{v}^i_p \bar{u}^0_{p y} - \bar{u}^0_{p} \bar{v}^i_{py}$. We assume also standard higher order versions of the parabolic compatibility conditions (\ref{compatibility.1.fin}), (\ref{compatibility.2.fin}). Let $v^i_e|_{x = 0}, v^i_e|_{x = L}, u^i_e|_{x = 0}$  be prescribed smooth and rapidly decaying Euler data. We assume on the data standard elliptic  compatibility conditions at the corners $(0,0)$ and $(L,0)$ obtained by evaluating the equation at the corners. In addition, assume 
\begin{align}
&v^1_e|_{x = 0} \sim Y^{-m_1} \text{ or } e^{-m_1 Y} \text{ for some } 0 < m_1 < \infty,\\
& \|\p_Y^k \{ v^i_e|_{x = 0} - v^i_e|_{x = L} \} \langle Y \rangle^M\|_\infty \lesssim L
\end{align}

\noindent Then all profiles in $[u_s, v_s]$ exist and are smooth on $\Omega$. The following estimates hold: 
\begin{align}
\begin{aligned} \label{prof.pick}
&\bar{u}^0_p > 0,\bar{u}^0_{py}|_{y = 0} > 0, \bar{u}^0_{p yy}|_{y = 0} = \bar{u}^0_{p yyy}|_{y = 0} = 0 \\
&\| \nabla^K \{ u^0_p, v^0_p\} e^{My} \|_\infty \lesssim 1 \text{ for any } K \ge 0, \\
&\|u^i_p \|_\infty + \| \nabla^K  u^i_p  e^{My} \|_\infty + \| \nabla^J v^i_p e^{My} \|_\infty \lesssim 1 \text{ for any } K \ge 1, M \ge 0, \\
&\| \nabla^K \{u^1_e, v^1_e\}  w_{m_1} \|_\infty \lesssim 1 \text{ for some fixed } m_1 > 1  \\
&\| \nabla^K \{u^i_e, v^i_e\} w_{m_i}\|_\infty \lesssim 1 \text{ for some fixed } m_i > 1,
\end{aligned}
\end{align}

\noindent where $w_{m_i} \sim e^{m_i Y}$ or $(1+Y)^{m_i}$. 

In addition the following estimate on the remainder forcing (recall (\ref{defn.forcing.f})) holds: 
\begin{align} \label{thm.force.maz}
\mathcal{F}  \lesssim \sqrt{\eps}^{n-1-2N_0}.
\end{align}
\end{theorem}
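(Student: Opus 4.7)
The plan is to construct the profiles iteratively in powers of $\sqrt{\eps}$, alternating between (linearized) steady Euler systems for $[u^i_e, v^i_e]$ and (linearized) Prandtl systems for $[u^i_p, v^i_p]$, following the companion paper \cite{GI1}. The base case $i=0$ is handled directly by Oleinik's Theorem \ref{thm.Oleinik}: the hypothesis (\ref{OL.1}) gives a smooth solution $[\bar{u}^0_p, \bar{v}^0_p]$ to (\ref{Pr.leading}) on $(0,L)\times \mathbb{R}_+$ with $L$ small, and positivity $\bar{u}^0_{py}|_{y=0}>0$. The boundary identities $\bar{u}^0_{pyy}|_{y=0} = \bar{u}^0_{pyyy}|_{y=0}=0$ from (\ref{prof.pick}) are read off by evaluating (\ref{Pr.leading}) and its $y$-derivative at $\{y=0\}$ using the no-slip condition $u^0_p|_{y=0}=-u^0_e(0)$ and mass conservation. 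Gaussian decay of $\bar{u}^0_p - u^0_e(0)$ and its derivatives is standard and is propagated by weighted energy estimates exploiting $\bar{u}^0_p > 0$.

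For $1\le i \le n-1$, I would construct the Euler corrector $[u^i_e, v^i_e]$ first: eliminating $P^i_e$ from (\ref{des.eul.1}) yields an elliptic boundary value problem for $v^i_e$ whose right-hand sides $f^i_{E,\cdot}$ depend only on the already-constructed profiles of order $<i$. The assumed corner-compatibility conditions at $(0,0)$ and $(L,0)$, together with standard $H^k$ elliptic regularity, produce smooth solutions; weighted estimates propagate the decay rate $m_i$ from the prescribed data. The Prandtl corrector $[u^i_p,v^i_p]$ then solves the linearized Prandtl equation around $[\bar{u}^0_p, \bar{v}^0_p]$ with forcing $f^{(i)}$, a degenerate parabolic problem in which $\bar{u}^0_p$ plays the role of a time variable. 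Its well-posedness on weighted spaces with exponential $y$-decay requires exactly the parabolic compatibility conditions (\ref{compatibility.1.fin})--(\ref{compatibility.2.fin}) at the corner and the global solvability relation (\ref{integral.cond}), which is what fixes the trace $u^i_e|_{x=0}(0)$ and links successive Euler-Prandtl layers. Decay and regularity come from an energy method in a weighted von Mises-like variable, of the sort used in \cite{Oleinik}.

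For $i=n$, the final Prandtl layer is constructed identically to the previous step except that we enforce $v_p|_{y=0}=0$; the horizontal velocity is then not compatible with the prescribed no-slip of $U^\eps$, so one applies the cut-off $\chi(\sqrt{\eps}y)$ as defined above, producing $[u^n_p, v^n_p]$ with support in $\{y \lesssim \eps^{-1/2}\}$. The commutator error $\mathcal{E}^{(n)}$ in (\ref{dan.1}) is localized to $\sqrt{\eps}y\sim 1$ and is therefore exponentially small because each factor $u_p, v_p, u_{py}$ evaluated at $y\sim \eps^{-1/2}$ inherits Gaussian decay from the $i=0$ layer.

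For the forcing bound (\ref{thm.force.maz}), I would simply inspect (\ref{underbar.f})--(\ref{underbar.g}) term by term. Every surviving term in $\underbar{f}^{(n+1)}$, $\underbar{g}^{(n+1)}$ has an explicit prefactor of at least $\sqrt{\eps}^{\,n}$, divided by $\eps^{N_0}$ from the rescaling in (\ref{forcingdefn}), and each factor in the product is already controlled in the weighted $L^\infty$ or $L^2$ norms entering $\mathcal{F}$ by (\ref{prof.pick}). Plugging the bounds (\ref{prof.pick}) into the explicit form of each summand of $\mathcal{F}$ gives the stated $\sqrt{\eps}^{\,n-1-2N_0}$; the cut-off error $\mathcal{E}^{(n)}$ contributes at strictly higher order because of its Gaussian smallness on its support. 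The main obstacle in carrying this out cleanly is the simultaneous verification of the integral compatibility (\ref{integral.cond}) at each stage of the iteration and the bookkeeping needed to propagate the exact decay rates $m_i$ through the coupled Euler-Prandtl system without losing derivatives at corners.
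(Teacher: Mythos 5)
There is nothing in this paper to compare your argument against: Theorem \ref{thm.construct} is not proven here, it is quoted as a black box from the companion paper \cite{GI1} (see the sentence introducing it in the appendix and the remark in the introduction). Your outline does follow the same broad strategy as that construction: Oleinik's theorem for the $i=0$ layer, alternating linearized Euler (elliptic, with corner compatibility) and linearized Prandtl (degenerate parabolic, with the parabolic compatibility conditions and the solvability relation (\ref{integral.cond}) linking $u^i_e|_{x=0}(0)$ to the forcing), a cut-off for the last Prandtl layer, and a direct estimate of $\underbar{f}^{(n+1)}, \underbar{g}^{(n+1)}$ for the forcing bound. At that level of generality the plan is sound.

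However, one concrete step in your sketch is wrong. You claim the cut-off error $\mathcal{E}^{(n)}$ is exponentially small because ``each factor $u_p, v_p, u_{py}$ evaluated at $y\sim \eps^{-1/2}$ inherits Gaussian decay.'' But $v_p=\int_0^y u_{px}\,\ud y'$ does \emph{not} decay in $y$ (it tends to a generically nonzero limit); that non-decay is precisely why the cut-off is introduced in the first place, and also why your stated motivation for the cut-off (incompatibility with no-slip) is off the mark. Hence the terms $\bar{u}\,\sqrt{\eps}\,\chi'(\sqrt{\eps}y)\,v_p$ and $\bar{u}_x\sqrt{\eps}\chi'\int_0^y u_p$ in (\ref{dan.1}) are only $O(\sqrt{\eps})$ on the support $y\sim\eps^{-1/2}$, not Gaussian-small; only $(1-\chi)f^{(n)}$ and the terms carrying $u_p$ or $u_{py}$ themselves are exponentially small there. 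Consequently $\mathcal{E}^{(n)}$ enters $\underbar{f}^{(n+1)}$ at a fixed polynomial order and must be carried through the bookkeeping rather than discarded as ``strictly higher order.'' For the same reason the final bound (\ref{thm.force.maz}) is not a pure prefactor count: $\mathcal{F}$ is quadratic in the forcing, carries the growing weight $w_0$ and negative powers of $\eps$, and the weighted norms of non-decaying-in-$y$ pieces over a region of thickness $\eps^{-1/2}$ are exactly what produce the exponent $n-1-2N_0$; this support-and-weight accounting is the part of the argument your term-by-term inspection glosses over.
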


\noindent \textbf{Acknowledgements:} This research is supported in part by NSF grants DMS-1611695, DMS-1810868, Chinese
NSF grant 10828103, BICMR, as well as a Simon Fellowship. Sameer Iyer was also supported in part by NSF grant DMS 1802940.

\def\bibindent{3.5em}

\end{document}